 \def\setlabelmargin#1{\labelmargin@=#1\relax }
\newcommand{\ot}{{\otimes}}
\newcommand{\cZ}{{\cal Z}}
\newcommand{\cC}{{\cal C}}
\newcommand{\caD}{{\cal D}}
\newcommand{\C}{{\bf C}}
\newcommand{\g}{{\frak g}}
\newcommand{\Z}{{\bf Z}}
\newcommand{\id}{\operatorname{id}}
\newcommand{\idI}{\operatorname{I}}
\newcommand{\idIs}{\textrm{\tiny{$\idI$}}}
\newcommand{\modx}{\operatorname{X}}
\newcommand{\modxs}{\textrm{\tiny{$\modx$}}}
\newcommand{\modm}{\operatorname{m}}
\newcommand{\modms}{\textrm{\tiny{$\modm$}}}
\newcommand{\modn}{\operatorname{n}}
\newcommand{\modns}{\textrm{\tiny{$\modn$}}}
\newcommand{\bm}{{\cal{M}}}
\newcommand{\bc}{{\cal{C}}}
\newcommand{\V}{{\cal{V}}}
\newcommand{\U}{{\cal{U}}}
\newcommand{\W}{{\cal{W}}}
\newcommand{\M}{{\cal{M}}}
\newcommand{\fibfrmod}{\operatorname{Fib}}
\newcommand{\fibfr}{\textrm{$\fibfrmod$}}
\newcommand{\fib}{\mathcal{F}ib}
\newcommand{\Vir}{{\cal V}{\it ir}}
\newcommand{\Vect}{{\cal V}{\it ect}}
\newcommand{\Rep}{{\cal R}{\it ep}}
\newcommand{\ev}{\cal{V}}
\newcommand{\mmapcs}[4]{\ar@/#2/@<#4ex>[#1]|-*=0{\rotatebox{#3}{\tiny |}}} 
\newcommand{\sumo}{{\sum'}}
\DeclareMathOperator{\osum}{\sumo}
\newtheorem{theorem}{Theorem}
\newtheorem{lemma}[theorem]{Lemma}
\newtheorem{proposition}[theorem]{Proposition}
\newtheorem{corollary}[theorem]{Corollary}
\newenvironment{example}[1][Example]{\begin{trivlist}
\item[\hskip \labelsep {\bfseries #1}]}{\end{trivlist}}
\newenvironment{remark}[1][Remark]{\begin{trivlist}
\item[\hskip \labelsep {\bfseries #1}]}{\end{trivlist}}
\begin{document}

\newbox\Treetwo
\setbox\Treetwo =\hbox{\xygraph{ !{0;/r1.0pc/:;/u1.0pc/::}[]
   -[u]
  ( -[ul]
    ,-[ur]
  )}}
\newcommand*{\TreeTwo}{\copy\Treetwo}

\newbox\Treetwoid
\setbox\Treetwoid =\hbox{\xygraph{ !{0;/r1.0pc/:;/u1.0pc/::}[]
   -[u]_*!LD{\idIs}
  ( -[ul]
    ,-[ur]
  )}}
\newcommand*{\TreeTwoid}{\copy\Treetwoid}

\newbox\TreethreeA
\setbox \TreethreeA =\hbox{\xygraph{ !{0;/r1.0pc/:;/u1.0pc/::}[]
   -[u]
  ( -[ul]
  ( -[ul]
  )
    ,-[ur]
  ( -[ur]
  ,-[ul]
  )
  )}}
\newcommand*{\TreeThreeA}{\copy \TreethreeA}

\newbox\TreethreeB
\setbox \TreethreeB =\hbox{\xygraph{ !{0;/r1.0pc/:;/u1.0pc/::}[]
   -[u]
  ( -[ul]
  ( -[ul]
  )
    ,-[ur]_*!LU{\idIs}
  ( -[ur]
  ,-[ul]
  )
  )}}
\newcommand*{\TreeThreeB}{\copy \TreethreeB}

\newbox\TreethreeArev
\setbox \TreethreeArev =\hbox{\xygraph{ !{0;/r1.0pc/:;/u1.0pc/::}[]
   -[u]
  ( -[ul]
  ( -[ul]
  ,-[ur]
  )
    ,-[ur]
  ( -[ur]
  )
  )}}
\newcommand*{\TreeThreeArev}{\copy \TreethreeArev}

\newbox\TreethreeBrev
\setbox \TreethreeBrev =\hbox{\xygraph{ !{0;/r1.0pc/:;/u1.0pc/::}[]
   -[u]
  ( -[ul]^*!RU{\idIs}
  ( -[ul]
  ,-[ur]
  )
    ,-[ur]
  ( -[ur]
  )
  )}}
\newcommand*{\TreeThreeBrev}{\copy \TreethreeBrev}

\newbox\Treethreeid
\setbox \Treethreeid =\hbox{\xygraph{ !{0;/r1.0pc/:;/u1.0pc/::}[]
     -[u]_*!LD{\idIs}
  ( -[ul]
  ( -[ul]
  )
    ,-[ur]
  ( -[ur]
  ,-[ul]
  )
  )}}
\newcommand*{\TreeThreeid}{\copy \Treethreeid}

\newbox\Treethreeidrev
\setbox \Treethreeidrev =\hbox{\xygraph{ !{0;/r1.0pc/:;/u1.0pc/::}[]
     -[u]_*!LD{\idIs}
 (-[ul]
 (-[ul]
 ,-[ur]
 )
 ,-[ur]
 (
 -[ur]
 )
 )}}
\newcommand*{\TreeThreeidrev}{\copy \Treethreeidrev}

\newbox\TreethreeAs
\setbox \TreethreeAs =\hbox{\xygraph{ !{0;/r0.5pc/:;/u0.5pc/::}[]
   -[u]
  ( -[ul]
  ( -[ul]
  )
    ,-[ur]
  ( -[ur]
  ,-[ul]
  )
  )}}
\newcommand*{\TreeThreeAs}{\copy \TreethreeAs}

\newbox\TreethreeBs
\setbox \TreethreeBs =\hbox{\xygraph{ !{0;/r0.5pc/:;/u0.5pc/::}[]
   -[u]
  ( -[ul]
  ( -[ul]
  )
    ,-[ur]_*!LU{\idIs}
  ( -[ur]
  ,-[ul]
  )
  )}}
\newcommand*{\TreeThreeBs}{\copy \TreethreeBs}

\newbox\TreethreeArevs
\setbox \TreethreeArevs =\hbox{\xygraph{ !{0;/r0.5pc/:;/u0.5pc/::}[]
   -[u]
  ( -[ul]
  ( -[ul]
  ,-[ur]
  )
    ,-[ur]
  ( -[ur]
  )
  )}}
\newcommand*{\TreeThreeArevs}{\copy \TreethreeArevs}

\newbox\TreethreeBrevs
\setbox \TreethreeBrevs =\hbox{\xygraph{ !{0;/r0.5pc/:;/u0.5pc/::}[]
   -[u]
  ( -[ul]^*!RU{\idIs}
  ( -[ul]
  ,-[ur]
  )
    ,-[ur]
  ( -[ur]
  )
  )}}
\newcommand*{\TreeThreeBrevs}{\copy \TreethreeBrevs}

\newbox\Treethreeids
\setbox \Treethreeids =\hbox{\xygraph{ !{0;/r0.5pc/:;/u0.5pc/::}[]
     -[u]_*!LD{\idIs}
  ( -[ul]
  ( -[ul]
  )
    ,-[ur]
  ( -[ur]
  ,-[ul]
  )
  )}}
\newcommand*{\TreeThreeids}{\copy \Treethreeids}

\newbox\Treethreeidrevs
\setbox \Treethreeidrevs =\hbox{\xygraph{ !{0;/r0.5pc/:;/u0.5pc/::}[]
     -[u]_*!LD{\idIs}
 (-[ul]
 (-[ul]
 ,-[ur]
 )
 ,-[ur]
 (
 -[ur]
 )
 )}}
\newcommand*{\TreeThreeidrevs}{\copy \Treethreeidrevs}


\newbox\TreefouridAtopa
\setbox \TreefouridAtopa =\hbox{\setlabelmargin{2pt}
\xygraph{ !{0;/r0.5pc/:;/u0.5pc/::}[]
     -[u]_*+!DL(.5){\idIs}
(-[ul]
(-[ul] 
(-[ul]
)
)
,-[ur]
(-[ur]
(-[ul]
)
(-[ur]
)
,-[ul]
(-[ul]
)
)
)
}}
\newcommand*{\TreeFouridAtopa}{\copy \TreefouridAtopa}

\newbox\TreefouridAtopaid
\setbox \TreefouridAtopaid =\hbox{\setlabelmargin{2pt}
\xygraph{ !{0;/r0.5pc/:;/u0.5pc/::}[]
     -[u]_*+!DL(.5){\idIs}
(-[ul]
(-[ul] 
(-[ul]
)
)
,-[ur]
(-[ur]_*+!UL(0.9){\idIs}
(-[ul]
)
(-[ur]
)
,-[ul]
(-[ul]
)
)
)
}}
\newcommand*{\TreeFouridAtopaid}{\copy \TreefouridAtopaid}

\newbox\TreefouridAtopb
\setbox \TreefouridAtopb =\hbox{\setlabelmargin{2pt}
\xygraph{ !{0;/r0.5pc/:;/u0.5pc/::}[]
     -[u]_*+!DL(.5){\idIs}
(-[ul]
(-[ul]
(-[ul]
,-[ur]
)
)
,-[ur]
(-[ur]
(-[ur]
,-[ul]
)
)
)
}}
\newcommand*{\TreeFouridAtopb}{\copy \TreefouridAtopb}

\newbox\TreefouridAtopbid
\setbox \TreefouridAtopbid =\hbox{\setlabelmargin{2pt}
\xygraph{ !{0;/r0.5pc/:;/u0.5pc/::}[]
     -[u]_*+!DL(.5){\idIs}
(-[ul]
(-[ul]^*+!UR(.9){\idIs}
(-[ul]
,-[ur]
)
)
,-[ur]
(-[ur]_*+!UL(.9){\idIs}
(-[ur]
,-[ul]
)
)
)
}}
\newcommand*{\TreeFouridAtopbid}{\copy \TreefouridAtopbid}

\newbox\TreefouridAtopc
\setbox \TreefouridAtopc =\hbox{\setlabelmargin{2pt}
\xygraph{ !{0;/r0.5pc/:;/u0.5pc/::}[]
     -[u]_*+!DL(.5){\idIs}
(-[ul]
(-[ul]
(-[ur]
,-[ul]
)
,-[ur]
(-[ur]
)
)
,-[ur]
(-[ur]
(-[ur]
)
)
)  
}}
\newcommand*{\TreeFouridAtopc}{\copy \TreefouridAtopc}

\newbox\TreefouridAtopcid
\setbox \TreefouridAtopcid =\hbox{\setlabelmargin{2pt}
\xygraph{ !{0;/r0.5pc/:;/u0.5pc/::}[]
     -[u]_*+!DL(.5){\idIs}
(-[ul]
(-[ul]^*!RU{\idIs}
(-[ur]
,-[ul]
)
,-[ur]
(-[ur]
)
)
,-[ur]
(-[ur]
(-[ur]
)
)
)  
}}
\newcommand*{\TreeFouridAtopcid}{\copy \TreefouridAtopcid}

\newbox\TreefouridAbota
\setbox \TreefouridAbota =\hbox{\setlabelmargin{2pt}
\xygraph{ !{0;/r0.5pc/:;/u0.5pc/::}[]
     -[u]_*+!DL(.5){\idIs}
(-[ul]
(-[ul]
(-[ul]
)
)
,-[ur]
(-[ur]
(-[ur]
)
,-[ul]
(-[ul]
,-[ur]
)
)
) 
}}
\newcommand*{\TreeFouridAbota}{\copy \TreefouridAbota}

\newbox\TreefouridAbotaid
\setbox \TreefouridAbotaid =\hbox{\setlabelmargin{2pt}
\xygraph{ !{0;/r0.5pc/:;/u0.5pc/::}[]
     -[u]_*+!DL(.5){\idIs}
(-[ul]
(-[ul]
(-[ul]
)
)
,-[ur]
(-[ur]
(-[ur]
)
,-[ul]^*!RU{\idIs}
(-[ul]
,-[ur]
)
)
) 
}}
\newcommand*{\TreeFouridAbotaid}{\copy \TreefouridAbotaid}

\newbox\TreefouridAbotb
\setbox \TreefouridAbotb =\hbox{\setlabelmargin{2pt}
\xygraph{ !{0;/r0.5pc/:;/u0.5pc/::}[]
     -[u]_*+!DL(.5){\idIs}
(-[ul]
(-[ul]
(-[ul]
)
,-[ur]
(-[ur]
,-[ul]
)
)
,-[ur]
(-[ur]
(-[ur]
)
)
)
}}
\newcommand*{\TreeFouridAbotb}{\copy \TreefouridAbotb}

\newbox\TreefouridAbotbid
\setbox \TreefouridAbotbid =\hbox{\setlabelmargin{2pt}
 \xygraph{ !{0;/r0.5pc/:;/u0.5pc/::}[]
     -[u]_*+!DL(.5){\idIs}
(-[ul]
(-[ul]
(-[ul]
)
,-[ur]_*!LU{\idIs}
(-[ur]
,-[ul]
)
)
,-[ur]
(-[ur]
(-[ur]
)
)
)
}}
\newcommand*{\TreeFouridAbotbid}{\copy \TreefouridAbotbid}

\newbox\TreefouridBtopa
\setbox \TreefouridBtopa =\hbox{\setlabelmargin{2pt}
\xygraph{ !{0;/r0.5pc/:;/u0.5pc/::}[]
     -[u]
(-[ul]
(-[ul] 
(-[ul]
)
)
,-[ur]
(-[ur]
(-[ul]
)
(-[ur]
)
,-[ul]
(-[ul]
)
)
)
}}
\newcommand*{\TreeFouridBtopa}{\copy \TreefouridBtopa}

\newbox\TreefouridBtopaid
\setbox \TreefouridBtopaid =\hbox{\setlabelmargin{2pt}
\xygraph{ !{0;/r0.5pc/:;/u0.5pc/::}[]
     -[u]
(-[ul]
(-[ul] 
(-[ul]
)
)
,-[ur]
(-[ur]_*!LU{\idIs}
(-[ul]
)
(-[ur]
)
,-[ul]
(-[ul]
)
)
)
}}
\newcommand*{\TreeFouridBtopaid}{\copy \TreefouridBtopaid}

\newbox\TreefouridBtopaidid
\setbox \TreefouridBtopaidid =\hbox{\setlabelmargin{2pt}
\xygraph{ !{0;/r0.5pc/:;/u0.5pc/::}[]
     -[u]
(-[ul]
(-[ul] 
(-[ul]
)
)
,-[ur]_*!LU{\idIs}
(-[ur]
(-[ul]
)
(-[ur]
)
,-[ul]
(-[ul]
)
)
)
}}
\newcommand*{\TreeFouridBtopaidid}{\copy \TreefouridBtopaidid}

\newbox\TreefouridBbota
\setbox \TreefouridBbota =\hbox{\setlabelmargin{2pt}
\xygraph{ !{0;/r0.5pc/:;/u0.5pc/::}[]
     -[u]
(-[ul]
(-[ul]
(-[ul]
)
)
,-[ur]
(-[ur]
(-[ur]
)
,-[ul]
(-[ul]
,-[ur]
)
)
) 
}}
\newcommand*{\TreeFouridBbota}{\copy \TreefouridBbota}

\newbox\TreefouridBbotaid
\setbox \TreefouridBbotaid =\hbox{\setlabelmargin{2pt}
\xygraph{ !{0;/r0.5pc/:;/u0.5pc/::}[]
     -[u]
(-[ul]
(-[ul]
(-[ul]
)
)
,-[ur]
(-[ur]
(-[ur]
)
,-[ul]^*!RU{\idIs}
(-[ul]
,-[ur]
)
)
) 
}}
\newcommand*{\TreeFouridBbotaid}{\copy \TreefouridBbotaid}

\newbox\TreefouridBtopb
\setbox \TreefouridBtopb =\hbox{\setlabelmargin{2pt}
\xygraph{ !{0;/r0.5pc/:;/u0.5pc/::}[]
     -[u]
(-[ul]
(-[ul]
(-[ul]
,-[ur]
)
)
,-[ur]
(-[ur]
(-[ur]
,-[ul]
)
)
)
}}
\newcommand*{\TreeFouridBtopb}{\copy \TreefouridBtopb}

\newbox\TreefouridBtopbid
\setbox \TreefouridBtopbid =\hbox{\setlabelmargin{2pt}
\xygraph{ !{0;/r0.5pc/:;/u0.5pc/::}[]
     -[u]
(-[ul]
(-[ul]^*+!UR(.9){\idIs}
(-[ul]
,-[ur]
)
)
,-[ur]
(-[ur]
(-[ur]
,-[ul]
)
)
)
}}
\newcommand*{\TreeFouridBtopbid}{\copy \TreefouridBtopbid}

\newbox\TreefouridBtopbidid
\setbox \TreefouridBtopbidid =\hbox{\setlabelmargin{2pt}
\xygraph{ !{0;/r0.5pc/:;/u0.5pc/::}[]
     -[u]
(-[ul]
(-[ul]
(-[ul]
,-[ur]
)
)
,-[ur]
(-[ur]_*+!UL(.9){\idIs}
(-[ur]
,-[ul]
)
)
)
}}
\newcommand*{\TreeFouridBtopbidid}{\copy \TreefouridBtopbidid}

\newbox\TreefouridBbotb
\setbox \TreefouridBbotb =\hbox{\setlabelmargin{2pt}
\xygraph{ !{0;/r0.5pc/:;/u0.5pc/::}[]
     -[u]
(-[ul]
(-[ul]
(-[ul]
)
,-[ur]
(-[ur]
,-[ul]
)
)
,-[ur]
(-[ur]
(-[ur]
)
)
)
}}
\newcommand*{\TreeFouridBbotb}{\copy \TreefouridBbotb}

\newbox\TreefouridBbotbid
\setbox \TreefouridBbotbid =\hbox{\setlabelmargin{2pt}
 \xygraph{ !{0;/r0.5pc/:;/u0.5pc/::}[]
     -[u]
(-[ul]
(-[ul]
(-[ul]
)
,-[ur]_*!LU{\idIs}
(-[ur]
,-[ul]
)
)
,-[ur]
(-[ur]
(-[ur]
)
)
)
}}
\newcommand*{\TreeFouridBbotbid}{\copy \TreefouridBbotbid}

\newbox\TreefouridBbotbidid
\setbox \TreefouridBbotbidid =\hbox{\setlabelmargin{2pt}
 \xygraph{ !{0;/r0.5pc/:;/u0.5pc/::}[]
     -[u]
(-[ul]^*!UR{\idIs}
(-[ul]
(-[ul]
)
,-[ur]
(-[ur]
,-[ul]
)
)
,-[ur]
(-[ur]
(-[ur]
)
)
)
}}
\newcommand*{\TreeFouridBbotbidid}{\copy \TreefouridBbotbidid}

\newbox\TreefouridBtopc
\setbox \TreefouridBtopc =\hbox{\setlabelmargin{2pt}
\xygraph{ !{0;/r0.5pc/:;/u0.5pc/::}[]
     -[u]
(-[ul]
(-[ul]
(-[ur]
,-[ul]
)
,-[ur]
(-[ur]
)
)
,-[ur]
(-[ur]
(-[ur]
)
)
)  
}}
\newcommand*{\TreeFouridBtopc}{\copy \TreefouridBtopc}

\newbox\TreefouridBtopcid
\setbox \TreefouridBtopcid =\hbox{\setlabelmargin{2pt}
\xygraph{ !{0;/r0.5pc/:;/u0.5pc/::}[]
     -[u]
(-[ul]
(-[ul]^*!RU{\idIs}
(-[ur]
,-[ul]
)
,-[ur]
(-[ur]
)
)
,-[ur]
(-[ur]
(-[ur]
)
)
)  
}}
\newcommand*{\TreeFouridBtopcid}{\copy \TreefouridBtopcid}

\newbox\TreefouridBtopcidb
\setbox \TreefouridBtopcidb =\hbox{\setlabelmargin{2pt}
\xygraph{ !{0;/r0.5pc/:;/u0.5pc/::}[]
     -[u]
(-[ul]^*!RU{\idIs}
(-[ul]
(-[ur]
,-[ul]
)
,-[ur]
(-[ur]
)
)
,-[ur]
(-[ur]
(-[ur]
)
)
)  
}}
\newcommand*{\TreeFouridBtopcidb}{\copy \TreefouridBtopcidb}

\newbox\modactmn
\setbox \modactmn =\hbox{\setlabelmargin{2pt}
\xygraph{ !{0;/r1pc/:;/u1pc/::}[]
     -[u]_<*!DL(.5){\modns}
(-[ur]_>*!UL(.5){\modms}
)
(-[ul]^>*!RU(.5){\modxs}
)
}}
\newcommand*{\Modactmn}{\copy \modactmn}

\newbox\modactnn
\setbox \modactnn =\hbox{\setlabelmargin{2pt}
\xygraph{ !{0;/r1pc/:;/u1pc/::}[]
     -[u]_<*!DL(.5){\modns}
(-[ur]_>*!UL(.5){\modns}
)
(-[ul]^>*!RU(.5){\modxs}
)
}}
\newcommand*{\Modactnn}{\copy \modactnn}

\newbox\modactnm
\setbox \modactnm =\hbox{\setlabelmargin{2pt}
\xygraph{ !{0;/r1pc/:;/u1pc/::}[]
     -[u]_<*!DL(.5){\modms}
(-[ur]_>*!UL(.5){\modns}
)
(-[ul]^>*!RU(.5){\modxs}
)
}}
\newcommand*{\Modactnm}{\copy \modactnm}

\newbox\modassocmnm
\setbox \modassocmnm =\hbox{\xygraph{ !{0;/r1.0pc/:;/u1.0pc/::}[]
   -[u]_<*!DL(.5){\modms}
(-[ul]
(-[ul]^>*!RU{\modxs}
)
)
(-[ur]_*!LU(1){\modns}
(-[ul]^>*!RU{\modxs}
)
(-[ur]_>*!LU(1){\modms}
)
)
}}
\newcommand*{\Modassocmnm}{\copy \modassocmnm}

\newbox\modassocmmrev
\setbox \modassocmmrev =\hbox{\xygraph{ !{0;/r1.0pc/:;/u1.0pc/::}[]
   -[u]_<*!DL(.5){\modms}
(-[ul]^*!RU{\idIs}
(-[ul]^>*!RU{\modxs}
,-[ur]_>*!LU{\modxs}
)
)
(-[ur]
(-[ur]_>*!LU(1){\modms}
)
)
}}
\newcommand*{\Modassocmmrev}{\copy \modassocmmrev}

\newbox\modassocmnn
\setbox \modassocmnn =\hbox{\xygraph{ !{0;/r1.0pc/:;/u1.0pc/::}[]
   -[u]_<*!DL(.5){\modns}
(-[ul]
(-[ul]^>*!RU{\modxs}
)
)
(-[ur]_*!LU(1){\modns}
(-[ul]^>*!RU{\modxs}
)
(-[ur]_>*!LU(1){\modms}
)
)
}}
\newcommand*{\Modassocmnn}{\copy \modassocmnn}

\newbox\modassocmnrev
\setbox \modassocmnrev =\hbox{\xygraph{ !{0;/r1.0pc/:;/u1.0pc/::}[]
   -[u]_<*!DL(.5){\modns}
(-[ul]^*!RU{\modxs}
(-[ul]^>*!RU{\modxs}
,-[ur]_>*!LU{\modxs}
)
)
(-[ur]
(-[ur]_>*!LU(1){\modms}
)
)
}}
\newcommand*{\Modassocmnrev}{\copy \modassocmnrev}

\newbox\modassocnnm
\setbox \modassocnnm =\hbox{\xygraph{ !{0;/r1.0pc/:;/u1.0pc/::}[]
   -[u]_<*!DL(.5){\modms}
(-[ul]
(-[ul]^>*!RU{\modxs}
)
)
(-[ur]_*!LU(1){\modns}
(-[ul]^>*!RU{\modxs}
)
(-[ur]_>*!LU(1){\modns}
)
)
}}
\newcommand*{\Modassocnnm}{\copy \modassocnnm}

\newbox\modassocnmrev
\setbox \modassocnmrev =\hbox{\xygraph{ !{0;/r1.0pc/:;/u1.0pc/::}[]
   -[u]_<*!DL(.5){\modms}
(-[ul]^*!RU{\modxs}
(-[ul]^>*!RU{\modxs}
,-[ur]_>*!LU{\modxs}
)
)
(-[ur]
(-[ur]_>*!LU(1){\modns}
)
)
}}
\newcommand*{\Modassocnmrev}{\copy \modassocnmrev}

\newbox\modassocnnn
\setbox \modassocnnn =\hbox{\xygraph{ !{0;/r1.0pc/:;/u1.0pc/::}[]
   -[u]_<*!DL(.5){\modns}
(-[ul]
(-[ul]^>*!RU{\modxs}
)
)
(-[ur]_*!LU(1){\modns}
(-[ul]^>*!RU{\modxs}
)
(-[ur]_>*!LU(1){\modns}
)
)
}}
\newcommand*{\Modassocnnn}{\copy \modassocnnn}

\newbox\modassocnmn
\setbox \modassocnmn =\hbox{\xygraph{ !{0;/r1.0pc/:;/u1.0pc/::}[]
   -[u]_<*!DL(.5){\modns}
(-[ul]
(-[ul]^>*!RU{\modxs}
)
)
(-[ur]_*!LU(1){\modms}
(-[ul]^>*!RU{\modxs}
)
(-[ur]_>*!LU(1){\modns}
)
)
}}
\newcommand*{\Modassocnmn}{\copy \modassocnmn}

\newbox\modassocnnrev
\setbox \modassocnnrev =\hbox{\xygraph{ !{0;/r1.0pc/:;/u1.0pc/::}[]
   -[u]_<*!DL(.5){\modns}
(-[ul]^*!RU{\modxs}
(-[ul]^>*!RU{\modxs}
,-[ur]_>*!LU{\modxs}
)
)
(-[ur]
(-[ur]_>*!LU(1){\modns}
)
)
}}
\newcommand*{\Modassocnnrev}{\copy \modassocnnrev}

\newbox\modassocnnidrev
\setbox \modassocnnidrev =\hbox{\xygraph{ !{0;/r1.0pc/:;/u1.0pc/::}[]
   -[u]_<*!DL(.5){\modns}
(-[ul]^*!RU{\idIs}
(-[ul]^>*!RU{\modxs}
,-[ur]_>*!LU{\modxs}
)
)
(-[ur]
(-[ur]_>*!LU(1){\modns}
)
)
}}
\newcommand*{\Modassocnnidrev}{\copy \modassocnnidrev}


\newbox\modpentonea
\setbox \modpentonea =\hbox{\xygraph{ !{0;/r0.8pc/:;/u0.8pc/::}[]
   -[u]_<*!DL(.5){\modms}
(-[ul]
(-[ul]
(-[ul]^>*!RU{\modxs}
)
)
)
(-[ur]_>*!LD(3.5){\modns}
(-[ul]
(-[ul]^>*!RU{\modxs}
)
)
(-[ur]_>*!LD(3.5){\modns}
(-[ul]^>*!RU{\modxs}
,-[ur]_>*!LU{\modms}
)
)
)
}}
\newcommand*{\Modpentonea}{\copy \modpentonea}

\newbox\modpentoneb
\setbox \modpentoneb =\hbox{\setlabelmargin{2pt}
\xygraph{ !{0;/r0.8pc/:;/u0.8pc/::}[]
     -[u]_<*+!DL(.5){\modms}
(-[ul]^>*!LD(-0.3){\modxs}
(-[ul]
(-[ul]^>*!RU(0.5){\modxs}
,-[ur]_>*!LU(0.5){\modxs}
)
)
,-[ur]_>*!RU(-0.1){\modns}
(-[ur]
(-[ur]_>*!LU(0.3){\modms}
,-[ul]^>*!RU(0.5){\modxs}
)
)
)
}}
\newcommand*{\Modpentoneb}{\copy \modpentoneb}

\newbox\modpentonec
\setbox \modpentonec =\hbox{\setlabelmargin{2pt}
\xygraph{ !{0;/r0.8pc/:;/u0.8pc/::}[]
     -[u]_<*+!DL(.5){\modms}
(-[ul]^-*!LD(-0.3){\idIs}
(-[ul]^-*!LD(-0.3){\modxs}
(-[ur]_>*!LU(0.5){\modxs}
,-[ul]^>*!RU(0.5){\modxs}
)
,-[ur]
(-[ur]_>*!LU(0.5){\modxs}
)
)
,-[ur]
(-[ur]
(-[ur]_>*!LU(0.3){\modms}
)
)
)  
}}
\newcommand*{\Modpentonec}{\copy \modpentonec}

\newbox\modpentoned
\setbox \modpentoned =\hbox{\setlabelmargin{2pt}
\xygraph{ !{0;/r0.8pc/:;/u0.8pc/::}[]
     -[u]_<*+!DL(.5){\modms}
(-[ul]
(-[ul]
(-[ul]^>*!RU(0.5){\modxs}
)
)
,-[ur]_*!LD(-0.4){\modns}
(-[ur]
(-[ur]_>*!LU(0.3){\modms}
)
,-[ul]^-*!LD(-0.3){\modxs}
(-[ul]^>*!RU(0.5){\modxs}
,-[ur]_>*!LU(0.5){\modxs}
)
)
) 
}}
\newcommand*{\Modpentoned}{\copy \modpentoned}

\newbox\modpentonee
\setbox \modpentonee =\hbox{\setlabelmargin{2pt}
\xygraph{ !{0;/r0.8pc/:;/u0.8pc/::}[]
     -[u]_<*+!DL(.5){\modms}
(-[ul]^-*!LD(-0.3){\idIs}
(-[ul]
(-[ul]^>*!RU(0.5){\modxs}
)
,-[ur]_>*!LD(2.5){\modxs}
(-[ur]_>*!LU(0.5){\modxs}
,-[ul]^>*!RU(0.5){\modxs}
)
)
,-[ur]
(-[ur]
(-[ur]_>*!LU(0.5){\modms}
)
)
)
}}
\newcommand*{\Modpentonee}{\copy \modpentonee}


\newbox\modpenttwoa
\setbox \modpenttwoa =\hbox{\xygraph{ !{0;/r0.8pc/:;/u0.8pc/::}[]
   -[u]_<*!DL(.5){\modns}
(-[ul]
(-[ul]
(-[ul]^>*!RU{\modxs}
)
)
)
(-[ur]_>*!LD(3.5){\modns}
(-[ul]
(-[ul]^>*!RU{\modxs}
)
)
(-[ur]_>*!LD(3.5){\modns}
(-[ul]^>*!RU{\modxs}
,-[ur]_>*!LU{\modms}
)
)
)
}}
\newcommand*{\Modpenttwoa}{\copy \modpenttwoa}

\newbox \modpenttwob
\setbox \modpenttwob =\hbox{\setlabelmargin{2pt}
\xygraph{ !{0;/r0.8pc/:;/u0.8pc/::}[]
     -[u]_<*+!DL(.5){\modns}
(-[ul]^>*!LD(-0.3){\modxs}
(-[ul]
(-[ul]^>*!RU(0.5){\modxs}
,-[ur]_>*!LU(0.5){\modxs}
)
)
,-[ur]_>*!RU(-0.1){\modns}
(-[ur]
(-[ur]_>*!LU(0.3){\modms}
,-[ul]^>*!RU(0.5){\modxs}
)
)
)
}}
\newcommand*{\Modpenttwob}{\copy \modpenttwob}

\newbox \modpenttwobb
\setbox \modpenttwobb =\hbox{\setlabelmargin{2pt}
\xygraph{ !{0;/r0.8pc/:;/u0.8pc/::}[]
     -[u]_<*+!DL(.5){\modns}
(-[ul]^>*!LD(-0.3){\idIs}
(-[ul]
(-[ul]^>*!RU(0.5){\modxs}
,-[ur]_>*!LU(0.5){\modxs}
)
)
,-[ur]_>*!RU(-0.1){\modns}
(-[ur]
(-[ur]_>*!LU(0.3){\modms}
,-[ul]^>*!RU(0.5){\modxs}
)
)
)
}}
\newcommand*{\Modpenttwobb}{\copy \modpenttwobb}

\newbox \modpenttwoc
\setbox \modpenttwoc =\hbox{\setlabelmargin{2pt}
\xygraph{ !{0;/r0.8pc/:;/u0.8pc/::}[]
     -[u]_<*+!DL(.5){\modns}
(-[ul]^-*!LD(-0.3){\modxs}
(-[ul]^-*!LD(-0.3){\modxs}
(-[ur]_>*!LU(0.5){\modxs}
,-[ul]^>*!RU(0.5){\modxs}
)
,-[ur]
(-[ur]_>*!LU(0.5){\modxs}
)
)
,-[ur]
(-[ur]
(-[ur]_>*!LU(0.3){\modms}
)
)
)  
}}
\newcommand*{\Modpenttwoc}{\copy \modpenttwoc}

\newbox \modpenttwocc
\setbox \modpenttwocc =\hbox{\setlabelmargin{2pt}
\xygraph{ !{0;/r0.8pc/:;/u0.8pc/::}[]
     -[u]_<*+!DL(.5){\modns}
(-[ul]^-*!LD(-0.3){\modxs}
(-[ul]^-*!LD(-0.3){\idIs}
(-[ur]_>*!LU(0.5){\modxs}
,-[ul]^>*!RU(0.5){\modxs}
)
,-[ur]
(-[ur]_>*!LU(0.5){\modxs}
)
)
,-[ur]
(-[ur]
(-[ur]_>*!LU(0.3){\modms}
)
)
)  
}}
\newcommand*{\Modpenttwocc}{\copy \modpenttwocc}

\newbox \modpenttwod
\setbox \modpenttwod =\hbox{\setlabelmargin{2pt}
\xygraph{ !{0;/r0.8pc/:;/u0.8pc/::}[]
     -[u]_<*+!DL(.5){\modns}
(-[ul]
(-[ul]
(-[ul]^>*!RU(0.5){\modxs}
)
)
,-[ur]_*!LD(-0.4){\modns}
(-[ur]
(-[ur]_>*!LU(0.3){\modms}
)
,-[ul]^-*!LD(-0.3){\modxs}
(-[ul]^>*!RU(0.5){\modxs}
,-[ur]_>*!LU(0.5){\modxs}
)
)
) 
}}
\newcommand*{\Modpenttwod}{\copy \modpenttwod}

\newbox \modpenttwoe
\setbox \modpenttwoe =\hbox{\setlabelmargin{2pt}
\xygraph{ !{0;/r0.8pc/:;/u0.8pc/::}[]
     -[u]_<*+!DL(.5){\modns}
(-[ul]^-*!LD(-0.3){\modxs}
(-[ul]
(-[ul]^>*!RU(0.5){\modxs}
)
,-[ur]_>*!LD(2.5){\modxs}
(-[ur]_>*!LU(0.5){\modxs}
,-[ul]^>*!RU(0.5){\modxs}
)
)
,-[ur]
(-[ur]
(-[ur]_>*!LU(0.5){\modms}
)
)
)
}}
\newcommand*{\Modpenttwoe}{\copy \modpenttwoe}


\newbox\modpentthreea
\setbox \modpentthreea =\hbox{\xygraph{ !{0;/r0.8pc/:;/u0.8pc/::}[]
   -[u]_<*!DL(.5){\modns}
(-[ul]
(-[ul]
(-[ul]^>*!RU{\modxs}
)
)
)
(-[ur]_>*!LD(2.8){\modms}
(-[ul]
(-[ul]^>*!RU{\modxs}
)
)
(-[ur]_>*!LD(3.5){\modns}
(-[ul]^>*!RU{\modxs}
,-[ur]_>*!LU{\modms}
)
)
)
}}
\newcommand*{\Modpentthreea}{\copy \modpentthreea}

\newbox \modpentthreeb
\setbox \modpentthreeb =\hbox{\setlabelmargin{2pt}
\xygraph{ !{0;/r0.8pc/:;/u0.8pc/::}[]
     -[u]_<*+!DL(.5){\modns}
(-[ul]^>*!LD(-0.3){\modxs}
(-[ul]
(-[ul]^>*!RU(0.5){\modxs}
,-[ur]_>*!LU(0.5){\modxs}
)
)
,-[ur]_>*!RU(-0.1){\modns}
(-[ur]
(-[ur]_>*!LU(0.3){\modms}
,-[ul]^>*!RU(0.5){\modxs}
)
)
)
}}
\newcommand*{\Modpentthreeb}{\copy \modpentthreeb}

\newbox \modpentthreebb
\setbox \modpentthreebb =\hbox{\setlabelmargin{2pt}
\xygraph{ !{0;/r0.8pc/:;/u0.8pc/::}[]
     -[u]_<*+!DL(.5){\modns}
(-[ul]^>*!LD(-0.3){\idIs}
(-[ul]
(-[ul]^>*!RU(0.5){\modxs}
,-[ur]_>*!LU(0.5){\modxs}
)
)
,-[ur]_>*!RU(-0.1){\modns}
(-[ur]
(-[ur]_>*!LU(0.3){\modms}
,-[ul]^>*!RU(0.5){\modxs}
)
)
)
}}
\newcommand*{\Modpentthreebb}{\copy \modpentthreebb}

\newbox \modpentthreec
\setbox \modpentthreec =\hbox{\setlabelmargin{2pt}
\xygraph{ !{0;/r0.8pc/:;/u0.8pc/::}[]
     -[u]_<*+!DL(.5){\modns}
(-[ul]^-*!LD(-0.3){\modxs}
(-[ul]^-*!LD(-0.3){\modxs}
(-[ur]_>*!LU(0.5){\modxs}
,-[ul]^>*!RU(0.5){\modxs}
)
,-[ur]
(-[ur]_>*!LU(0.5){\modxs}
)
)
,-[ur]
(-[ur]
(-[ur]_>*!LU(0.3){\modms}
)
)
)  
}}
\newcommand*{\Modpentthreec}{\copy \modpentthreec}

\newbox \modpentthreecc
\setbox \modpentthreecc =\hbox{\setlabelmargin{2pt}
\xygraph{ !{0;/r0.8pc/:;/u0.8pc/::}[]
     -[u]_<*+!DL(.5){\modns}
(-[ul]^-*!LD(-0.3){\modxs}
(-[ul]^-*!LD(-0.3){\idIs}
(-[ur]_>*!LU(0.5){\modxs}
,-[ul]^>*!RU(0.5){\modxs}
)
,-[ur]
(-[ur]_>*!LU(0.5){\modxs}
)
)
,-[ur]
(-[ur]
(-[ur]_>*!LU(0.3){\modms}
)
)
)  
}}
\newcommand*{\Modpentthreecc}{\copy \modpentthreecc}

\newbox\modpentthreed
\setbox\modpentthreed =\hbox{\setlabelmargin{2pt}
\xygraph{ !{0;/r0.8pc/:;/u0.8pc/::}[]
     -[u]_<*+!DL(.5){\modns}
(-[ul]
(-[ul]
(-[ul]^>*!RU(0.5){\modxs}
)
)
,-[ur]_*!LD(-0.4){\modms}
(-[ur]
(-[ur]_>*!LU(0.3){\modms}
)
,-[ul]^-*!LD(-0.3){\idIs}
(-[ul]^>*!RU(0.5){\modxs}
,-[ur]_>*!LU(0.5){\modxs}
)
)
) 
}}
\newcommand*{\Modpentthreed}{\copy \modpentthreed}

\newbox\modpentthreee
\setbox \modpentthreee =\hbox{\setlabelmargin{2pt}
\xygraph{ !{0;/r0.8pc/:;/u0.8pc/::}[]
     -[u]_<*+!DL(.5){\modns}
(-[ul]^-*!LD(-0.3){\modxs}
(-[ul]
(-[ul]^>*!RU(0.5){\modxs}
)
,-[ur]_<*!RU(3.8){\idIs}
(-[ur]_>*!LU(0.5){\modxs}
,-[ul]^>*!RU(0.5){\modxs}
)
)
,-[ur]
(-[ur]
(-[ur]_>*!LU(0.5){\modms}
)
)
)
}}
\newcommand*{\Modpentthreee}{\copy \modpentthreee}


\newbox\modpentfoura
\setbox \modpentfoura =\hbox{\xygraph{ !{0;/r0.8pc/:;/u0.8pc/::}[]
   -[u]_<*!DL(.5){\modns}
(-[ul]
(-[ul]
(-[ul]^>*!RU{\modxs}
)
)
)
(-[ur]_>*!LD(2.8){\modns}
(-[ul]
(-[ul]^>*!RU{\modxs}
)
)
(-[ur]_>*!LD(3.5){\modns}
(-[ul]^>*!RU{\modxs}
,-[ur]_>*!LU{\modns}
)
)
)
}}
\newcommand*{\Modpentfoura}{\copy \modpentfoura}

\newbox \modpentfourb
\setbox \modpentfourb =\hbox{\setlabelmargin{2pt}
\xygraph{ !{0;/r0.8pc/:;/u0.8pc/::}[]
     -[u]_<*+!DL(.5){\modns}
(-[ul]^>*!LD(-0.3){\modxs}
(-[ul]
(-[ul]^>*!RU(0.5){\modxs}
,-[ur]_>*!LU(0.5){\modxs}
)
)
,-[ur]_>*!RU(-0.1){\modns}
(-[ur]
(-[ur]_>*!LU(0.3){\modns}
,-[ul]^>*!RU(0.5){\modxs}
)
)
)
}}
\newcommand*{\Modpentfourb}{\copy \modpentfourb}

\newbox \modpentfourbb
\setbox \modpentfourbb =\hbox{\setlabelmargin{2pt}
\xygraph{ !{0;/r0.8pc/:;/u0.8pc/::}[]
     -[u]_<*+!DL(.5){\modns}
(-[ul]^>*!LD(-0.3){\idIs}
(-[ul]
(-[ul]^>*!RU(0.5){\modxs}
,-[ur]_>*!LU(0.5){\modxs}
)
)
,-[ur]_>*!RU(-0.1){\modns}
(-[ur]
(-[ur]_>*!LU(0.3){\modns}
,-[ul]^>*!RU(0.5){\modxs}
)
)
)
}}
\newcommand*{\Modpentfourbb}{\copy \modpentfourbb}

\newbox \modpentfourc
\setbox \modpentfourc =\hbox{\setlabelmargin{2pt}
\xygraph{ !{0;/r0.8pc/:;/u0.8pc/::}[]
     -[u]_<*+!DL(.5){\modns}
(-[ul]^-*!LD(-0.3){\modxs}
(-[ul]^-*!LD(-0.3){\modxs}
(-[ur]_>*!LU(0.5){\modxs}
,-[ul]^>*!RU(0.5){\modxs}
)
,-[ur]
(-[ur]_>*!LU(0.5){\modxs}
)
)
,-[ur]
(-[ur]
(-[ur]_>*!LU(0.3){\modns}
)
)
)  
}}
\newcommand*{\Modpentfourc}{\copy \modpentfourc}

\newbox \modpentfourcc
\setbox \modpentfourcc =\hbox{\setlabelmargin{2pt}
\xygraph{ !{0;/r0.8pc/:;/u0.8pc/::}[]
     -[u]_<*+!DL(.5){\modns}
(-[ul]^-*!LD(-0.3){\modxs}
(-[ul]^-*!LD(-0.3){\idIs}
(-[ur]_>*!LU(0.5){\modxs}
,-[ul]^>*!RU(0.5){\modxs}
)
,-[ur]
(-[ur]_>*!LU(0.5){\modxs}
)
)
,-[ur]
(-[ur]
(-[ur]_>*!LU(0.3){\modns}
)
)
)  
}}
\newcommand*{\Modpentfourcc}{\copy \modpentfourcc}

\newbox \modpentfourd
\setbox \modpentfourd =\hbox{\setlabelmargin{2pt}
\xygraph{ !{0;/r0.8pc/:;/u0.8pc/::}[]
     -[u]_<*+!DL(.5){\modns}
(-[ul]
(-[ul]
(-[ul]^>*!RU(0.5){\modxs}
)
)
,-[ur]_*!LD(-0.4){\modns}
(-[ur]
(-[ur]_>*!LU(0.3){\modns}
)
,-[ul]^-*!LD(-0.3){\modxs}
(-[ul]^>*!RU(0.5){\modxs}
,-[ur]_>*!LU(0.5){\modxs}
)
)
) 
}}
\newcommand*{\Modpentfourd}{\copy \modpentfourd}

\newbox \modpentfourdd
\setbox \modpentfourdd =\hbox{\setlabelmargin{2pt}
\xygraph{ !{0;/r0.8pc/:;/u0.8pc/::}[]
     -[u]_<*+!DL(.5){\modns}
(-[ul]
(-[ul]
(-[ul]^>*!RU(0.5){\modxs}
)
)
,-[ur]_*!LD(-0.4){\modns}
(-[ur]
(-[ur]_>*!LU(0.3){\modns}
)
,-[ul]^-*!LD(-0.3){\idIs}
(-[ul]^>*!RU(0.5){\modxs}
,-[ur]_>*!LU(0.5){\modxs}
)
)
) 
}}
\newcommand*{\Modpentfourdd}{\copy \modpentfourdd}

\newbox \modpentfoure
\setbox \modpentfoure =\hbox{\setlabelmargin{2pt}
\xygraph{ !{0;/r0.8pc/:;/u0.8pc/::}[]
     -[u]_<*+!DL(.5){\modns}
(-[ul]^-*!LD(-0.3){\modxs}
(-[ul]
(-[ul]^>*!RU(0.5){\modxs}
)
,-[ur]_>*!LD(2.5){\modxs}
(-[ur]_>*!LU(0.5){\modxs}
,-[ul]^>*!RU(0.5){\modxs}
)
)
,-[ur]
(-[ur]
(-[ur]_>*!LU(0.5){\modns}
)
)
)
}}
\newcommand*{\Modpentfoure}{\copy \modpentfoure}

\newbox \modpentfouree
\setbox \modpentfouree =\hbox{\setlabelmargin{2pt}
\xygraph{ !{0;/r0.8pc/:;/u0.8pc/::}[]
     -[u]_<*+!DL(.5){\modns}
(-[ul]^-*!LD(-0.3){\modxs}
(-[ul]
(-[ul]^>*!RU(0.5){\modxs}
)
,-[ur]_<*!RU(3.8){\idIs}
(-[ur]_>*!LU(0.5){\modxs}
,-[ul]^>*!RU(0.5){\modxs}
)
)
,-[ur]
(-[ur]
(-[ur]_>*!LU(0.5){\modns}
)
)
)
}}
\newcommand*{\Modpentfouree}{\copy \modpentfouree}


\newbox\modpentfivea
\setbox \modpentfivea =\hbox{\xygraph{ !{0;/r0.8pc/:;/u0.8pc/::}[]
   -[u]_<*!DL(.5){\modns}
(-[ul]
(-[ul]
(-[ul]^>*!RU{\modxs}
)
)
)
(-[ur]_>*!LD(2.8){\modns}
(-[ul]
(-[ul]^>*!RU{\modxs}
)
)
(-[ur]_>*!LD(2.5){\modms}
(-[ul]^>*!RU{\modxs}
,-[ur]_>*!LU{\modns}
)
)
)
}}
\newcommand*{\Modpentfivea}{\copy \modpentfivea}

\newbox \modpentfiveb
\setbox \modpentfiveb =\hbox{\setlabelmargin{2pt}
\xygraph{ !{0;/r0.8pc/:;/u0.8pc/::}[]
     -[u]_<*+!DL(.5){\modns}
(-[ul]^>*!LD(-0.3){\modxs}
(-[ul]
(-[ul]^>*!RU(0.5){\modxs}
,-[ur]_>*!LU(0.5){\modxs}
)
)
,-[ur]_>*!RU(-0.1){\modms}
(-[ur]
(-[ur]_>*!LU(0.3){\modns}
,-[ul]^>*!RU(0.5){\modxs}
)
)
)
}}
\newcommand*{\Modpentfiveb}{\copy \modpentfiveb}

\newbox \modpentfivebb
\setbox \modpentfivebb =\hbox{\setlabelmargin{2pt}
\xygraph{ !{0;/r0.8pc/:;/u0.8pc/::}[]
     -[u]_<*+!DL(.5){\modns}
(-[ul]^>*!LD(-0.3){\idIs}
(-[ul]
(-[ul]^>*!RU(0.5){\modxs}
,-[ur]_>*!LU(0.5){\modxs}
)
)
,-[ur]_>*!RU(-0.1){\modns}
(-[ur]
(-[ur]_>*!LU(0.3){\modms}
,-[ul]^>*!RU(0.5){\modxs}
)
)
)
}}
\newcommand*{\Modpentfivebb}{\copy \modpentfivebb}

\newbox \modpentfivec
\setbox \modpentfivec =\hbox{\setlabelmargin{2pt}
\xygraph{ !{0;/r0.8pc/:;/u0.8pc/::}[]
     -[u]_<*+!DL(.5){\modns}
(-[ul]^-*!LD(-0.3){\modxs}
(-[ul]^-*!LD(-0.3){\modxs}
(-[ur]_>*!LU(0.5){\modxs}
,-[ul]^>*!RU(0.5){\modxs}
)
,-[ur]
(-[ur]_>*!LU(0.5){\modxs}
)
)
,-[ur]
(-[ur]
(-[ur]_>*!LU(0.3){\modms}
)
)
)  
}}
\newcommand*{\Modpentfivec}{\copy \modpentfivec}

\newbox \modpentfivecc
\setbox \modpentfivecc =\hbox{\setlabelmargin{2pt}
\xygraph{ !{0;/r0.8pc/:;/u0.8pc/::}[]
     -[u]_<*+!DL(.5){\modns}
(-[ul]^-*!LD(-0.3){\modxs}
(-[ul]^-*!LD(-0.3){\idIs}
(-[ur]_>*!LU(0.5){\modxs}
,-[ul]^>*!RU(0.5){\modxs}
)
,-[ur]
(-[ur]_>*!LU(0.5){\modxs}
)
)
,-[ur]
(-[ur]
(-[ur]_>*!LU(0.3){\modms}
)
)
)  
}}
\newcommand*{\Modpentfivecc}{\copy \modpentfivecc}

\newbox \modpentfived
\setbox \modpentfived =\hbox{\setlabelmargin{2pt}
\xygraph{ !{0;/r0.8pc/:;/u0.8pc/::}[]
     -[u]_<*+!DL(.5){\modns}
(-[ul]
(-[ul]
(-[ul]^>*!RU(0.5){\modxs}
)
)
,-[ur]_*!LD(-0.4){\modms}
(-[ur]
(-[ur]_>*!LU(0.3){\modms}
)
,-[ul]^-*!LD(-0.3){\idIs}
(-[ul]^>*!RU(0.5){\modxs}
,-[ur]_>*!LU(0.5){\modxs}
)
)
) 
}}
\newcommand*{\Modpentfived}{\copy \modpentfived}

\newbox \modpentfivee
\setbox \modpentfivee =\hbox{\setlabelmargin{2pt}
\xygraph{ !{0;/r0.8pc/:;/u0.8pc/::}[]
     -[u]_<*+!DL(.5){\modns}
(-[ul]^-*!LD(-0.3){\modxs}
(-[ul]
(-[ul]^>*!RU(0.5){\modxs}
)
,-[ur]_<*!RU(3.8){\idIs}
(-[ur]_>*!LU(0.5){\modxs}
,-[ul]^>*!RU(0.5){\modxs}
)
)
,-[ur]
(-[ur]
(-[ur]_>*!LU(0.5){\modms}
)
)
)
}}
\newcommand*{\Modpentfivee}{\copy \modpentfivee}


\newbox\modpentsixa
\setbox \modpentsixa =\hbox{\xygraph{ !{0;/r0.8pc/:;/u0.8pc/::}[]
   -[u]_<*!DL(.5){\modns}
(-[ul]
(-[ul]
(-[ul]^>*!RU{\modxs}
)
)
)
(-[ur]_>*!LD(2.8){\modms}
(-[ul]
(-[ul]^>*!RU{\modxs}
)
)
(-[ur]_>*!LD(3.5){\modns}
(-[ul]^>*!RU{\modxs}
,-[ur]_>*!LU{\modns}
)
)
)
}}
\newcommand*{\Modpentsixa}{\copy \modpentsixa}

\newbox \modpentsixb
\setbox \modpentsixb =\hbox{\setlabelmargin{2pt}
\xygraph{ !{0;/r0.8pc/:;/u0.8pc/::}[]
     -[u]_<*+!DL(.5){\modns}
(-[ul]^>*!LD(-0.3){\modxs}
(-[ul]
(-[ul]^>*!RU(0.5){\modxs}
,-[ur]_>*!LU(0.5){\modxs}
)
)
,-[ur]_>*!RU(-0.1){\modns}
(-[ur]
(-[ur]_>*!LU(0.3){\modms}
,-[ul]^>*!RU(0.5){\modxs}
)
)
)
}}
\newcommand*{\Modpentsixb}{\copy \modpentsixb}

\newbox \modpentsixbb
\setbox \modpentsixbb =\hbox{\setlabelmargin{2pt}
\xygraph{ !{0;/r0.8pc/:;/u0.8pc/::}[]
     -[u]_<*+!DL(.5){\modns}
(-[ul]^>*!LD(-0.3){\idIs}
(-[ul]
(-[ul]^>*!RU(0.5){\modxs}
,-[ur]_>*!LU(0.5){\modxs}
)
)
,-[ur]_>*!RU(-0.1){\modns}
(-[ur]
(-[ur]_>*!LU(0.3){\modms}
,-[ul]^>*!RU(0.5){\modxs}
)
)
)
}}
\newcommand*{\Modpentsixbb}{\copy \modpentsixbb}

\newbox \modpentsixc
\setbox \modpentsixc =\hbox{\setlabelmargin{2pt}
\xygraph{ !{0;/r0.8pc/:;/u0.8pc/::}[]
     -[u]_<*+!DL(.5){\modns}
(-[ul]^-*!LD(-0.3){\idIs}
(-[ul]^-*!LD(-0.3){\modxs}
(-[ur]_>*!LU(0.5){\modxs}
,-[ul]^>*!RU(0.5){\modxs}
)
,-[ur]
(-[ur]_>*!LU(0.5){\modxs}
)
)
,-[ur]
(-[ur]
(-[ur]_>*!LU(0.3){\modns}
)
)
)  
}}
\newcommand*{\Modpentsixc}{\copy \modpentsixc}

\newbox \modpentsixcc
\setbox \modpentsixcc =\hbox{\setlabelmargin{2pt}
\xygraph{ !{0;/r0.8pc/:;/u0.8pc/::}[]
     -[u]_<*+!DL(.5){\modns}
(-[ul]^-*!LD(-0.3){\idIs}
(-[ul]^-*!LD(-0.3){\modxs}
(-[ur]_>*!LU(0.5){\modxs}
,-[ul]^>*!RU(0.5){\modxs}
)
,-[ur]
(-[ur]_>*!LU(0.5){\modxs}
)
)
,-[ur]
(-[ur]
(-[ur]_>*!LU(0.3){\modns}
)
)
)  
}}
\newcommand*{\Modpentsixcc}{\copy \modpentsixcc}

\newbox \modpentsixd
\setbox \modpentsixd =\hbox{\setlabelmargin{2pt}
\xygraph{ !{0;/r0.8pc/:;/u0.8pc/::}[]
     -[u]_<*+!DL(.5){\modns}
(-[ul]
(-[ul]
(-[ul]^>*!RU(0.5){\modxs}
)
)
,-[ur]_*!LD(-0.4){\modms}
(-[ur]
(-[ur]_>*!LU(0.3){\modns}
)
,-[ul]^-*!LD(-0.3){\modxs}
(-[ul]^>*!RU(0.5){\modxs}
,-[ur]_>*!LU(0.5){\modxs}
)
)
) 
}}
\newcommand*{\Modpentsixd}{\copy \modpentsixd}

\newbox \modpentsixe
\setbox \modpentsixe =\hbox{\setlabelmargin{2pt}
\xygraph{ !{0;/r0.8pc/:;/u0.8pc/::}[]
     -[u]_<*+!DL(.5){\modns}
(-[ul]^-*!LD(-0.3){\idIs}
(-[ul]
(-[ul]^>*!RU(0.5){\modxs}
)
,-[ur]_>*!LD(2.5){\modxs}
(-[ur]_>*!LU(0.5){\modxs}
,-[ul]^>*!RU(0.5){\modxs}
)
)
,-[ur]
(-[ur]
(-[ur]_>*!LU(0.5){\modns}
)
)
)
}}
\newcommand*{\Modpentsixe}{\copy \modpentsixe}


\newbox\modpentsevena
\setbox \modpentsevena =\hbox{\xygraph{ !{0;/r0.8pc/:;/u0.8pc/::}[]
   -[u]_<*!DL(.5){\modms}
(-[ul]
(-[ul]
(-[ul]^>*!RU{\modxs}
)
)
)
(-[ur]_>*!LD(2.8){\modns}
(-[ul]
(-[ul]^>*!RU{\modxs}
)
)
(-[ur]_>*!LD(2.8){\modms}
(-[ul]^>*!RU{\modxs}
,-[ur]_>*!LU{\modns}
)
)
)
}}
\newcommand*{\Modpentsevena}{\copy \modpentsevena}

\newbox \modpentsevenb
\setbox \modpentsevenb =\hbox{\setlabelmargin{2pt}
\xygraph{ !{0;/r0.8pc/:;/u0.8pc/::}[]
     -[u]_<*+!DL(.5){\modms}
(-[ul]^>*!LD(-0.3){\modxs}
(-[ul]
(-[ul]^>*!RU(0.5){\modxs}
,-[ur]_>*!LU(0.5){\modxs}
)
)
,-[ur]_>*!RU(-0.1){\modns}
(-[ur]
(-[ur]_>*!LU(0.3){\modns}
,-[ul]^>*!RU(0.5){\modxs}
)
)
)
}}
\newcommand*{\Modpentsevenb}{\copy \modpentsevenb}

\newbox \modpentsevenbb
\setbox \modpentsevenbb =\hbox{\setlabelmargin{2pt}
\xygraph{ !{0;/r0.8pc/:;/u0.8pc/::}[]
     -[u]_<*+!DL(.5){\modms}
(-[ul]^>*!LD(-0.3){\idIs}
(-[ul]
(-[ul]^>*!RU(0.5){\modxs}
,-[ur]_>*!LU(0.5){\modxs}
)
)
,-[ur]_>*!RU(-0.1){\modms}
(-[ur]
(-[ur]_>*!LU(0.3){\modns}
,-[ul]^>*!RU(0.5){\modxs}
)
)
)
}}
\newcommand*{\Modpentsevenbb}{\copy \modpentsevenbb}

\newbox \modpentsevenc
\setbox \modpentsevenc =\hbox{\setlabelmargin{2pt}
\xygraph{ !{0;/r0.8pc/:;/u0.8pc/::}[]
     -[u]_<*+!DL(.5){\modms}
(-[ul]^-*!LD(-0.3){\modxs}
(-[ul]^-*!LD(-0.3){\modxs}
(-[ur]_>*!LU(0.5){\modxs}
,-[ul]^>*!RU(0.5){\modxs}
)
,-[ur]
(-[ur]_>*!LU(0.5){\modxs}
)
)
,-[ur]
(-[ur]
(-[ur]_>*!LU(0.3){\modns}
)
)
)  
}}
\newcommand*{\Modpentsevenc}{\copy \modpentsevenc}

\newbox \modpentsevencc
\setbox \modpentsevencc =\hbox{\setlabelmargin{2pt}
\xygraph{ !{0;/r0.8pc/:;/u0.8pc/::}[]
     -[u]_<*+!DL(.5){\modms}
(-[ul]^-*!LD(-0.3){\modxs}
(-[ul]^-*!LD(-0.3){\idIs}
(-[ur]_>*!LU(0.5){\modxs}
,-[ul]^>*!RU(0.5){\modxs}
)
,-[ur]
(-[ur]_>*!LU(0.5){\modxs}
)
)
,-[ur]
(-[ur]
(-[ur]_>*!LU(0.3){\modns}
)
)
)  
}}
\newcommand*{\Modpentsevencc}{\copy \modpentsevencc}

\newbox \modpentsevend
\setbox \modpentsevend =\hbox{\setlabelmargin{2pt}
\xygraph{ !{0;/r0.8pc/:;/u0.8pc/::}[]
     -[u]_<*+!DL(.5){\modms}
(-[ul]
(-[ul]
(-[ul]^>*!RU(0.5){\modxs}
)
)
,-[ur]_*!LD(-0.4){\modns}
(-[ur]
(-[ur]_>*!LU(0.3){\modns}
)
,-[ul]^-*!LD(-0.3){\modxs}
(-[ul]^>*!RU(0.5){\modxs}
,-[ur]_>*!LU(0.5){\modxs}
)
)
) 
}}
\newcommand*{\Modpentsevend}{\copy \modpentsevend}

\newbox \modpentsevendd
\setbox \modpentsevendd =\hbox{\setlabelmargin{2pt}
\xygraph{ !{0;/r0.8pc/:;/u0.8pc/::}[]
     -[u]_<*+!DL(.5){\modms}
(-[ul]
(-[ul]
(-[ul]^>*!RU(0.5){\modxs}
)
)
,-[ur]_*!LD(-0.4){\modns}
(-[ur]
(-[ur]_>*!LU(0.3){\modns}
)
,-[ul]^-*!LD(-0.3){\idIs}
(-[ul]^>*!RU(0.5){\modxs}
,-[ur]_>*!LU(0.5){\modxs}
)
)
) 
}}
\newcommand*{\Modpentsevendd}{\copy \modpentsevendd}

\newbox \modpentsevene
\setbox \modpentsevene =\hbox{\setlabelmargin{2pt}
\xygraph{ !{0;/r0.8pc/:;/u0.8pc/::}[]
     -[u]_<*+!DL(.5){\modms}
(-[ul]^-*!LD(-0.3){\modxs}
(-[ul]
(-[ul]^>*!RU(0.5){\modxs}
)
,-[ur]_>*!LD(2.5){\modxs}
(-[ur]_>*!LU(0.5){\modxs}
,-[ul]^>*!RU(0.5){\modxs}
)
)
,-[ur]
(-[ur]
(-[ur]_>*!LU(0.5){\modns}
)
)
)
}}
\newcommand*{\Modpentsevene}{\copy \modpentsevene}

\newbox \modpentsevenee
\setbox \modpentsevenee =\hbox{\setlabelmargin{2pt}
\xygraph{ !{0;/r0.8pc/:;/u0.8pc/::}[]
     -[u]_<*+!DL(.5){\modms}
(-[ul]^-*!LD(-0.3){\modxs}
(-[ul]
(-[ul]^>*!RU(0.5){\modxs}
)
,-[ur]_<*!RU(3.8){\idIs}
(-[ur]_>*!LU(0.5){\modxs}
,-[ul]^>*!RU(0.5){\modxs}
)
)
,-[ur]
(-[ur]
(-[ur]_>*!LU(0.5){\modns}
)
)
)
}}
\newcommand*{\Modpentsevenee}{\copy \modpentsevenee}


\newbox\modpenteighta
\setbox \modpenteighta =\hbox{\xygraph{ !{0;/r0.8pc/:;/u0.8pc/::}[]
   -[u]_<*!DL(.5){\modms}
(-[ul]
(-[ul]
(-[ul]^>*!RU{\modxs}
)
)
)
(-[ur]_>*!LD(2.8){\modns}
(-[ul]
(-[ul]^>*!RU{\modxs}
)
)
(-[ur]_>*!LD(3.5){\modns}
(-[ul]^>*!RU{\modxs}
,-[ur]_>*!LU{\modns}
)
)
)
}}
\newcommand*{\Modpenteighta}{\copy \modpenteighta}

\newbox \modpenteightb
\setbox \modpenteightb =\hbox{\setlabelmargin{2pt}
\xygraph{ !{0;/r0.8pc/:;/u0.8pc/::}[]
     -[u]_<*+!DL(.5){\modns}
(-[ul]^>*!LD(-0.3){\modxs}
(-[ul]
(-[ul]^>*!RU(0.5){\modxs}
,-[ur]_>*!LU(0.5){\modxs}
)
)
,-[ur]_>*!RU(-0.1){\modns}
(-[ur]
(-[ur]_>*!LU(0.3){\modms}
,-[ul]^>*!RU(0.5){\modxs}
)
)
)
}}
\newcommand*{\Modpenteightb}{\copy \modpenteightb}

\newbox \modpenteightbb
\setbox \modpenteightbb =\hbox{\setlabelmargin{2pt}
\xygraph{ !{0;/r0.8pc/:;/u0.8pc/::}[]
     -[u]_<*+!DL(.5){\modms}
(-[ul]^>*!LD(-0.3){\modxs}
(-[ul]
(-[ul]^>*!RU(0.5){\modxs}
,-[ur]_>*!LU(0.5){\modxs}
)
)
,-[ur]_>*!RU(-0.1){\modns}
(-[ur]
(-[ur]_>*!LU(0.3){\modns}
,-[ul]^>*!RU(0.5){\modxs}
)
)
)
}}
\newcommand*{\Modpenteightbb}{\copy \modpenteightbb}

\newbox \modpenteightc
\setbox \modpenteightc =\hbox{\setlabelmargin{2pt}
\xygraph{ !{0;/r0.8pc/:;/u0.8pc/::}[]
     -[u]_<*+!DL(.5){\modns}
(-[ul]^-*!LD(-0.3){\modxs}
(-[ul]^-*!LD(-0.3){\modxs}
(-[ur]_>*!LU(0.5){\modxs}
,-[ul]^>*!RU(0.5){\modxs}
)
,-[ur]
(-[ur]_>*!LU(0.5){\modxs}
)
)
,-[ur]
(-[ur]
(-[ur]_>*!LU(0.3){\modms}
)
)
)  
}}
\newcommand*{\Modpenteightc}{\copy \modpenteightc}

\newbox \modpenteightcc
\setbox \modpenteightcc =\hbox{\setlabelmargin{2pt}
\xygraph{ !{0;/r0.8pc/:;/u0.8pc/::}[]
     -[u]_<*+!DL(.5){\modms}
(-[ul]^-*!LD(-0.3){\modxs}
(-[ul]^-*!LD(-0.3){\modxs}
(-[ur]_>*!LU(0.5){\modxs}
,-[ul]^>*!RU(0.5){\modxs}
)
,-[ur]
(-[ur]_>*!LU(0.5){\modxs}
)
)
,-[ur]
(-[ur]
(-[ur]_>*!LU(0.3){\modns}
)
)
)  
}}
\newcommand*{\Modpenteightcc}{\copy \modpenteightcc}

\newbox \modpenteightd
\setbox \modpenteightd =\hbox{\setlabelmargin{2pt}
\xygraph{ !{0;/r0.8pc/:;/u0.8pc/::}[]
     -[u]_<*+!DL(.5){\modms}
(-[ul]
(-[ul]
(-[ul]^>*!RU(0.5){\modxs}
)
)
,-[ur]_*!LD(-0.4){\modns}
(-[ur]
(-[ur]_>*!LU(0.3){\modns}
)
,-[ul]^-*!LD(-0.3){\modxs}
(-[ul]^>*!RU(0.5){\modxs}
,-[ur]_>*!LU(0.5){\modxs}
)
)
) 
}}
\newcommand*{\Modpenteightd}{\copy \modpenteightd}

\newbox \modpenteightdd
\setbox \modpenteightdd =\hbox{\setlabelmargin{2pt}
\xygraph{ !{0;/r0.8pc/:;/u0.8pc/::}[]
     -[u]_<*+!DL(.5){\modms}
(-[ul]
(-[ul]
(-[ul]^>*!RU(0.5){\modxs}
)
)
,-[ur]_*!LD(-0.4){\modns}
(-[ur]
(-[ur]_>*!LU(0.3){\modns}
)
,-[ul]^-*!LD(-0.3){\idIs}
(-[ul]^>*!RU(0.5){\modxs}
,-[ur]_>*!LU(0.5){\modxs}
)
)
) 
}}
\newcommand*{\Modpenteightdd}{\copy \modpenteightdd}

\newbox \modpenteighte
\setbox \modpenteighte =\hbox{\setlabelmargin{2pt}
\xygraph{ !{0;/r0.8pc/:;/u0.8pc/::}[]
     -[u]_<*+!DL(.5){\modns}
(-[ul]^-*!LD(-0.3){\modxs}
(-[ul]
(-[ul]^>*!RU(0.5){\modxs}
)
,-[ur]_<*!RU(3.8){\idIs}
(-[ur]_>*!LU(0.5){\modxs}
,-[ul]^>*!RU(0.5){\modxs}
)
)
,-[ur]
(-[ur]
(-[ur]_>*!LU(0.5){\modms}
)
)
)
}}
\newcommand*{\Modpenteighte}{\copy \modpenteighte}

\author{Alexei Davydov\footnote{Department of Mathematics and Statistics, University of New Hampshire, Durham, NH 03824, USA; alexei1davydov@gmail.com}\ \ and Tom Booker\footnote{Department of Mathematics, Faculty of Science, Macquarie University, Sydney, NSW 2109, Australia; tom.booker@mq.edu.au}\, \footnote{The author was supported by an Australian Postgraduate Award.}}
\title{Commutative Algebras in Fibonacci Categories}
\maketitle
\date{}

\begin{abstract}
By studying NIM-representations we show that the Fibonacci category and its tensor powers are completely anisotropic; that is, they do not have any non-trivial separable commutative ribbon algebras.

As an application we deduce that a chiral algebra with the representation category equivalent to a product of Fibonacci categories is maximal; that is, it is not a proper subalgebra of another chiral algebra.
In particular the chiral algebras of the Yang-Lee model, the WZW models of $G_2$ and $F_4$ at level 1, as well as their tensor powers, are maximal.
\end{abstract}
\tableofcontents

\section{Introduction}

Application of category theory to rational conformal field theory has a long history. Representations of chiral symmetries of a rational conformal field theory form a certain kind of braided tensor category known as a {\em modular} category \cite{ms,Hu} and many features of conformal field theory such as boundary extensions, bulk field space, symmetries of conformal field theory have neat categorical interpretations. In particular category theory is well suited for studying extensions of chiral algebras. According to A. Kirillov, Y.-Z. Huang and J. Lepowsky (see \cite{KiO} and references therein) a chiral extension of a chiral algebra $\V$ corresponds to a certain commutative algebra in the modular category of representations of $\V$. More precisely the commutative algebras in question should be simple, separable and ribbon (concepts explained in section \ref{prel}). Moreover the category of representations of the extended chiral algebra can be read off from the corresponding commutative algebra as the category of its local modules. Relatively simple categorical arguments show that there are only a finite number of simple, separable, ribbon, commutative algebras in a given modular category and that all maximal algebras have equivalent categories of local modules (see \cite{dmno} and references therein). This immediately implies that a rational chiral algebra has only a finite number of extensions. Moreover maximal extensions of a rational chiral algebra all have the same representation type. 
All this indicates the special role played by maximal (rational) chiral algebras and their categories of representations. The property that characterises categories  of representations of maximal chiral algebras is the absence of non-trivial separable, ribbon, commutative algebras in the category. Such categories were called {\em completely anisotropic} in \cite{dmno}. 
This simplest class of maximal chiral algebras is formed by holomorphic chiral algebras; that is, chiral algebras with no non-trivial representations. Clearly tensor products of holomorphic algebras are again holomorphic.
Here we look at another class of maximal chiral algebras closed under tensor products. This class (as with the class of holomorphic chiral algebras) is defined by their categories of representations.

In this paper we deal with a specific type of modular category with just two simple objects: $I$ and $X$; and the tensor product decompositions:
$$I\ot I = I,\quad X\ot I = I\ot X = X,\quad X\ot X = I\oplus X.$$
Such categories are called {\em Fibonacci} categories. There are four non-equivalent Fibonacci modular categories $\fib_u$, labelled by primitive roots of unity $u$ of order 10 (section \ref{secfib}). 
We prove that tensor powers $\fib_u^{\boxtimes l}$ of a Fibonacci modular category are completely anisotropic; that is, they do not have non-trivial separable, ribbon, commutative algebras. We do it by describing nonnegative integer matrix (or NIM-) representations of the fusion rules of $\fib_u^{\boxtimes l}$. They give us an insight into possible module categories over $\fib_u^{\boxtimes l}$. Since an algebra in a monoidal category gives rise to a module category, our knowledge of NIM-representations for $\fib_u^{\boxtimes l}$ allows us to prove complete anisotropy of $\fib_u^{\boxtimes l}$. Actually our methods give a stronger result. A product of Fibonacci categories $\fib_u$ is completely anisotropic as long as none of the indices $u$ is inverse to each other. The condition that $uv\not=1$ is not accidental. General category theory implies that the category $\fib_u\boxtimes\fib_{u^{-1}}$ always has a non-trivial simple, separable, ribbon, commutative algebra. 

Complete anisotropy of Fibonacci categories implies that chiral algebras with categories of representations of the form $\fib_u^{\boxtimes l}$ are maximal.
Among such chiral algebras are the chiral algebra $M(2,5)$ from the minimal series; that is, the chiral algebra of the Yang-Lee model (the category of representations is $\fib_u$, with $u=e^{\frac{\pi i}{5}}$), a maximal extension of $M(3,5)^{\times 8}\times M(2,5)^{\times 7}$ (the category of representations is $\fib_u$, with $u=e^{\frac{9\pi i}{5}}$), the affine chiral algebra $G_{2,1}$ (the category of representations is $\fib_u$, with $u=e^{\frac{3\pi i}{5}}$) and the affine chiral algebra $F_{4,1}$ (with the category of representations $\fib_u$ for $u=e^{\frac{7\pi i}{5}}$). Clearly tensoring with holomorphic algebras keeps the class of Fibonacci chiral algebras closed. Among examples of chiral algebras, which are not products of the above mentioned Fibonacci chiral algebras with holomorphic algebras is the coset $F_{4,1}/G_{2,1}$ (the category of representations is $\fib_u\boxtimes\fib_u$ with $u=e^{\frac{7\pi i}{5}}$). This coset can not be written as a tensor product of two chiral algebras of Fibonacci type. 

\section{Preliminaries}\label{prel}

Throughout the paper we assume that the ground field $k$ is an algebraically closed field of characteristic zero (such as the field $\C$ of complex numbers). 

\subsection{Modular categories}

Recall that a rigid monoidal category is called {\em fusion} when it is semi-simple $k$-linear together with a $k$-linear tensor product, finite-dimensional hom spaces and a finite number of simple objects (up to isomorphism). 
We denote the set (of representatives) of isomorphism classes of simple objects in $\cC$ by $Irr(\cC)$. 

Slightly changing the definition from \cite{tu}, we call a fusion category {\em modular} if it is rigid, braided, ribbon and satisfies the non-degeneracy ({\em modularity}) condition: for isomorphism classes of simple objects, the traces of the double braidings form a non-degenerate matrix
$$\tilde{S} = (\tilde{S}_{X,Y})_{X,Y\in Irr(\cC)},\quad \tilde{S}_{X,Y} = tr(c_{X,Y}c_{Y,X}).$$
Here $c_{X,Y}:X\otimes Y\to Y\otimes X$ is the braiding (see \cite{tu,BaKi} for details).

Let $\cC$ be a ribbon category. 
Following \cite{tu} define $\overline{\cC}$ to be $\cC$ as a monoidal category equipped with a new braiding and ribbon twist:
$$\overline{c}_{X,Y}=c_{Y,X}^{-1},\quad \overline{\theta}_X = \theta_X^{-1}.$$
Again it is very easy to see that for a modular $\cC$, $\overline\cC$ is also modular.

Recall that the Deligne tensor product $\cC\boxtimes\caD$ of two fusion categories is a fusion category with simple objects $Irr(\cC\boxtimes\caD) = Irr(\cC)\times Irr(\caD)$ and the tensor product defined by
$$(X\boxtimes Y)\otimes (Z\boxtimes W) = (X\otimes Z)\boxtimes(Y\otimes W).$$
It is straightforward to see that the Deligne tensor product of two modular categories is modular.

Let $\cC$ be a full modular subcategory of a modular category $\caD$. It was proved in \cite{mu1} that as modular categories
\begin{equation}\label{mudec}\caD = \cC\boxtimes\cC_\caD(\cC),\end{equation}
where the category $\cC_\caD(\cC)$ (the {\em M\"uger's centraliser} of $\cC$ in $\caD$) is defined as the full subcategory of $\caD$ of objects {\em transparent} with respect to objects of $\cC$:
$$\cC_\caD(\cC) = \{X\in\caD|\ c_{Y,X}c_{X,Y} = 1_{X\ot Y},\ \forall Y\in\cC\}.$$

Let $\cZ (\cC)$ be the monoidal centre of $\cC$ \cite{js2}. Recall that a braiding $c$ in $\cC$ gives rise to a braided monoidal functor $\iota_+:\cC\to \cZ (\cC)$. 
Using the (conjugate) braiding $\overline c$ we can define another braided monoidal functor $\iota_-:\overline\cC\to \cZ(\cC)$. 
Taking the Deligne tensor product we can combine these two functors into one braided monoidal functor $$\iota:\cC\boxtimes\overline\cC\to \cZ(\cC).$$
The following characterisation of modularity was proven in \cite{mu}. 
A braided fusion category $\cC$ is modular if and only if the functor $\iota$ is an equivalence. 

\begin{example}
We call a fusion category $\cC$ {\em pointed} if all its simple objects are invertible, i.e. $X\otimes X^*\cong 1$ for any simple $X\in\cC$. In this case the set $Irr(\cC)$ of isomorphism classes of simple objects is a group (with respect to the tensor product). Clearly for a braided $\cC$ this group must be abelian.
It was shown in \cite{js} that (up to braided equivalence) braided structures on a pointed category $\cC$ with the group $A = Irr(\cC)$ are in one-to-one correspondence with functions $q:A\to k^*$ ($k^*$ denotes the multiplicative group of $k$) satisfying
$q(a^{-1}) = q(a)$ and such that $\sigma(a,b)= q(ab)q(a)^{-1}q(b)^{-1}$ is bilinear in $a$ and $b$ (a {\em bicharacter}). The correspondence assigns to a braiding $c$ the function $q$ such that $c_{a,a} = q(a)1_{a\otimes a}$. We will denote by $\cC(A,q)$ the pointed category with the group of objects $A$ and the braiding corresponding to $q$. The following are straightforward
$$\cC(A\times B,q_A\times q_B) \simeq \cC(A,q_A)\boxtimes\cC(B,q_B),\quad \overline{\cC(A,q)} \simeq \cC(A,q^{-1}).$$
The braiding of $\cC(A,q)$ is non-degenerate if and only if the form $\sigma$ is non-degenerate:
$$ker(\sigma) = \{a\in A|\ \sigma(a,b)=1\ \forall b\in A\} = 1.$$
Modular structures (ribbon twists) on the braided category $\cC(A,q)$ correspond to homomorphisms $d:A\to \Z/2\Z$ (the dimension function). The ribbon twist $\theta$ corresponding to $d$ has the form
$$\theta_a = d(a)q(a)1_a.$$
\end{example}

\subsection{Module categories, algebras in monoidal categories and their modules}\label{algandmods}

Let $\ev$ be a monoidal category.
A \emph{module category} (see \cite{qu,mkgj,mc}) is a category $\bm$ together with a functor $\ast: \ev\times\bm\to\bm$ and natural families of isomorphisms
$$\alpha_{X,Y,M}: (X\otimes Y)\ast M \to X\ast(Y\ast M)$$
and,
$$\lambda_{M}: I\ast M \to M$$
such that diagrams (1.1), (1.2) and (1.3) of \cite{mkgj} commute.

An (associative, unital) {\em algebra} in a monoidal category $\cC$ is a triple $(A,\mu,\iota)$ consisting of an object $A\in\cC$ together with a {\em multiplication} $\mu:A\otimes A\to A$ and a {\em unit} map $\iota:I\to A$, satisfying {\em associativiy}
$$\mu(\mu\otimes 1) = \mu(1\otimes\mu),$$ 
and {\em unit} 
$$\mu(\iota\otimes 1) = 1 =\mu(1\otimes\iota)$$
axioms. 
Where it will not cause confusion we will be talking about an algebra $A$, suppressing its multiplication and unit maps. A {\em homomorphism} of algebra is a morphism of underlying objects preserving algebra structures in the obvious way. An algebra is {\em simple} if any (non-zero) homomorphism out of it is injective.

A right {\em module} over an algebra $A$ is a pair $(M,\nu)$, where $M$ is an object of $\cC$ and $\nu:M\otimes A\to M$ is a morphism ({\em action map}), such that 
$$\nu(\nu\otimes 1) = \nu(1\otimes\mu).$$ 
A {\em homomorphism} of right $A$-modules $M\to N$ is a morphism $f:M\to N$ in $\cC$ such that 
$$\nu_N(f\otimes 1) = f\nu_M.$$

Right modules over an algebra $A\in\cC$ together with module homomorphisms form a category $\cC_A$.
The forgetful functor $\cC_A\to\cC$ has a right adjoint, which sends an object $X\in\cC$ into the {\em free} $A$-module $X\otimes A$, with $A$-module structure defined by
$$\xymatrix{X\otimes A\otimes A \ar[r]^-{1\mu} & X\otimes A.}$$
Note that, since the action map $M\otimes A\to M$ is an epimorphism of right $A$-modules, any right $A$-module is a quotient of a free module.

More generally, for any right $A$-module $M$ and any $X\in\cC$ the tensor product $X\otimes M$ has a structure of a right $A$-module
$$\xymatrix{X\otimes M\otimes A \ar[r]^-{1\mu} & X\otimes M.}$$
This makes the category of modules $\cC_A$ a left module category over $\cC$. 
The adjoint pairing
$$\xymatrix{\cC_A \ar@/^10pt/[rr]^-{U} &\perp & \cC \ar@/^10pt/[ll]^-{-\otimes A} }$$
consisting of the forgetful and free $A$-module functors is an adjoint pair of $\cC$-module functors. 

The next notion provides algebras with semi-simple categories of modules.
An algebra $(A,\mu,\iota)$ in a rigid braided monoidal category $\cC$ is called {\em separable} if the following composition (denoted $e:A\otimes A\to I$) is a non-degenerate pairing:
$$\xymatrix{A\otimes A \ar[r]^(.6)\mu & A \ar[r]^{\epsilon} & I.}$$ 
Here $\epsilon$ is the composite
$$\xymatrix{A \ar[r]^-{1\ot\kappa_A} & A\otimes A\otimes A^* \ar[r]^-{\mu\ot 1} & A\otimes A^* \ar[r]^-{c_{A,A^*}} & A^*\otimes A \ar[r]^-{ev_A} & I,}$$ 
where $\kappa_A$ and $ev_A$ are duality morphisms for $A$. 
Non-degeneracy of $e$ means that there is a morphism $\kappa: I\to A\otimes A$ such that the composition
$$\xymatrix{A \ar[r]^-{1\ot\kappa} & A^{\otimes 3} \ar[r]^-{e\ot 1} & A}$$ 
is the identity. 
It also implies that the similar composition
$$\xymatrix{A \ar[r]^-{\kappa\ot 1} & A^{\otimes 3} \ar[r]^-{1\ot e} & A}$$ 
is also the identity.

Using the graphical calculus for morphisms in a (rigid) monoidal category \cite{js1} one can represent morphisms between tensor powers of a separable algebra by graphs (one dimensional CW-complexes), whose end vertices are separated into incoming and outgoing. 
For example, the multiplication map $\mu$ is represented by a trivalent graph with two incoming and one outgoing ends, the duality $\epsilon$ is an interval, with both incoming ends etc. 
It turns out (e.g in symmetric case, it follows from the results of \cite{rsw}) that separability implies that we can contract loops in connected graphs with at least one end.

For a separable algebra $A$ the adjunction
$$\xymatrix{\cC \ar@/^5pt/[r] & \cC_A \ar@/^5pt/[l] }$$
splits. 
Indeed, the splitting of the adjuction map $M\otimes A\to M$ is given by the projector $M\otimes A\to M\otimes A$:
$$\xymatrix{M\otimes A \ar[r]^-{I\ot\epsilon} & M\otimes A^{\otimes 3} \ar[r]^-{I\ot\mu\ot I} & M\otimes A^{\otimes 2} \ar[r]^-{\nu\ot I} & M\otimes A.}$$

For a separable algebra $A$ the effect on morphisms $\cC_A(M,N)\to\cC(M,N)$ of the forgetful functor $\cC_A\to\cC$ has a splitting $P:\cC(M,N)\to\cC_A(M,N)$:
$$P(f) = f,\quad f\in \cC_A(M,N).$$
For $f\in\cC(M,N)$ the image $P(f)$ is defined as the composition
$$\xymatrix{M \ar[r]^-{I\ot\epsilon} & M\otimes A^{\otimes 2} \ar[r]^-{\nu_M\ot I} & M\otimes A \ar[r]^-{f\ot I} & N\otimes A \ar[r]^-{\nu_N} & N.}$$
Moreover, the splitting has the properties 
$$P(fg)=fP(g)\ P(gh)=P(g)h,\quad f,h\in Mor\,(\cC_A),\ g\in Mor\,(\cC ).$$ 
This gives {\em Maschke's lemma} for separable algebras.

\begin{lemma}
Let $A$ be a separable algebra in a semi-simple rigid monoidal category $\cC$. Then the category $\cC_A$ of right
$A$-modules in $\cC$ is also semi-simple.
\end{lemma}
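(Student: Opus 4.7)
The plan is to deduce semi-simplicity of $\cC_A$ from that of $\cC$ by transporting splittings of short exact sequences through the projector $P$ constructed just above the statement. Concretely, since $\cC_A$ is $k$-linear with finite-dimensional hom spaces (both properties passing from $\cC$ via the faithful forgetful functor), it suffices to show that every monomorphism $i\colon M\hookrightarrow N$ in $\cC_A$ admits a retraction in $\cC_A$.

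First I would appeal to semi-simplicity of $\cC$: the underlying morphism of $i$ is a monomorphism in $\cC$, so there exists $r\in\cC(N,M)$ with $r\circ i = 1_M$ in $\cC$. Then I would apply the splitting $P\colon\cC(N,M)\to\cC_A(N,M)$ defined just above, obtaining a module morphism $\widetilde{r}:=P(r)\in\cC_A(N,M)$. Using the two properties of $P$ recorded in the excerpt, namely $P(gh)=P(g)h$ whenever $h$ is a module morphism and $P(f)=f$ whenever $f$ is already a module morphism, one computes
\[
\widetilde{r}\circ i \;=\; P(r)\circ i \;=\; P(r\circ i) \;=\; P(1_M) \;=\; 1_M,
\]
so $\widetilde{r}$ is the desired retraction in $\cC_A$.

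Thus every mono in $\cC_A$ splits, which together with the $k$-linear abelian structure inherited from $\cC$ (kernels and cokernels of module maps exist because the projector $P$ lets us lift the corresponding constructions from $\cC$, and every object is a quotient of a free module) gives semi-simplicity of $\cC_A$. The only place where separability is used is in guaranteeing that the projector $P$ exists in the first place; once $P$ is in hand, the argument is the standard averaging trick behind classical Maschke's theorem, with the non-degenerate pairing $e$ playing the role of the averaging idempotent. The step most likely to need care is the verification that $P$ really intertwines composition on both sides with module morphisms, but this was already set down in the preceding discussion, so at this stage the proof reduces to assembling the displayed identity above.
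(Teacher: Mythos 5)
Your proof is correct and follows exactly the route the paper intends: the paper establishes the splitting $P$ with the properties $P(f)=f$ for module maps and $P(gh)=P(g)h$, and then simply asserts "this gives Maschke's lemma"; your displayed computation $P(r)\circ i = P(r\circ i)=P(1_M)=1_M$ is precisely the omitted final step. No substantive differences.
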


\subsection{Commutative algebras and local modules}\label{coal}

Now let $\cC$ be a braided monoidal category with the braiding $c_{X,Y}:X\otimes Y\to Y\otimes X$ (see \cite{js} for the definition). 
An algebra $A$ in $\cC$ is {\em commutative} if $\mu c_{A,A} = \mu$. 

It was shown in \cite{pa} that the category ${_A}{\cC}$ of left modules over a commutative algebra $A$ is monoidal with respect to the tensor product $M\otimes_AN$ over $A$, which can be defined by a coequaliser
$$\xymatrix{M\otimes_AN & M\otimes N \ar[l] && M\otimes A\otimes N \ar@/^5pt/[ll]^{(\nu_M1)(c_{M,A}1)} \ar@/_5pt/[ll]_{1\nu_N} }.$$

Moreover for commutative algebra $A$ the free functor $\cC\to\cC_A$ is (strong) monoidal which means the multiplication in $A$ induces an isomorphism
$$(X\otimes A)\otimes_A(Y\otimes A) \to (X\otimes Y)\otimes A.$$

A (right) module $(M,\nu)$ over a commutative algebra $A$ is {\em local} iff the diagram
$$\xymatrix{M\otimes A  \ar[r]^\nu \ar[d]_{c_{M,A}} & M\\ A\otimes M \ar[r]^{c_{A,M}} & M\otimes A \ar[u]_\nu}$$
commutes. Denote by $\cC_A^{loc}$ the full subcategory of $\cC_A$ consisting of local modules. The following result was
established in \cite{pa}.
\begin{proposition}
The category $\cC_A^{loc}$ is a full monoidal subcategory of $\cC_A$. Moreover, the braiding in $\cC$ induces a braiding in
$\cC_A^{loc}$.
\end{proposition}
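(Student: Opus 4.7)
The plan is to establish two things in sequence: first, that $\cC_A^{loc}$ is closed under the tensor product $\otimes_A$ inherited from $\cC_A$, so that it is genuinely a monoidal subcategory; and second, that the braiding $c_{M,N}$ of $\cC$ descends through the quotient $N\otimes M \twoheadrightarrow N\otimes_A M$ to a well-defined morphism $\tilde c_{M,N}: M \otimes_A N \to N \otimes_A M$ satisfying the braiding axioms.

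For closure, given $M, N \in \cC_A^{loc}$ equipped with the induced $A$-action on $M \otimes_A N$, I would verify the locality square by expanding $c_{A,\, M \otimes_A N}$ and $c_{M \otimes_A N,\, A}$ via the hexagon axioms into braidings acting separately through the $M$- and $N$-tensor factors. The double braiding of $A$ past $M \otimes_A N$ then factors through $c_{A,M}c_{M,A}$ and $c_{A,N}c_{N,A}$, each of which is absorbed by the locality of $M$ and $N$ individually once composed with the action. The string-diagrammatic version of this computation is transparent.

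For the descent of the braiding, the crux is to verify that the composite $M \otimes N \xrightarrow{c_{M,N}} N \otimes M \twoheadrightarrow N \otimes_A M$ coequalises the parallel pair $(\nu_M \otimes 1)(c_{M,A} \otimes 1)$ and $1 \otimes \nu_N$ that defines $M \otimes_A N$. Moving the copy of $A$ past $M$ by naturality of the braiding produces an expression in which $A$ acts on $N$ through the ``wrong'' side; at this point the commutativity of $A$ together with the locality of $N$ identifies the two legs after passing to the quotient. The universal property of the coequaliser then delivers $\tilde c_{M,N}$, and its inverse is constructed symmetrically. The $A$-linearity of $\tilde c_{M,N}$ follows from naturality of $c$ together with locality, and the naturality and hexagon identities for $\tilde c$ are obtained by chasing the corresponding identities for $c$ in $\cC$ through the epimorphisms $X \otimes Y \twoheadrightarrow X \otimes_A Y$, using that the associator and unitor of $\cC_A$ are descended from those of $\cC$.

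The main obstacle is precisely this coequalisation check: it is the locality condition that allows $c_{M,N}$ to pass to the $A$-quotient at all. Without locality, the braiding would intertwine each $A$-action with its braiding-conjugate rather than with itself, and the two legs of the defining coequaliser would not become equal after post-composition with $N \otimes M \twoheadrightarrow N \otimes_A M$. Once this descent is secured, the remaining verifications reduce to formal manipulations already available in $\cC$ together with the universal properties of the quotients.
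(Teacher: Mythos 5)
The paper does not actually prove this proposition---it is quoted from Pareigis \cite{pa}---so there is no in-paper argument to compare against; your sketch is the standard proof and is essentially correct. Two small points worth making explicit: in the closure step, the $A$-action on $M\otimes_A N$ passes through only one tensor factor, so to absorb the double braiding on the \emph{other} factor you must first invoke the coequaliser relation (which lets the action migrate between the two factors) before the locality of that factor applies---locality alone, factor by factor, is not quite enough as stated; and for ``monoidal subcategory'' one should also record that the unit object $A$ is itself local, which is immediate from $\mu c_{A,A}=\mu$. With those additions your outline, including the key coequalisation check for the descent of $c_{M,N}$ to $\tilde c_{M,N}\colon M\otimes_A N\to N\otimes_A M$, is exactly the argument of \cite{pa}.
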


The following statement was proved in \cite{ffrs}.
\begin{proposition}\label{algloc}
Let $(A,m,i)$ be a commutative algebra in a braided category $\cC$. Let $B=(B,\mu,\iota)$ be an algebra in ${\cC}_{A}$.
Define $\overline\mu$ and $\overline\iota$ as compositions $$\xymatrix{B\otimes B \ar[r] & B\otimes_AB \ar[r]^\mu & B,
& & & 1 \ar[r]^i & A \ar[r]^\iota & B.}$$ Then $\overline B=(B,\overline\mu,\overline\iota)$ is an algebra in $\cC$.
\newline
The map $\iota:A\to B$ is a homomorphism of algebras in $\cC$.
\newline
The algebra $\overline B$ in $\cC$ is separable or commutative if and only if the algebra
$B$ in ${\cC}_{A}$ is such.
\newline
The functor $(\cC_A^{loc})_B^{loc}\to \cC_{\overline B}^{loc}$
\begin{equation}\label{locloc}
(M,m:B\otimes_A M\to M)\mapsto (M,\overline m:B\otimes M\to B\otimes_A M\stackrel{m}{\to} M)
\end{equation}
is a braided monoidal equivalence.
\end{proposition}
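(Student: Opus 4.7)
The plan is to verify the four assertions in sequence, exploiting throughout that the quotient map $q_{X,Y}: X\otimes Y \to X\otimes_A Y$ is an epimorphism (in fact split, since $A$ is separable) and is the coherence constraint for the strong monoidal free functor $\cC \to \cC_A$. First, to show $\overline B = (B,\overline\mu,\overline\iota)$ is an algebra in $\cC$, I would rewrite the associativity and unit diagrams for $\overline\mu$ in terms of the associativity and unit diagrams for $\mu$ in $\cC_A$ by inserting the appropriate quotient maps $q_{B,B}$ and $q_{B,B\otimes B}$, etc.; these reductions work because $\mu$ is, by definition, a map out of $B\otimes_A B$, and because the $A$-module structure on $B$ is respected.

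For the second claim, the unit axiom $\overline\mu(\overline\iota\otimes 1) = 1$ is forced by $\iota \circ i$ being the unit map of the $A$-module $B$. The homomorphism property $\iota \circ m = \overline\mu(\iota\otimes\iota)$ reduces, after composing with the epimorphism $q_{A,A}$, to the statement that the multiplication on $A$ equals the $A$-action on $A$, combined with $\iota$ being $A$-linear. For the third claim, commutativity of $\overline\mu$ is equivalent to commutativity of $\mu$ modulo the coequalizer, since the braiding $c_{B,B}$ descends to $c^A_{B,B}$ on $B\otimes_A B$; similarly, a separability splitting $\kappa: A \to B\otimes_A B$ in $\cC_A$ lifts to $\overline\kappa: I \xrightarrow{i} A \xrightarrow{\kappa} B\otimes_A B \hookleftarrow B\otimes B$ (using the splitting of $q_{B,B}$ provided by separability of $A$), and the separability identities transport across.

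The substantive part is the equivalence (\ref{locloc}). To construct a quasi-inverse, I would start with a local $\overline B$-module $(M,\overline m: \overline B\otimes M \to M)$ in $\cC$ and first restrict the action along $\iota: A \to \overline B$ to obtain an $A$-action $\overline m(\iota\otimes 1): A\otimes M \to M$. The local condition for $M$ over $\overline B$, applied to the subalgebra $A$ together with the commutativity of $A$, forces this $A$-action to be local in $\cC$, placing $M$ in $\cC_A^{loc}$. Locality of $M$ as an $A$-module is exactly what is needed for $\overline m$ to coequalize the two maps $B\otimes A\otimes M \rightrightarrows B\otimes M$ defining $B\otimes_A M$, so $\overline m$ factors as $B\otimes M \twoheadrightarrow B\otimes_A M \xrightarrow{m} M$, and one checks that $m$ gives $M$ the structure of a $B$-module in $\cC_A^{loc}$, local with respect to the braiding $c^A$ on $\cC_A^{loc}$. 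The assignments in the two directions are inverse to each other essentially tautologically, because going from $m$ to $\overline m$ and back recovers $m$ via the universal property of the coequalizer.

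Finally, one checks that the functor is strong monoidal and braiding preserving. Both tensor products $\otimes_B$ (inside $\cC_A^{loc}$) and $\otimes_{\overline B}$ (inside $\cC$) are defined as coequalizers of parallel pairs built from the module actions and the relevant braiding; comparing the two coequalizers and using that $B\otimes_A M \twoheadleftarrow B\otimes M$ is an epimorphism, they coincide on the underlying objects, and the monoidal unit $A$ on the left corresponds to $\overline A = A$ viewed as an $\overline B$-module via $\iota$. The braiding match is automatic since both are induced from $c$ in $\cC$. The main technical obstacle I expect is organizing the coequalizer diagrams carefully enough that the locality hypothesis is visibly what makes the factorizations exist, rather than becoming tangled in several layers of quotients at once; I would therefore treat the underlying object assignment and the $A$-module lift before verifying the $B$-module structure on top of it.
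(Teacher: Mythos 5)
The paper does not prove this proposition: it is quoted from Fr\"ohlich--Fuchs--Runkel--Schweigert \cite{ffrs}, so there is no in-paper argument to compare yours against. Your outline is the standard one and is essentially sound: reduce the algebra axioms for $\overline\mu$ along the epimorphisms $q_{X,Y}:X\otimes Y\to X\otimes_AY$, recover the $A$-action on a local $\overline B$-module by restricting along $\iota$, and observe that locality over $A$ is exactly the condition that lets $\overline m$ descend to $B\otimes_AM$ --- that last point is the heart of the fourth assertion and you place it correctly (one checks it by comparing $\overline m(1_B\ot\nu_M)$ with its composite with the double braiding of $A$ and $M$, using associativity of $\overline m$ over $\overline\mu$ and the fact that the right $A$-action on $B$ is $\overline\mu(1_B\ot\iota)$ up to braiding).

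Two slips are worth fixing. First, the proposition as stated does not assume $A$ separable, yet your transfer of separability from $B$ to $\overline B$ lifts $\kappa_A:A\to B\otimes_AB$ through a section of $q_{B,B}$ ``provided by separability of $A$''; without that hypothesis such a section need not exist, so you should either add the hypothesis (harmless for every application in this paper, where $A$ is always separable) or produce the copairing $I\to B\otimes B$ by another route. Second, the monoidal unit of $(\cC_A^{loc})_B^{loc}$ is $B$ regarded as a module over itself, not $A$, and ``$A$ viewed as a $\overline B$-module via $\iota$'' is not meaningful --- restriction along $\iota:A\to B$ turns $B$-modules into $A$-modules, not conversely; the correct statement is simply that the unit $B$ is sent to $\overline B$. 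Neither slip affects the architecture of the argument, and the remaining work (the careful bookkeeping of iterated coequalizers identifying $\otimes_B$ with $\otimes_{\overline B}$) is exactly the part you flag as the technical burden; that is indeed where the bulk of the verification in \cite{ffrs} lies.
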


A commutative algebra $A$ in a ribbon category $\cC$ (with the ribbon twist $\theta$)  is called {\em ribbon} if $\theta_A=1_A$. 

The next theorem is a part of theorem 4.5 from
\cite{KiO}.
\begin{theorem}\label{chiext}
Let $A$ be an indecomposable separable commutative, ribbon algebra in a modular category $\cC$. Then
$\cC_A^{loc}$ is a modular category. 
\end{theorem}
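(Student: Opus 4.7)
The plan is to verify the three defining properties of a modular category for $\cC_A^{loc}$: that it is fusion, that it carries a ribbon structure, and that this structure satisfies the non-degeneracy condition. The braided monoidal structure is already provided by the preceding proposition, so the work splits naturally into building the premodular structure and then proving modularity.

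First I would establish that $\cC_A^{loc}$ is fusion. Semi-simplicity comes directly from Maschke's lemma for separable algebras (as stated in the excerpt): $\cC_A$ is semi-simple, and $\cC_A^{loc}$ is closed under direct summands (since locality is a property of the action, preserved by summands), hence is semi-simple. Finiteness of $Irr(\cC_A^{loc})$ follows from the fact that every simple $A$-module appears as a summand of a free module $X\otimes A$ for some simple $X\in\cC$, so $Irr(\cC_A)$ is finite, and a fortiori so is $Irr(\cC_A^{loc})$. Rigidity of $\cC_A$ is constructed from the rigidity of $\cC$ together with the separable algebra structure: the dual of a module $M$ is $M^\ast$ equipped with an $A$-action built from the braiding, commutativity, and the separability element $\kappa$. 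This rigidity descends to $\cC_A^{loc}$ because duals of local modules are local.

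Next I would check that $\cC_A^{loc}$ is ribbon. Given $M\in\cC_A^{loc}$, I claim the twist $\theta_M:M\to M$ from $\cC$ is a morphism of $A$-modules. Using naturality of $\theta$ applied to the action map $\nu:M\otimes A\to M$, and the identity $\theta_{M\otimes A}=(\theta_M\otimes\theta_A)\circ c_{A,M}\circ c_{M,A}$, the assumption $\theta_A=1_A$ together with the local module condition $c_{A,M}\circ c_{M,A}=1_{M\otimes A}$ gives $\theta_{M\otimes A}=\theta_M\otimes 1_A$, and hence $\nu\circ(\theta_M\otimes 1_A)=\theta_M\circ\nu$. Compatibility of this twist with duals and the inherited braiding follows from the analogous compatibilities in $\cC$.

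The main obstacle is the non-degeneracy (modularity) condition. My approach is to show the M\"uger center $Z_2(\cC_A^{loc})$ is trivial. Let $N\in\cC_A^{loc}$ be a simple object satisfying $c_{L,N}c_{N,L}=1$ for every $L\in\cC_A^{loc}$. The plan is to show that the underlying object of $N$ in $\cC$ is transparent to a distinguished generating family of $\cC_A^{loc}$, transfer this through the adjunction between the free and forgetful functors, and then invoke the fact that $\cC$ itself has trivial M\"uger center (since $\iota:\cC\boxtimes\overline\cC\to\cZ(\cC)$ is an equivalence) to conclude that $N$ must be supported on the unit component, i.e., a summand of $A^{\oplus k}$. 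Indecomposability of $A$, which gives $\mathrm{End}_{\cC_A}(A)=k$, then forces $N\cong A$. The technical subtlety, and what makes this the hard step, is that the free functor $\cC\to\cC_A$ does not in general land in $\cC_A^{loc}$, so the transfer of transparency requires the dimension identity $\dim(\cC_A^{loc})\cdot\dim(A)^2=\dim(\cC)$ (provable directly from separability and commutativity of $A$) to be pushed through carefully — essentially one shows that the S-matrix of $\cC_A^{loc}$ satisfies a factorization of the S-matrix of $\cC$, so non-degeneracy descends from non-degeneracy of $\tilde S$ for $\cC$.
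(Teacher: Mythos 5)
This theorem is not proved in the paper at all: it is quoted verbatim as part of Theorem 4.5 of \cite{KiO}, so there is no in-paper argument to compare yours against. Judged on its own, your outline of the fusion and ribbon parts is the standard one and is essentially correct. One imprecision: for a local module $M$ the double braiding $c_{A,M}c_{M,A}$ is \emph{not} the identity on $M\otimes A$ (locality only says it is absorbed by the action, $\nu\, c_{A,M}c_{M,A}=\nu$; already for the diagonal algebra in $\fib_u\boxtimes\fib_{u^{-1}}$ the double braiding acts nontrivially on summands of $A\otimes A$). Your conclusion that $\theta_M$ is an $A$-module map still holds, but the correct derivation uses naturality to slide $\theta_M\otimes 1_A$ past the double braiding and only then applies locality after post-composing with $\nu$.

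The genuine gap is in the modularity step. The ``transparency transfer'' you propose cannot work as stated: if $N$ is transparent in $\cC_A^{loc}$, you cannot conclude that its underlying object is transparent in $\cC$, because the underlying objects of local modules do not generate $\cC$ (in the extreme case of a trivialising algebra, $\cC_A^{loc}\simeq\Vect$ and the only local module is $A$ itself, whose underlying object sees nothing of $\cC$). So the triviality of the M\"uger centre of $\cC$ cannot be invoked directly. Moreover, the dimension identity $\dim(\cC_A^{loc})\dim(A)^2=\dim(\cC)$, which you describe as ``provable directly from separability and commutativity,'' is in fact equivalent in difficulty to the non-degeneracy statement itself: it fails when $\cC$ is not modular (for a symmetric $\cC$ all modules are local and one only gets $\dim(\cC_A)=\dim(\cC)/\dim(A)$), so its proof must use modularity of $\cC$ in an essential way. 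The content of the Kirillov--Ostrik proof is precisely the explicit computation expressing the $\tilde S$-matrix of $\cC_A^{loc}$ through that of $\cC$ via the induction/restriction adjunction and the separability idempotent; your last sentence gestures at this but treats it as a technical afterthought rather than as the core of the argument. An alternative route, closer to \cite{dmno} and \cite{ffrs}, is the braided equivalence $\cZ(\cC_A)\simeq\cC\boxtimes\overline{\cC_A^{loc}}$, from which both the dimension formula and non-degeneracy of $\cC_A^{loc}$ fall out; either way, this step needs to be carried out, not assumed.
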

We call a commutative, separable, indecomposable, ribbon algebra $A\in\cC$ {\em trivialising} (or {\em Lagrangian} in the terminology of \cite{dmno}) if $\cC_A^{loc}$ is equivalent to the category $\Vect$ of vector spaces over the base field (that is, the only simple local $A$-module is $A$ itself). 
For any modular category $\cC$ the category $\cC\boxtimes\overline{\cC}$ always has a trivialising algebra $Z$, which we call the {\em diagonal} algebra (the {\em tube} algebra of \cite{mu}) with underlying object
$$Z = \oplus_{X\in Irr(\cC)}X\boxtimes X^*.$$

\subsection{Fusion rules and modular data}

A set $R$ is called a \emph{fusion rule} if its integer span $\mathbb{Z}R$ is equipped with a structure of an associative unital ring such that the unit element of  $\mathbb{Z}R$ belongs to $R$ and 
$$r\cdot s \in \mathbb{Z}_{\geq 0}R$$
for any $r,s\in R$.
Here $ \mathbb{Z}_{\geq 0}R$ is the sub ring in $\mathbb{Z}R$ of linear combinations of elements of $R$ with non-negative coefficients.
We also require  $\mathbb{Z}R$ to satisfy a \emph{rigidity condition}.
\newline
To formulate the rigidity condition equip $\mathbb{Z}R$ with a symmetric bilinear form $(-,-)$ defined by
$$(r,s) = \delta_{r,s}\quad r,s\in R.$$
Note that for  $x\in\mathbb{Z}_{\geq 0}R$ we have
$$(x,x)=1 \Longleftrightarrow x\in R.$$
The rigidity condition is then the existence of an involution $(-)^{\ast}:R\to R$ such that
$$(r\cdot s,t) = (s, r^{\ast}t)\quad r,s,t\in R.$$

By \emph{a homomorphism of fusion rules} $R\to S$ we mean a map of sets which induces a homomorphism of rings $\mathbb{Z}R\to\mathbb{Z}S$.

If $R$ and $S$ are fusion rules then so is $R\times S$.
In particular $R^{\times\ell} = R^{\ell}$ has the structure of a fusion rule.

Let $\bc$ be a semi-simple rigid monoidal category.
Then the set $Irr(\bc)$ of isomorphism classes of simple objects in $\bc$ has the structure of a fusion rule.
Note that $\mathbb{Z} Irr(\bc)$ coincides with the Grothendieck ring $K_{0}(\bc)$.

The {\em categorical dimension} of $\cC$ is $\mbox{Dim}(\cC) = \sum_{X}d(X)^2$, where the sum is taken over isomorphism classes of simple objects $X$ of $\cC$. 
Here $d(X) = tr(I_X)$ is the dimension of $X$. 

We use the definition of the {\em multiplicative central charge} of a modular category $\cC$ given in \cite[Section~ 6.2]{DGNO}
$$\xi(\cC) = \frac{1}{\sqrt{\mbox{dim}(\cC)}}\sum_{X}\theta(X)d(X)^2,$$
(the sum again is taken over simple objects) where $\theta_X = \theta(X)1_X$ is the twist on a simple object $X\in\cC$. 
We take the positive square root $\sqrt{\mbox{dim}(\cC)}$ of the positive real cyclotomic number $\mbox{dim}(\cC)$. 

The following properties are well known, see for example \cite[Section~ 3.1]{BaKi}.

\begin{lemma} \label{xi}
\begin{enumerate}
\item[(i)] $\xi(\bc)$ is a root of unity;
\item[(ii)] $\xi(\bc_1\boxtimes \bc_2)=\xi(\bc_1)\xi(\bc_2)$;
\item[(iii)] $\xi(\overline\bc)=\xi(\bc)^{-1}$. 
\end{enumerate}
\end{lemma}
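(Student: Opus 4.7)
The plan is to treat items (ii) and (iii) by direct computation with the two Gauss sums
$$p^{\pm}(\bc):=\sum_{X\in Irr(\bc)}\theta(X)^{\pm 1}d(X)^2,$$
and to reduce (i) to a well-known finiteness theorem that I would cite rather than reprove.

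First I would handle (ii). The identification $Irr(\bc_1\boxtimes\bc_2)=Irr(\bc_1)\times Irr(\bc_2)$ comes with multiplicative dimensions $d(X\boxtimes Y)=d(X)d(Y)$ and multiplicative twists $\theta(X\boxtimes Y)=\theta(X)\theta(Y)$; consequently both $p^+$ and $\mbox{dim}$ factor as products over the two tensor factors, and since the positive square root respects products of positive reals, the quotient defining $\xi$ factors as well.

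Next, for (iii), I would observe that $\overline\bc$ shares the pivotal structure of $\bc$, and hence the categorical dimensions and the total $\mbox{dim}(\bc)$, but has inverted twist, giving $\xi(\overline\bc)\sqrt{\mbox{dim}(\bc)}=p^-(\bc)$. The statement then reduces to the Gauss sum identity $p^+(\bc)p^-(\bc)=\mbox{dim}(\bc)$. I would obtain this identity by computing the double sum $\sum_{X,Y}\tilde S_{X,Y}\theta(X)\theta(Y)d(X)d(Y)$ in two ways: once by expanding $\tilde S_{X,Y}=tr(c_{Y,X}c_{X,Y})$ via the ribbon axiom $\theta_{X\otimes Y}=(\theta_X\otimes\theta_Y)c_{Y,X}c_{X,Y}$ and collapsing it onto fusion coefficients, and once by exploiting the non-degeneracy of $\tilde S$ granted by modularity.

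The main obstacle is item (i). My plan is to cite, rather than reprove, two standard inputs: Vafa's theorem that every twist $\theta(X)$ on a simple object is a root of unity of some common finite order, and the Ng--Schauenburg theorem that the projective $SL_{2}(\mathbb Z)$-representation attached to a modular category has finite image. Since the anomaly of this representation is precisely $\xi(\bc)$, finiteness of the image forces $\xi(\bc)$ to have finite order, i.e.\ to be a root of unity. With these inputs in hand the proof of (i) is one line, and a reader wanting a self-contained account can be referred to \cite[Section~3.1]{BaKi}.
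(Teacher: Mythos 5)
First, a point of comparison: the paper does not prove this lemma at all; it is stated as ``well known'' with a pointer to \cite[Section~3.1]{BaKi}, so your sketch is really being measured against the standard textbook argument. Part (ii) is correct as written, and your reduction of (iii) to the Gauss sum identity $p^+(\cC)p^-(\cC)=\mbox{dim}(\cC)$ is exactly the right move.

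The genuine gap is in your proposed proof of that Gauss sum identity. The double sum $\sum_{X,Y}\tilde S_{X,Y}\theta(X)\theta(Y)d(X)d(Y)$ is symmetric in $X$ and $Y$ and evaluates to $\mbox{dim}(\cC)\,p^+(\cC)$ no matter how you compute it: expanding $\tilde S_{X,Y}=\theta(X)^{-1}\theta(Y)^{-1}\sum_Z N_{X,Y}^Z\theta(Z)d(Z)$ and using $\sum_{X,Y}N_{X,Y}^Z d(X)d(Y)=\mbox{dim}(\cC)d(Z)$ gives $\mbox{dim}(\cC)p^+(\cC)$, and any second evaluation returns the same number, so you obtain a tautology and never see $p^-$. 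The standard repair is to prove the one-variable identity $\sum_X\theta(X)d(X)\tilde S_{X,Y}=p^+(\cC)\,\theta(Y)^{-1}d(Y)$ together with its conjugate $\sum_X\theta(X)^{-1}d(X)\tilde S_{X,Y}=p^-(\cC)\,\theta(Y)d(Y)$ (obtained by re-indexing $X\mapsto X^*$ and using $\tilde S_{X^*,Y}=tr\bigl((c_{X,Y}c_{Y,X})^{-1}\bigr)$), compose them so that $\tilde S^2$ acts on the vector $\bigl(\theta(X)d(X)\bigr)_X$ by the scalar $p^+p^-$, and compare with $\tilde S^2=\mbox{dim}(\cC)\,C$, where $C$ is charge conjugation; this last step is where modularity actually enters. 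For (i), your appeal to Ng--Schauenburg is both heavier than needed and, as phrased, incomplete: the image of the projective representation does not determine the anomaly, so its finiteness alone says nothing about $\xi(\cC)$. With Vafa's theorem in hand the quickest finish is to take determinants in $(\mathrm{diag}(\theta)\,S)^3=\xi(\cC)\,S^2$ and use $S^4=1$ to conclude that $\xi(\cC)^{|Irr(\cC)|}$ is a root of unity; alternatively, $\xi(\cC)^2=p^+/p^-$ is a cyclotomic algebraic integer all of whose Galois conjugates have absolute value one, hence a root of unity by Kronecker.
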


Let $SL_2(\Z)$ be the \emph{modular group}; that is, the group of determinant 1 integer $2\times 2$-matrices. 
It is generated by the matrices
$$s = \left(\begin{array}{rr} 0 & -1\\ 1 & 0 \end{array}\right),\quad t = \left(\begin{array}{rr} 1 & 1\\ 0 & 1 \end{array}\right),$$
with the generating system of relations $s^4 = 1, (ts)^3 = s^2$.

Let $\cC$ be a modular category and define
$$S = (\sqrt{Dim(\cC)})^{-1}\tilde S,\quad T = \xi(\cC)^{-\frac{1}{3}}diag(\theta_X)$$ 
where $X$ runs through isomorphism classes of simple objects of $\cC$ and $\tilde S$ is the matrix defined in section (1.1).
The pair of matrices $S,T$ is often referred to as the {\em modular data} of $\cC$. 
The proof of the following result can be found in \cite{tu}.

\begin{theorem}
Let $\cC$ be a modular category. 
Then the operators $S$ and $T$ define an action of the modular group $SL_2(\Z)$ on the  complexified Grothendieck group $K_0(\cC)\otimes\C$.
\end{theorem}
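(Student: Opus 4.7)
The plan is to verify the two defining relations of $SL_2(\Z)$, namely $S^4 = 1$ and $(TS)^3 = S^2$, directly from identities relating the un‑normalized $\tilde S$, the twists $\theta$, the categorical dimension $\mbox{Dim}(\cC)$, and the central charge $\xi(\cC)$. All these identities are standard consequences of modularity (non‑degeneracy of $\tilde S$, naturality of the braiding, and the ribbon axiom); the proof essentially amounts to checking that the normalizations chosen in the definitions of $S$ and $T$ conspire to absorb the scalar factors appearing in the un‑normalized identities.

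First I would record the two inputs. The first is the identity
\[
\tilde S^2 \;=\; \mbox{Dim}(\cC)\, C,
\]
where $C$ is the charge conjugation permutation $C_{X,Y}=\delta_{X,Y^*}$, which follows from the non‑degeneracy pairing arising from modularity (proved e.g.\ in Turaev's book, cited in the excerpt). Squaring gives $C^2=1$, hence $\tilde S^4=\mbox{Dim}(\cC)^2$, so after dividing by $\sqrt{\mbox{Dim}(\cC)}$ we obtain $S^2=C$ and $S^4=1$. The second input is the Anderson--Moore--Vafa type identity
\[
\bigl(\tilde S\,\mathrm{diag}(\theta_X)\bigr)^3 \;=\; p^+\,\tilde S^2,\qquad p^+:=\sum_{X\in Irr(\cC)}\theta(X)\,d(X)^2,
\]
which again is a consequence of the ribbon and braiding axioms together with the formula for the trace of the double braiding.

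Next I would assemble the relation $(TS)^3=S^2$. By definition of $\xi(\cC)$ in the excerpt we have $p^+ = \xi(\cC)\sqrt{\mbox{Dim}(\cC)}$. Substituting the normalizations $S = \tilde S/\sqrt{\mbox{Dim}(\cC)}$ and $T = \xi(\cC)^{-1/3}\mathrm{diag}(\theta_X)$ gives
\[
(TS)^3 \;=\; \xi(\cC)^{-1}\,\mbox{Dim}(\cC)^{-3/2}\bigl(\mathrm{diag}(\theta_X)\tilde S\bigr)^3
\;=\;\xi(\cC)^{-1}\,\mbox{Dim}(\cC)^{-3/2}\,p^+\,\tilde S^2,
\]
where we use that $\mathrm{diag}(\theta_X)$ commutes with $\tilde S^2=\mbox{Dim}(\cC)\,C$ (since the twist is constant on isomorphism classes and $C$ only permutes $X\leftrightarrow X^*$, while $\theta_X=\theta_{X^*}$ by the ribbon axiom). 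Plugging in $p^+ = \xi(\cC)\sqrt{\mbox{Dim}(\cC)}$ and $\tilde S^2=\mbox{Dim}(\cC)\,C$ yields
\[
(TS)^3 \;=\; \mbox{Dim}(\cC)^{-1}\tilde S^2 \;=\; C \;=\; S^2,
\]
as desired. Together with $S^4=1$ this exhibits a representation of $SL_2(\Z)$ on $K_0(\cC)\otimes\C$.

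The only non‑routine point is justifying the two ``input'' identities $\tilde S^2 = \mbox{Dim}(\cC)\,C$ and $(\tilde S\,\mathrm{diag}(\theta))^3 = p^+\tilde S^2$, which is where most of the content of the theorem actually lies. Since the paper explicitly cites Turaev's book for this, I would simply quote these as established facts and present the proof as the normalization computation above; the main obstacle, were one to prove everything from scratch, would be the cubic identity, whose derivation goes through a careful graphical calculus argument using the ribbon structure and the non‑degeneracy of $\tilde S$.
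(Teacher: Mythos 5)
Your argument is correct, and it is essentially the route the paper takes: the paper offers no proof of its own, simply citing Turaev's book, and the substantive content (the identities $\tilde S^2=\mbox{Dim}(\cC)\,C$ and $(\tilde S\,\mathrm{diag}(\theta))^3=p^+\tilde S^2$) is exactly what that citation supplies. Your explicit bookkeeping showing that the normalizations $S=\tilde S/\sqrt{\mbox{Dim}(\cC)}$ and $T=\xi(\cC)^{-1/3}\mathrm{diag}(\theta_X)$ absorb the scalars $\mbox{Dim}(\cC)$ and $p^+$ (via $p^+=\xi(\cC)\sqrt{\mbox{Dim}(\cC)}$ and $\theta_X=\theta_{X^*}$) is accurate and is a useful supplement to the bare citation.
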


The following is the second part of theorem 4.5 from
\cite{KiO}.
\begin{theorem}
The map $K_0(\cC_A^{loc})\otimes_\Z\C\to K_0(\cC)\otimes_\Z\C$, induced by the
forgetful functor $\cC_A^{loc}\to\cC$ is $SL_2(\Z)$-equivariant.
\end{theorem}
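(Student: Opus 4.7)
The plan is to verify the intertwining property of $F_{*}$ on each of the two generators $s$ and $t$ of $SL_{2}(\Z)$ separately.

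For $T$-equivariance, the ribbon structure on $\cC_A^{loc}$ is inherited from that of $\cC$, so for a simple local $A$-module $M$ the endomorphism $\theta_{F(M)}=\theta_{M}$ is a scalar multiple of the identity; consequently every simple summand $X$ of $F(M)$ in $\cC$ shares the same twist $\theta^{\cC}(X)=\theta^{\cC_A^{loc}}(M)$. Combined with the equality of multiplicative central charges $\xi(\cC_A^{loc})=\xi(\cC)$ -- a consequence of the Gauss sum expression $\xi(\cC)\sqrt{\dim\cC}=\sum_{X}\theta(X)d(X)^{2}$ together with the ribbon condition $\theta_A=1_A$ -- this gives $F_{*}\circ T^{\cC_A^{loc}}=T^{\cC}\circ F_{*}$.

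For $S$-equivariance the key step is to establish the formula
$$S^{\cC_A^{loc}}_{M,N}\;=\;\frac{1}{\dim_{\cC}(A)}\sum_{X,Y\in Irr(\cC)}\bigl[F(M):X\bigr]\bigl[F(N):Y\bigr]\,S^{\cC}_{X,Y}$$
for simple local $A$-modules $M,N$. I would derive it by computing the double-braiding trace $\tilde S^{\cC_A^{loc}}_{M,N}=\mathrm{tr}_{\cC_A^{loc}}(c_{N,M}c_{M,N})$ on $M\otimes_A N$ and relating it to the corresponding trace $\tilde S^{\cC}_{F(M),F(N)}$ on $F(M)\otimes F(N)$ by means of the separability splitting of the coequaliser surjection $F(M)\otimes F(N)\twoheadrightarrow M\otimes_A N$, together with the trace identity $\mathrm{tr}_{\cC}(f)=\dim_{\cC}(A)\,\mathrm{tr}_{\cC_A^{loc}}(f)$ for endomorphisms $f$ of local modules. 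Locality of $M,N$ is what allows the braiding on $\cC_A^{loc}$ to descend from that of $\cC$, and the dimensional relation $\sqrt{\dim\cC}=\dim_{\cC}(A)\sqrt{\dim\cC_A^{loc}}$ enters in the normalisation.

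With the displayed formula in hand, the required matrix identity $\sum_{N}[F(N):Z]\,S^{\cC_A^{loc}}_{N,M}=\sum_{X}[F(M):X]\,S^{\cC}_{Z,X}$ follows by substitution, rearrangement of the resulting double sum, and the unitarity of $S^{\cC}$ (guaranteed by modularity of $\cC$) applied to the classes in the image of $F_{*}$. The main obstacle will be the derivation of the $S$-matrix formula itself: the separability idempotent of $A$ provides the required splitting of the coequaliser, the locality of $M,N$ makes the braiding well-defined on $\cC_A^{loc}$, and careful diagrammatic bookkeeping is needed to pin down the exact power of $\dim_{\cC}(A)$ in the normalisation factor.
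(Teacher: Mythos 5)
The paper does not actually prove this statement: it is quoted as the second half of Theorem 4.5 of \cite{KiO}, so your proposal has to be measured against the argument given there. Your $T$-equivariance step is sound in outline: since $\theta_A=1_A$, the twist $\theta_M$ of a local module is an $A$-module endomorphism, hence a single scalar shared by every simple $\cC$-constituent of $F(M)$, and $\xi(\cC_A^{loc})=\xi(\cC)$ follows from the Gauss-sum identities $\tau^{\pm}(\cC_A^{loc})=\tau^{\pm}(\cC)/\dim_\cC(A)$ combined with $\mathrm{Dim}(\cC_A^{loc})=\mathrm{Dim}(\cC)/\dim_\cC(A)^2$ (these do need proof, but they are standard and your sketch points at the right mechanism).

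The $S$-equivariance step contains a genuine gap. A minor point first: with the paper's normalised matrices $S=\tilde S/\sqrt{\mathrm{Dim}(\cC)}$ the correct relation is $S^{\cC_A^{loc}}_{M,N}=\sum_{X,Y}[F(M):X][F(N):Y]\,S^{\cC}_{X,Y}$ with \emph{no} prefactor; the $1/\dim_\cC(A)$ belongs to the unnormalised identity $\tilde S^{loc}_{M,N}=\tilde S^{\cC}_{F(M),F(N)}/\dim_\cC(A)$ and is cancelled by $\sqrt{\mathrm{Dim}(\cC)}/\sqrt{\mathrm{Dim}(\cC_A^{loc})}=\dim_\cC(A)$, exactly the relation you quote. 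The serious problem is that this bilinear formula only controls $\tilde S^{\cC}$ on pairs of objects lying in the image of $F_*$, whereas the intertwining relation $F_*\circ S^{loc}=S^{\cC}\circ F_*$ requires knowledge of $\tilde S^{\cC}_{X,F(M)}$ for an \emph{arbitrary} simple $X\in\cC$. In matrix form you are claiming that $B^{T}S^{\cC}B=S^{loc}$ implies $BB^{T}S^{\cC}B=S^{\cC}B$, and no amount of rearrangement or unitarity of $S^{\cC}$ yields this: already in the test case of a trivialising algebra, where $\cC_A^{loc}\simeq\Vect$ and $S^{loc}=(1)$, your hypothesis says only that $v=[A]$ satisfies $v^{T}S^{\cC}v=1$, while the conclusion says $S^{\cC}v=v$, and the former certainly does not imply the latter for a general class $v\in K_0(\cC)$. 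The missing ingredients are the induction functor $G$ (right adjoint of $F$, namely $X\mapsto X\otimes A$ followed by projection onto the local part), Frobenius reciprocity $[F(N):X]=[G(X):N]$, and the identity $\tilde S^{\cC}_{X,F(M)}=\dim_\cC(A)\,\tilde S^{loc}_{G(X),M}$ valid for every simple $X\in\cC$; this last identity is the actual content of the lemma in \cite{KiO} from which equivariance is read off. Your separability/coequaliser computation is the right tool, but it must be run with one leg an arbitrary object of $\cC$ rather than with both legs local modules.
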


\subsection{NIM-representations}\label{nimrepsec}

Let $R$ be a fusion rule.

\noindent A set $M$ is a {\em non-negative integer matrix} (or NIM-)representation of $R$ if $\mathbb{Z}M$ is equipped with a structure of an  $\mathbb{Z}R$-module such that 
$$r\cdot m \in \mathbb{Z}_{\geq 0}M\quad \forall r \in R,\ m\in M$$
and $\mathbb{Z}M$ also possesses the rigidity condition.

As before, to formulate the rigidity condition observe that $\mathbb{Z}R$ comes equipped with a symmetric bilinear form 
$$(m,n) = \delta_{m,n};\, m,n\in M.$$
Again, it is obvious that for all $m\in\mathbb{Z}_{\geq 0}M$ we have
$$(m,m)=1 \Longleftrightarrow r\in M.$$

The rigidity condition for NIM-representations is then
$$(r\cdot m,n) = (m, r^{\ast}n)\quad r\in R,\, m,n\in M.$$
Note that a fusion rule $R$ is always a NIM-representation of itself.

More generally a homomorphism of fusion rules $R\to S$ turns $S$ into a NIM-representation of $R$.

A \emph{morphism of NIM-representations} $N\to M$ is a map of sets inducing a $\mathbb{Z}R$-module homomorphism $\mathbb{Z}N\to\mathbb{Z}M$.

The following is a complete reducibility statement for NIM-representations of a rigid fusion rule.

\begin{lemma}\label{cr}
Let $R$ be a fusion rule and let $N \subset M$ be an embedding of NIM-representations of $R$.
Then $$M\setminus N = \{m\in M|\quad m\not\in N\}$$ is a NIM-subrepresentation of $M$ and $M = N \sqcup (M\setminus N)$.
\end{lemma}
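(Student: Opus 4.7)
The plan is to use the rigidity condition to show that the bilinear pairing $(-,-)$ forces the $R$-action to preserve the complement set-theoretically. More precisely, write $M\setminus N$ for the set of basis elements of $M$ not lying in $N$; the disjoint union $M = N \sqcup (M\setminus N)$ is then immediate at the level of sets, and induces an orthogonal decomposition $\mathbb{Z}M = \mathbb{Z}N \oplus \mathbb{Z}(M\setminus N)$ with respect to the form $(-,-)$. The real content is to verify that $\mathbb{Z}(M\setminus N)$ is stable under the $\mathbb{Z}R$-action, which together with the inherited rigidity pairing will give it the structure of a NIM-subrepresentation.

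First I would fix $r\in R$ and $m\in M\setminus N$ and expand $r\cdot m = \sum_{p\in M} c_p\, p$ with $c_p\in\mathbb{Z}_{\geq 0}$ (using that $M$ is a NIM-representation). The goal is to show $c_n = 0$ for every $n\in N$. Pick any $n\in N$. By rigidity of the NIM-representation $M$, we have
$$(r\cdot m,\, n) = (m,\, r^{*}\cdot n).$$
Since $N$ is a sub-NIM-representation, $r^{*}\cdot n$ lies in $\mathbb{Z}_{\geq 0}N$, so it is a non-negative integer combination $\sum_{n'\in N} a_{n'}\, n'$. Expanding the right-hand side,
$$(m,\, r^{*}\cdot n) = \sum_{n'\in N} a_{n'}\,(m,n') = 0,$$
because $m\notin N$ forces $(m,n') = \delta_{m,n'} = 0$ for all $n'\in N$. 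Hence $c_n = (r\cdot m,\, n) = 0$, and consequently $r\cdot m \in \mathbb{Z}_{\geq 0}(M\setminus N)$.

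Having established stability, the remaining axioms are immediate. The restriction of the bilinear form on $\mathbb{Z}M$ to $\mathbb{Z}(M\setminus N)$ still satisfies $(m,m')=\delta_{m,m'}$ for $m,m'\in M\setminus N$, so the characterisation $(x,x)=1 \Leftrightarrow x\in M\setminus N$ (for $x\in\mathbb{Z}_{\geq 0}(M\setminus N)$) is inherited. The rigidity identity $(r\cdot m,m') = (m, r^{*}\cdot m')$ likewise restricts from $M$ to $M\setminus N$. Thus $M\setminus N$ is a NIM-subrepresentation, and $M = N \sqcup (M\setminus N)$ as NIM-representations.

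There is no real obstacle here; the only thing to be careful about is the direction of the rigidity pairing argument — one must test the coefficient of $n\in N$ in $r\cdot m$ by pairing with $n$ on the right and then moving $r$ across to become $r^{*}$, rather than the other way around, so that the hypothesis that $N$ (and not $M\setminus N$) is closed under the $R$-action is the one being used.
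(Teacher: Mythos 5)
Your proof is correct and follows exactly the paper's argument: pair $r\cdot m$ with $n\in N$, use rigidity to move $r$ across as $r^{*}$, and conclude from $r^{*}\cdot n\in\mathbb{Z}_{\geq 0}N$ and the orthogonality $(M\setminus N,\,N)=0$ that the coefficient vanishes. The extra remarks on the inherited form and rigidity are fine but are left implicit in the paper.
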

\begin{proof}
Note that $ \mathbb{Z}_{\geq 0}(M\setminus N)$ can be identified with the orthogonal complement of $\mathbb{Z}_{\geq 0}N$ in $\mathbb{Z}_{\geq 0}M$. Now for $m\in M\setminus N$, $r\in R$ and $n\in N$ we have
$$(r\cdot m, n) = (m, r^{\ast}n) = 0$$
since $r^{\ast}n\in\mathbb{Z}_{\geq 0}N$ and $(M\setminus N, N) = 0$.
Thus, $R\cdot(M\setminus N) \subset  \mathbb{Z}_{\geq 0}(M\setminus N)$. 
\end{proof}

Let $\bm$ be a semi-simple module category over a semi-simple rigid monoidal category $\bc$.
Then $Irr(\bm)$ is a NIM-representation of the fusion rule $Irr(\bc)$.

Note that the rigidity property for NIM-representations follows  from the adjunction
$$\bm (X\ast M, N) \simeq \bm (M, X^{\vee}\ast N).$$

\section{Fibonacci categories}\label{secfib}

In this section we describe modular categories with the {\em Fibonacci} fusion rule
$$\fibfr = \{1,x\} : x^2=1+x.$$
That is we classify all possible associativity constraints (F-matrices) and braidings (B-matrices). Although not written explicitly anywhere the results are known to specialists. We present them here for the sake of completeness. 

We consider a semi-simple $k$-linear category $\fib$ with simple objects $I$ and $X$.
The tensor product is defined by
$$X\otimes X = I \oplus X\, .$$
Under this definition there are two fundamental hom-spaces $\fib(X^{2},X)$ and $\fib(X^{2},I)$ for which the two respective basis vectors are

\begin{equation*}
\xygraph{ !{0;/r4.0pc/:;/u4.0pc/::}
[]*+{\TreeTwo} [rr]*+{\TreeTwoid}
 } 
\end{equation*}

\subsection{Associativity}\label{fibassoc}

The only non-trivial component of the associativity constraint for $\fib$ is $$\alpha_{X,X,X}:(X\otimes X)\otimes X\to X\otimes(X\otimes X).$$ 
On the level of hom-spaces this corresponds to two isomorphisms 
$$\fib(\alpha_{X,X,X},I):\fib(X\otimes(X\otimes X),I)\to\fib((X\otimes X)\otimes X,I)$$ 
and 
$$\fib(\alpha_{X,X,X},X):\fib(X\otimes(X\otimes X),X)\to\fib((X\otimes X)\otimes X,X).$$
Clearly $dim(\fib(X^{3},I))=1$ and $dim(\fib(X^{3},X))=2$ and so the associativity $k$-linear transformation for $\fib(X^{3},X)$ is given by  $A\in GL_2(k)$ and by $\alpha\in k^{\ast}$ for $\fib(X^{3},I)$.
Graphically, 

\begin{equation*}\def\tbmarg{6pt}
 \renewcommand{\objectstyle}{\labelstyle}
\xygraph{ !{0;/r4.0pc/:;/u4.0pc/::}
[]*+<0pt,\tbmarg>{\TreeThreeid}="ta" ( [rrr]*+<0pt,\tbmarg>{\TreeThreeidrev}="tb" !{+L*!R\txt\small{\phantom{aa}$\alpha$\phantom{aa}}."tb"="tb"}  )
("ta" :@{|->} "tb")
 } 
\end{equation*}
\begin{equation}\label{assco}
\begin{array}{ccc}
\left[\begin{array}{c}
\def\tbmarg{6pt}
 \renewcommand{\objectstyle}{\labelstyle}
\xygraph{ !{0;/r4.0pc/:;/u4.0pc/::}
[]*+<0pt,\tbmarg>{\TreeThreeA}
[d]*+<0pt,\tbmarg>{\TreeThreeB}
}
\end{array}\right]
& 
\xymatrix{\ar@{|->}[rr]&&\quad A}
&
\left[\begin{array}{c}
\def\tbmarg{6pt}
 \renewcommand{\objectstyle}{\labelstyle}
\xygraph{ !{0;/r4.0pc/:;/u4.0pc/::}
[]*+<0pt,\tbmarg>{\TreeThreeArev}
[d]*+<0pt,\tbmarg>{\TreeThreeBrev} 
}
\end{array}\right]
\end{array}
\end{equation}

\noindent where 
$$A = \left(\begin{array}{ccc} a & b \\ c & d \end{array}\right)\, .$$

The pentagon axiom of associativity coherence gives a set of equations (on $\alpha$ and matrix elements of $A$) for both $\fib(X^{4},I)$ and $\fib(X^{4},X)$.

The $k$-vector space $\fib(X^{4},I)$ is two-dimensional and so there are two groups of equations, one for each choice of a basis tree:

\begin{equation*}\def\tbmarg{6pt}
 \renewcommand{\objectstyle}{\labelstyle}
\xygraph{ !{0;/r4.0pc/:;/u4.0pc/::}
[]*+<0pt,\tbmarg>{\TreeFouridAtopa}="t1"(
:@{|->}[d]*+<0pt,\tbmarg>{\TreeFouridAbota}="t1a" (  !{+L*!R\txt\scriptsize{$a$}."t1a"="t1a"},
[d]*+<0pt,\tbmarg>{\TreeFouridAbotaid}="t1b" (:@{}[u]|{+})
)
)
[rr]*+<0pt,\tbmarg>{\TreeFouridAtopb}="t2" (!{+L*!R+\txt\scriptsize{$\alpha$}."t2"="tt2"} ("t1" :@{|->} "tt2") ,
[d]*+<0pt,\tbmarg>{\TreeFouridAbotb}="t2a" ( !{+L*!R\txt\scriptsize{$\phantom{a.}\alpha a$}."t2a"="tt2a"},
[d]*+<0pt,\tbmarg>{\TreeFouridAbotbid}="t2b" (:@{}[u]|{+}, !{+L*!R\txt\scriptsize{$b$}."t2b"="t2b"})
("t1a" :@{|->} "tt2a")
)
)
[rr]*+<0pt,\tbmarg>{\TreeFouridAtopc}="t3" (!{+L*!R+\txt\scriptsize{$\alpha^{2}$}."t3"="t3"} ("t2" :@{|->} "t3"), 
:@{=}[d]*+<0pt,\tbmarg>{\TreeFouridAtopc}="t3a" ( !{+L*!R\txt\scriptsize{$\phantom{a.}\alpha a^{2} + bc$}."t3a"="tt3a"},
[d]*+<0pt,\tbmarg>{\TreeFouridAtopcid}="t3b" (:@{}[u]|{+}, !{+L*!R\txt\scriptsize{$\alpha ab +bd$}."t3b"="t3b"})
("t2a" :@{|->} "tt3a")
)
)
}
\end{equation*}

\begin{equation*}\def\tbmarg{6pt}
 \renewcommand{\objectstyle}{\labelstyle}
\xygraph{ !{0;/r4.0pc/:;/u4.0pc/::}
[]*+<0pt,\tbmarg>{\TreeFouridAtopaid}="t1"(
:@{|->}[d]*+<0pt,\tbmarg>{\TreeFouridAbota}="t1a" (  !{+L*!R\txt\scriptsize{$c$}."t1a"="t1a"},
[d]*+<0pt,\tbmarg>{\TreeFouridAbotaid}="t1b" (:@{}[u]|{+}, !{+L*!R\txt\scriptsize{$d$}."t1b"="t1b"})
)
)
[rr]*+<0pt,\tbmarg>{\TreeFouridAtopbid}="t2" (!{+L*!R+\txt\scriptsize{$$}."t2"="tt2"} ("t1" :@{|->} "tt2") ,
[d]*+<0pt,\tbmarg>{\TreeFouridAbotb}="t2a" ( !{+L*!R\txt\scriptsize{$\phantom{a.}\alpha c$}."t2a"="t2a"},
[d]*+<0pt,\tbmarg>{\TreeFouridAbotbid}="t2b" (:@{}[u]|{+}="p2" , !{+L*!R\txt\scriptsize{$d$}."t2b"="t2b"})
("t1a" :@{|->} "t2a")
)
)
[rr]*+<0pt,\tbmarg>{\TreeFouridAtopcid}="t3" (!{+L*!R+\txt\scriptsize{$$}."t3"="t3"} ("t2" :@{|->} "t3"), 
:@{=}[d]*+<0pt,\tbmarg>{\TreeFouridAtopc}="t3a" ( !{+L*!R\txt\scriptsize{$\phantom{a.}\alpha ca + dc$}."t3a"="t3a"},
[d]*+<0pt,\tbmarg>{\TreeFouridAtopcid}="t3b" (:@{}[u]|{+}, !{+L*!R\txt\scriptsize{$\alpha cb + d^{2}$}."t3b"="t3b"})
("t2a" :@{|->} "t3a")
)
)
}
\end{equation*}

\noindent which yields four equations
\begin{eqnarray*}
\alpha a^{2} + bc &=& \alpha^{2} \\
\alpha ab + bd &=& 0 \\
\alpha cb + d^{2} &=& 1 \\
\alpha ca + dc &=& 0
\end{eqnarray*}

There are then three calculations for $\fib(X^{4},X)$ each corresponding to a different initial basis-tree

\begin{equation*}\scalebox{0.85}{\def\tbmarg{6pt}
 \renewcommand{\objectstyle}{\labelstyle}
\xygraph{ !{0;/r4.0pc/:;/u4.0pc/::}
[]*+<0pt,\tbmarg>{\TreeFouridBtopa}="t1a"( 
:@{|->}[ddd]*+<0pt,\tbmarg>{\TreeFouridBbota}="t1b" (!{+L*!R+\txt\scriptsize{$a$}})  ( 
[d]*+<0pt,\tbmarg>{\TreeFouridBbotaid}="t1c" (!{+L*!R+\txt\scriptsize{$b$}})  :@{}[u]|{+}
)
) 
[rr]*+<0pt,\tbmarg>{\TreeFouridBtopb}="t2a" (!{+L*!R+\txt\scriptsize{$a$}."t2a"="tt2a"} ("t1a" :@{|->} "tt2a") )   (
"t2a"[d]*+<0pt,\tbmarg>{\TreeFouridBtopbid}="t2b" (!{+L*!R+\txt\scriptsize{$b$}}) :@{}[u]|{+} ( 
"t2a"[ddd]*+<0pt,\tbmarg>{\TreeFouridBbotb}="t2c" (!{+L*!R+\txt\scriptsize{$a^{2}$}="t2cl"}) (
"t2a"[dddd]*+<0pt,\tbmarg>{\TreeFouridBbotbid}="t2d" (!{+L*!R+\txt\scriptsize{$b$}}) :@{}[u]|{+} !{"t2c"."t2cl"="tt2c"} ("t1b" :@{|->} "tt2c") (
"t2a"[ddddd]*+<0pt,\tbmarg>{\TreeFouridBbotbidid}="t2e" (!{+L*!R+\txt\scriptsize{$ab$}}) :@{}[u]|{+}
)
)
)
)
[rr]*+<0pt,\tbmarg>{\TreeFouridBtopc}="t3a" (!{+L*!R+\txt\scriptsize{$a^{2}$}."t3a"="tt3a"}) ("tt2a" :@{|->} "tt3a") 
"t3a"[d]*+<0pt,\tbmarg>{\TreeFouridBtopcid}="t3b" (!{+L*!R+\txt\scriptsize{$b$}}) :@{}[u]|{+}  
"t3a"[dd]*+<0pt,\tbmarg>{\TreeFouridBtopcidb}="t3c" (!{+L*!R+\txt\scriptsize{$ab$}}) :@{}[u]|{+}  
"t3a"[ddd]*+<0pt,\tbmarg>{\TreeFouridBtopc}="t3d" ("t3d" :@{=} "t3c") (!{+L*!R+\txt\scriptsize{$a^{3} + cb$}."t3d"="t3dl"}) ("tt2c" :@{|->} "t3dl") 
"t3a"[dddd]*+<0pt,\tbmarg>{\TreeFouridBtopcid}="t3e" (!{+L*!R+\txt\scriptsize{$a^{2}b +bd$}}) :@{}[u]|{+}
"t3a"[ddddd]*+<0pt,\tbmarg>{\TreeFouridBtopcidb}="t3f" (!{+L*!R+\txt\scriptsize{$\alpha ab$}}) :@{}[u]|{+}
}
}\end{equation*}

and,

\begin{equation*}\def\tbmarg{6pt}
 \renewcommand{\objectstyle}{\labelstyle}
\xygraph{ !{0;/r4.0pc/:;/u4.0pc/::}
[]*+<0pt,\tbmarg>{\TreeFouridBtopaid}="t1a"( 
:@{|->}[ddd]*+<0pt,\tbmarg>{\TreeFouridBbota}="t1b" (!{+L*!R+\txt\scriptsize{$c$}})  ( 
[d]*+<0pt,\tbmarg>{\TreeFouridBbotaid}="t1c" (!{+L*!R+\txt\scriptsize{$d$}})  :@{}[u]|{+}
)
) 
[rr]*+<0pt,\tbmarg>{\TreeFouridBtopbidid}="t2a" (!{+L*!R+\txt\scriptsize{$$}."t2a"="tt2a"} ("t1a" :@{|->} "tt2a") )   (
"t2a"[ddd]*+<0pt,\tbmarg>{\TreeFouridBbotb}="t2c" (!{+L*!R+\txt\scriptsize{$ca$}="t2cl"}) (
"t2a"[dddd]*+<0pt,\tbmarg>{\TreeFouridBbotbid}="t2d" (!{+L*!R+\txt\scriptsize{$d$}}) :@{}[u]|{+} !{"t2c"."t2cl"="tt2c"} ("t1b" :@{|->} "tt2c") (
"t2a"[ddddd]*+<0pt,\tbmarg>{\TreeFouridBbotbidid}="t2e" (!{+L*!R+\txt\scriptsize{$cb$}}) :@{}[u]|{+}
)
)
)
[rr]*+<0pt,\tbmarg>{\TreeFouridBtopc}="t3a" (!{+L*!R+\txt\scriptsize{$c$}."t3a"="tt3a"}) ("tt2a" :@{|->} "tt3a")  (
"t3a"[d]*+<0pt,\tbmarg>{\TreeFouridBtopcidb}="t3b" (!{+L*!R+\txt\scriptsize{$d$}}) :@{}[u]|{+}  ( 
"t3a"[ddd]*+<0pt,\tbmarg>{\TreeFouridBtopc}="t3d" ("t3d" :@{=} "t3b") (!{+L*!R+\txt\scriptsize{$ca^{2} + cd$}."t3d"="t3dl"}) ("tt2c" :@{|->} "t3dl") (
"t3a"[dddd]*+<0pt,\tbmarg>{\TreeFouridBtopcid}="t3e" (!{+L*!R+\txt\scriptsize{$acb + d^{2}$}}) :@{}[u]|{+} (
"t3a"[ddddd]*+<0pt,\tbmarg>{\TreeFouridBtopcidb}="t3f" (!{+L*!R+\txt\scriptsize{$\alpha cb$}}) :@{}[u]|{+}
)
)
)
)
}
\end{equation*}

and,

\begin{equation*}\def\tbmarg{6pt}
 \renewcommand{\objectstyle}{\labelstyle}
\xygraph{ !{0;/r4.0pc/:;/u4.0pc/::}
[]*+<0pt,\tbmarg>{\TreeFouridBtopaidid}="t1a"( 
:@{|->}[ddd]*+<0pt,\tbmarg>{\TreeFouridBbotaid}="t1b" (!{+L*!R+\txt\scriptsize{$\alpha$}}) 
) 
[rr]*+<0pt,\tbmarg>{\TreeFouridBtopb}="t2a" (!{+L*!R+\txt\scriptsize{$c$}."t2a"="tt2a"} ("t1a" :@{|->} "tt2a") )   (
"t2a"[d]*+<0pt,\tbmarg>{\TreeFouridBtopbid}="t2b" (!{+L*!R+\txt\scriptsize{$d$}}) :@{}[u]|{+} ( 
"t2a"[ddd]*+<0pt,\tbmarg>{\TreeFouridBbotb}="t2c" (!{+L*!R+\txt\scriptsize{$\alpha c$}="t2cl"}) (
"t2a"[dddd]*+<0pt,\tbmarg>{\TreeFouridBbotbidid}="t2d" (!{+L*!R+\txt\scriptsize{$\alpha d$}}) :@{}[u]|{+} !{"t2c"."t2cl"="tt2c"} ("t1b" :@{|->} "tt2c") 
)
)
)
[rr]*+<0pt,\tbmarg>{\TreeFouridBtopc}="t3a" (!{+L*!R+\txt\scriptsize{$ca$}."t3a"="tt3a"}) ("tt2a" :@{|->} "tt3a")  
"t3a"[d]*+<0pt,\tbmarg>{\TreeFouridBtopcid}="t3b" (!{+L*!R+\txt\scriptsize{$cb$}}) :@{}[u]|{+}  
"t3a"[dd]*+<0pt,\tbmarg>{\TreeFouridBtopcidb}="t3c" (!{+L*!R+\txt\scriptsize{$d$}}) :@{}[u]|{+}  
"t3a"[ddd]*+<0pt,\tbmarg>{\TreeFouridBtopc}="t3d" ("t3d" :@{=} "t3c") (!{+L*!R+\txt\scriptsize{$\alpha ca$}."t3d"="t3dl"}) ("tt2c" :@{|->} "t3dl") 
"t3a"[dddd]*+<0pt,\tbmarg>{\TreeFouridBtopcid}="t3e" (!{+L*!R+\txt\scriptsize{$\alpha cb$}}) :@{}[u]|{+} 
"t3a"[ddddd]*+<0pt,\tbmarg>{\TreeFouridBtopcidb}="t3f" (!{+L*!R+\txt\scriptsize{$\alpha^{2} d$}}) :@{}[u]|{+}
}
\end{equation*}

\noindent These diagrams provide a further eight equations

\begin{eqnarray*}
a^{3}+bc &=& a^{2} \\
a^{2}b+bd &=& b \\
ca^{2}+cd &=& c \\
abc+d^{2} &=& 0 \\
\alpha ab &=& ab \\
\alpha cb &=& d \\
\alpha ca &=& ca \\
\alpha^{2}d &=& cb.
\end{eqnarray*}



Solving all the equations proves the following lemma.






\begin{lemma}\label{assoclemma} The associativity constraint for the Fibonacci category is given by (\ref{assco}), where
$$\alpha = 1,\quad\quad{A=\left(\begin{array}{cc} a & b \\ -ab^{-1} & -a\end{array}\right)}$$
with $a$ being a solution of $a^2=a+1$. 
\end{lemma}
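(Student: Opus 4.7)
The plan is to solve the twelve polynomial equations in $\alpha, a, b, c, d$ extracted from the pentagon identity, using the fact that $A$ is invertible (so $ad - bc \neq 0$) and $\alpha \neq 0$ as the main constraints.

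First I would rule out the degenerate cases $b = 0$ and $a = 0$. If $b = 0$, then the $\fib(X^{4},I)$-equation $\alpha cb + d^{2} = 1$ forces $d^{2} = 1$, while the $\fib(X^{4},X)$-equation $abc + d^{2} = 0$ forces $d^{2} = 0$, a contradiction. If $a = 0$, then $a^{3} + bc = a^{2}$ gives $bc = 0$; combined with $b \neq 0$ this forces $c = 0$, making $A$ singular. Hence $ab \neq 0$, and the equation $\alpha ab = ab$ immediately gives $\alpha = 1$.

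With $\alpha = 1$ in hand, the remaining equations simplify drastically. From $\alpha cb = d$ (equivalently $\alpha^{2} d = cb$) we read off $d = bc$; the equation $\alpha ab + bd = 0$ together with $b \neq 0$ gives $d = -a$, hence $c = -a b^{-1}$; and substituting $bc = d = -a$ into $\alpha a^{2} + bc = \alpha^{2}$ yields $a^{2} - a = 1$, i.e.\ $a^{2} = a + 1$.

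Finally I would verify that all remaining pentagon equations hold automatically once the substitutions $\alpha = 1$, $d = -a$, $bc = -a$ and $a^{2} = a + 1$ are made — each collapses to a trivial identity. The only step that requires any genuine thought is the exclusion of the $b = 0$ and $a = 0$ branches; after that the system becomes essentially linear and the remaining verifications are routine.
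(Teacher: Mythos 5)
Your proposal is correct and follows the same route as the paper, which simply states ``Solving all the equations proves the following lemma'' without showing the work; your explicit solution (ruling out $b=0$ and $a=0$ via invertibility of $A$, extracting $\alpha=1$ from $\alpha ab=ab$, then $d=-a$, $c=-ab^{-1}$ and $a^2=a+1$ from the remaining relations) is a valid way to carry out exactly that computation, and the leftover equations do all reduce to identities under these substitutions.
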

Note that $det(A)=-1$ and ${A^{-1}=A}$.

\subsection{Braiding}

The only non-trivial component of a braiding on $\fib$ is the isomorphism $c_{X,X}: X^{2}\to X^{2}$. 
On the level of hom-spaces $\fib (X^{2},1)$ and $\fib (X^{2},X)$ this isomorphism is given by
\begin{equation}\label{braid}\def\tbmarg{6pt}
 \renewcommand{\objectstyle}{\labelstyle}
\xygraph{ !{0;/r4.0pc/:;/u4.0pc/::}
[]*+{\TreeTwo}="t1" [rr]*+{\TreeTwo}="t2" (!{+L*!R+\txt\scriptsize{$u$}."t2"="t2"}) ("t1" :@{|->} "t2") 
[rr]*+{\TreeTwoid}="t3" [rr]*+{\TreeTwoid}="t4" (!{+L*!R+\txt\scriptsize{$w$}."t4"="t4"}) ("t3" :@{|->} "t4") ,
}
\end{equation}
where $w,u\in k^{\ast}$.

The coherence condition gives the following calculations

\begin{equation*}\def\tbmarg{6pt}
 \renewcommand{\objectstyle}{\labelstyle}
\xygraph{ !{0;/r4.0pc/:;/u4.0pc/::}
[]*++<0pt,\tbmarg>{\TreeThreeidrevs}="t1"
"t1"[urr]*+<0pt,\tbmarg>{\TreeThreeids}="t2" (!{+L*!R+\txt\scriptsize{$$}."t2"="t2"}) ("t1" :@{|->} "t2")
"t1"[urrrr]*+<0pt,\tbmarg>{\TreeThreeidrevs}="t3" (!{+L*!R+\txt\scriptsize{$w$}."t3"="t3"}) ("t2" :@{|->} "t3")
"t1"[urrrrrr]*+<0pt,\tbmarg>{\TreeThreeids}="t4" (!{+L*!R+\txt\scriptsize{$w$}."t4"="t4"}) ("t3" :@{|->} "t4") 
"t1"[drr]*+<0pt,\tbmarg>{\TreeThreeidrevs}="t5" (!{+L*!R+\txt\scriptsize{$u$}."t5"="t5"}) ("t1" :@{|->} "t5")
"t1"[drrrr]*+<0pt,\tbmarg>{\TreeThreeids}="t6" (!{+L*!R+\txt\scriptsize{$u$}."t6"="t6"}) ("t5" :@{|->} "t6")
"t1"[drrrrrr]*+<0pt,\tbmarg>{\TreeThreeids}="t7"  (!{+L*!R+\txt\scriptsize{$u^{2}$}."t7"="t7"}) ("t6" :@{|->} "t7")
("t4" :@{=} "t7")
}
\end{equation*}

\noindent and,

\begin{equation*}\def\tbmarg{6pt}
 \renewcommand{\objectstyle}{\labelstyle}
\xygraph{ !{0;/r4.0pc/:;/u4.0pc/::}
[]*++<0pt,\tbmarg>{\TreeThreeArevs}="t1"
"t1"[uurr]*+<0pt,\tbmarg>{\TreeThreeAs}="t2" (!{+L*!R+\txt\scriptsize{$a$}="tt2"})   (
[d]*+<0pt,\tbmarg>{\TreeThreeBs}="t2a" ("t2a" :@{}|{+} "t2") (!{"t2"."tt2"="t2"}) (!{+L*!R+\txt\scriptsize{$b$}."t2a"="t2a"}),
[ddd]*+<0pt,\tbmarg>{\TreeThreeArevs}="t2b" (!{+L*!R+\txt\scriptsize{$u$}."t2b"="t2b"}),
("t1" :@{|->} "t2") ("t1" :@{|->} "t2b") 
)  
"t1"[uurrrr]*+<0pt,\tbmarg>{\TreeThreeArevs}="t3" (!{+L*!R+\txt\scriptsize{$ua$}="tt3"})   (
[d]*+<0pt,\tbmarg>{\TreeThreeBrevs}="t3a" ("t3a" :@{}|{+} "t3") (!{"t3"."tt3"="t3"}) (!{+L*!R+\txt\scriptsize{$b$}."t3a"="t3a"}),
[ddd]*+<0pt,\tbmarg>{\TreeThreeAs}="t3b" (!{+L*!R+\txt\scriptsize{$ua$}="tt3b"}),
[dddd]*+<0pt,\tbmarg>{\TreeThreeBs}="t3c" ("t3c" :@{}|{+} "t3b") (!{"t3b"."tt3b"="t3b"}) (!{+L*!R+\txt\scriptsize{$ub$}."t3c"="t3c"})
("t2" :@{|->} "t3") ("t2b" :@{|->} "t3b")  
)  
"t1"[uurrrrrr]*+<0pt,\tbmarg>{\TreeThreeAs}="t4" (!{+L*!R+\txt\scriptsize{$ua^{2}+bc$}="tt4"})   (
[d]*+<0pt,\tbmarg>{\TreeThreeBs}="t4a" ("t4a" :@{}|{+} "t4") (!{"t4"."tt4"="t4"}) (!{+L*!R+\txt\scriptsize{$uab-ab$}="tt4a"}),
[ddd]*+<0pt,\tbmarg>{\TreeThreeAs}="t4b" ("t4a" :@{=} "t4b") (!{"t4a"."tt4a"="t4a"}) (!{+L*!R+\txt\scriptsize{$u^{2}a$}="tt4b"}),
[dddd]*+<0pt,\tbmarg>{\TreeThreeBs}="t4c" ("t4c" :@{}|{+} "t4b") (!{"t4b"."tt4b"="t4b"}) (!{+L*!R+\txt\scriptsize{$wub$}."t4c"="t4c"})
("t3" :@{|->} "t4") ("t3b" :@{|->} "t4b") 
)  
}
\end{equation*}

\noindent and,

\begin{equation*}\def\tbmarg{6pt}
 \renewcommand{\objectstyle}{\labelstyle}
\xygraph{ !{0;/r4.0pc/:;/u4.0pc/::}
[]*++<0pt,\tbmarg>{\TreeThreeBrevs}="t1"
"t1"[uurr]*+<0pt,\tbmarg>{\TreeThreeAs}="t2" (!{+L*!R+\txt\scriptsize{$c$}="tt2"})   (
[d]*+<0pt,\tbmarg>{\TreeThreeBs}="t2a" ("t2a" :@{}|{+} "t2") (!{"t2"."tt2"="t2"}) (!{+L*!R+\txt\scriptsize{$-a$}."t2a"="t2a"}),
[ddd]*+<0pt,\tbmarg>{\TreeThreeBrevs}="t2b" (!{+L*!R+\txt\scriptsize{$w$}."t2b"="t2b"}),
("t1" :@{|->} "t2") ("t1" :@{|->} "t2b") 
)  
"t1"[uurrrr]*+<0pt,\tbmarg>{\TreeThreeArevs}="t3" (!{+L*!R+\txt\scriptsize{$cu$}="tt3"})   (
[d]*+<0pt,\tbmarg>{\TreeThreeBrevs}="t3a" ("t3a" :@{}|{+} "t3") (!{"t3"."tt3"="t3"}) (!{+L*!R+\txt\scriptsize{$-a$}."t3a"="t3a"}),
[ddd]*+<0pt,\tbmarg>{\TreeThreeAs}="t3b" (!{+L*!R+\txt\scriptsize{$wc$}="tt3b"}),
[dddd]*+<0pt,\tbmarg>{\TreeThreeBs}="t3c" ("t3c" :@{}|{+} "t3b") (!{"t3b"."tt3b"="t3b"}) (!{+L*!R+\txt\scriptsize{$-wa$}."t3c"="t3c"})
("t2" :@{|->} "t3") ("t2b" :@{|->} "t3b")  
)  
"t1"[uurrrrrr]*+<0pt,\tbmarg>{\TreeThreeAs}="t4" (!{+L*!R+\txt\scriptsize{$cua-ac$}="tt4"})   (
[d]*+<0pt,\tbmarg>{\TreeThreeBs}="t4a" ("t4a" :@{}|{+} "t4") (!{"t4"."tt4"="t4"}) (!{+L*!R+\txt\scriptsize{$cub+a^{2}$}="tt4a"}),
[ddd]*+<0pt,\tbmarg>{\TreeThreeAs}="t4b" ("t4a" :@{=} "t4b") (!{"t4a"."tt4a"="t4a"}) (!{+L*!R+\txt\scriptsize{$uwc$}="tt4b"}),
[dddd]*+<0pt,\tbmarg>{\TreeThreeBs}="t4c" ("t4c" :@{}|{+} "t4b") (!{"t4b"."tt4b"="t4b"}) (!{+L*!R+\txt\scriptsize{$-w^{2}a$}."t4c"="t4c"})
("t3" :@{|->} "t4") ("t3b" :@{|->} "t4b") 
)  
}
\end{equation*}

\noindent We obtain the following equations

\begin{eqnarray*}
u^{2} &=& w  \\
u^{2}a &=& ua^{2} - a \\
wu &=& ua -a \\
-w^{2} &=& a- u
\end{eqnarray*}







\noindent Solving these proves the following lemma. 

\begin{lemma}\label{braidlemma} A braiding (\ref{braid}) on a Fibonacci category is completely determined by $u, w=u^2$ such that
$${u^{2} = ua -1}$$
\end{lemma}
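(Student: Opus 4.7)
The approach is purely algebraic: having already extracted from the hexagon diagrams the four equations
\begin{align*}
u^{2} &= w,\\
u^{2}a &= ua^{2} - a,\\
wu &= ua - a,\\
-w^{2} &= a - u,
\end{align*}
together with the constraint $a^{2}=a+1$ coming from Lemma~\ref{assoclemma}, I want to show that this system is equivalent to the single relation $u^{2}=ua-1$ (with $w$ then forced to be $u^{2}$). The first equation is used to eliminate $w$, reducing everything to relations in $u$ and $a$.

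First I focus on the second equation. Substituting $a^{2}=a+1$ on the right gives $u^{2}a - ua = u - a$, which I rewrite as $a(u^{2}+1)=u(a+1)=ua^{2}$. Since $a^{2}=a+1\neq 0$, the scalar $a$ is invertible, so dividing through yields $u^{2}+1 = ua$, i.e.\ the desired relation
\[u^{2} = ua - 1.\]

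To close the loop I verify that $u^{2}=ua-1$ (together with $w=u^{2}$) forces the remaining third and fourth hexagon equations. Computing $u^{3}$ directly using this relation and $a^{2}=a+1$ gives
\[u^{3} = u(ua-1) = ua^{2}-u = u(a+1)-u = ua,\]
wait—let me redo this carefully: $u^{3}=u\cdot u^{2}=u(ua-1)=u^{2}a-u=(ua-1)a-u=ua^{2}-a-u=u(a+1)-a-u=ua-a$, which is exactly $wu$. An analogous computation gives $u^{4}=u\cdot u^{3}=u(ua-a)=u^{2}a-ua=(ua-1)a-ua=ua^{2}-a-ua=u-a$, so $-w^{2}=-u^{4}=a-u$ as required. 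The second equation is also immediate, since $(ua-1)a = ua^{2}-a$.

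The main (very mild) obstacle is simply the bookkeeping: recognising that the four hexagon equations, which at first look like four independent conditions on $u$, all collapse to a single quadratic relation once the Fibonacci identity $a^{2}=a+1$ is applied. No conceptual difficulty is expected, and the lemma follows directly from this equivalence.
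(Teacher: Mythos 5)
Your proposal is correct and follows the same route as the paper, which simply lists the four hexagon equations and states that solving them gives the lemma; you supply the elementary algebra showing that, modulo $a^2=a+1$ and the elimination $w=u^2$, the system collapses to the single relation $u^2=ua-1$, and you correctly verify that the remaining two equations are consequences. (The only blemish is the false start in your computation of $u^3$, which you immediately correct; the final derivations are all accurate.)
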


\begin{remark} Note that $a = u + u^{-1}$ together with $a^{2} - a = 1$ implies that $u$ is a primitive root of unity of order $10$. 
Indeed, replacing $a$ by $u+u^{-1}$ in $a^2 = 1+a$ we get
$$u + u^{-1} +1 = (u + u^{-1})^2 = u^2 + 2 + u^{-2}$$ 
or 
$$0 = u^2 - u + 1 - u^{-1} + u^{-2} = u^{-2}(u^4 - u^3 + u^2 - u + 1).$$  
Thus the field of definition of a braided Fibonacci category is the cyclotomic field $\mathbb{Q}(\sqrt[10]{1})=\mathbb{Q}(\sqrt[5]{1})$.
\end{remark}



\subsection{Twist}

For both simple objects $X$ and $I$ the space of endomorphisms is one dimensional (generated by the identity morphism) and thus the twist automorphisms of $I$ and $X$ are simply scalar multiples of the identities on that object:
\begin{eqnarray*} \theta_{1}&=&\id_{I} \\  \theta_{X}&=&\rho\cdot\id_{X} \end{eqnarray*}
where $\rho\in k^{*}$ and naturality demands the scalar for $\theta_{I}$ to be $1$ (see \cite{tu}).

\noindent Using the coherence axiom for twists on the basis trees of $\fib (X^{2},X)$ and $\fib (X^{2},I)$ respectively gives

\begin{equation*}\def\tbmarg{6pt}
 \renewcommand{\objectstyle}{\labelstyle}
\xygraph{ !{0;/r4.0pc/:;/u4.0pc/::}
[]*++<0pt,\tbmarg>{\TreeTwo}="t1" (
[d]*+<0pt,\tbmarg>{\TreeTwo}="t2" (!{+L*!R+\txt\scriptsize{$u$}})  ("t1" :@{|->} "t2")
)
"t1"[rrd]*+<0pt,\tbmarg>{\TreeTwo}="t3" (!{+L*!R+\txt\scriptsize{$u^{2}$}="tt3"}) (!{"t3"."tt3"="t3"})  ("t2" :@{|->} "t3")
"t1"[rrrr]*+<0pt,\tbmarg>{\TreeTwo}="t4" (!{+L*!R+\txt\scriptsize{$\rho$}="tt4"}) (!{"t4"."tt4"="t4"})  ("t1" :@{|->} "t4") (
[d]*+<0pt,\tbmarg>{\TreeTwo}="t5" (!{+L*!R+\txt\scriptsize{$\rho^{2}u^{2}$}="tt5"}) (!{"t5"."tt5"="t5"}) ("t3" :@{|->} "t5")  ("t4" :@{=} "t5")
)
}
\end{equation*}

\noindent and

\begin{equation*}\def\tbmarg{6pt}
 \renewcommand{\objectstyle}{\labelstyle}
\xygraph{ !{0;/r4.0pc/:;/u4.0pc/::}
[]*++<0pt,\tbmarg>{\TreeTwoid}="t1" (
[d]*+<0pt,\tbmarg>{\TreeTwoid}="t2" (!{+L*!R+\txt\scriptsize{$u^{2}$}})  ("t1" :@{|->} "t2")
)
"t1"[rrd]*+<0pt,\tbmarg>{\TreeTwoid}="t3" (!{+L*!R+\txt\scriptsize{$u^{4}$}="tt3"}) (!{"t3"."tt3"="t3"})  ("t2" :@{|->} "t3")
"t1"[rrrr]*+<0pt,\tbmarg>{\TreeTwoid}="t4" (!{+L*!R+\txt\scriptsize{$1$}="tt4"}) (!{"t4"."tt4"="t4"})  ("t1" :@{|->} "t4") (
[d]*+<0pt,\tbmarg>{\TreeTwoid}="t5" (!{+L*!R+\txt\scriptsize{$u^{4}$}="tt5"}) (!{"t5"."tt5"="t5"}) ("t3" :@{|->} "t5")  ("t4" :@{=} "t5")
)
}
\end{equation*}

\begin{lemma} Twist structures on $\fib$ are completely determined by the braiding as
$${\rho = u^{-2}}.$$
\end{lemma}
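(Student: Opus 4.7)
The plan is to apply the ribbon coherence axiom
\[
\theta_{X\ot X} \;=\; (\theta_X\ot\theta_X)\circ c_{X,X}\circ c_{X,X}
\]
to each of the two basis morphisms of $\fib(X^{\ot 2},X)$ and $\fib(X^{\ot 2},I)$, and to read off two scalar equations in $\rho$ whose common solution is forced to be $u^{-2}$. These are precisely the two diagrams displayed immediately before the lemma.

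First, I would handle the basis tree in $\fib(X^{\ot 2},X)$. By naturality of the twist, $\theta_X\circ f = f\circ\theta_{X\ot X}$ for any $f\colon X\ot X\to X$. Applying this to the basis tree $\TreeTwo$: the left-hand side multiplies the tree by $\rho$ (since $\theta_X = \rho\,\id_X$), while the right-hand side multiplies by $\rho^2$ from the two instances of $\theta_X$ on the inputs and by $u^2$ from the double braiding restricted to the $X$-summand of $X\ot X$ (using Lemma \ref{braidlemma}, the eigenvalue of $c_{X,X}$ on that summand is $u$). This produces the equation
\[
\rho \;=\; \rho^{2}\,u^{2},
\]
which immediately gives $\rho = u^{-2}$.

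Next I would check the basis tree in $\fib(X^{\ot 2},I)$ for consistency. Here the left-hand side contributes $\theta_I = 1$, so it multiplies the tree by $1$. The right-hand side again contributes $\rho^{2}$ from $\theta_X\ot\theta_X$, while the double braiding on the $I$-summand contributes $w^{2} = u^{4}$. This gives
\[
1 \;=\; \rho^{2}\,u^{4},
\]
i.e.\ $\rho^{2} = u^{-4}$, which is compatible with $\rho = u^{-2}$ from the previous paragraph (and rules out the sign ambiguity once combined with it).

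There is no real obstacle: the only subtlety is being careful that when we apply $\theta_{X\ot X}$ to a basis vector $f\colon X\ot X\to Y$ with $Y\in\{I,X\}$, the scalar produced on the left is $\theta_Y$, and that the double braiding acts on each isotypic component by the square of the corresponding braiding eigenvalue identified in Lemma \ref{braidlemma}. Once both equations are written down, the conclusion $\rho = u^{-2}$ is immediate.
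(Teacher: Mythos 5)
Your proof is correct and follows essentially the same route as the paper: the two displayed diagrams before the lemma are exactly the ribbon coherence axiom $\theta_{X\otimes X}=(\theta_X\otimes\theta_X)c_{X,X}c_{X,X}$ evaluated on the basis vectors of $\fib(X^{\otimes 2},X)$ and $\fib(X^{\otimes 2},I)$, yielding $\rho=\rho^2u^2$ and $1=\rho^2u^4$ respectively, with the first already forcing $\rho=u^{-2}$. The only cosmetic remark is that the first equation alone determines $\rho$ uniquely (no sign ambiguity survives it), so the $I$-component computation is purely a consistency check, as you note.
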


\subsection{Monoidal Equivalences}

Let $\fib_{A}$ denote the monoidal category $\fib$ with the associativity matrix $A$ as first prescribed in \ref{fibassoc} with entries $a,b,c,d\in k^{\ast}$ and $\alpha = 1$. 
Suppose there is another associativity matrix for $\fib$ written
$$A'=\left(\begin{array}{cc} a' & b' \\ c' & d'\end{array}\right)$$
such that there is a monoidal equivalence $F: \fib_{A} \to \fib_{A'}$.
Due to the semi-simple nature of $\fib$ the only possibility for the underlying endo-functor of this equivalence is the identity (on objects).
Thus the monoidal functor is completely determined by the automorphisms on $\fib(X^{2},I)$ and $\fib(X^{2},X)$ given by

\begin{equation}\label{moneq}\def\tbmarg{6pt}
 \renewcommand{\objectstyle}{\labelstyle}
\xygraph{ !{0;/r4.0pc/:;/u4.0pc/::}
[]*++<0pt,\tbmarg>{\TreeTwoid}="t1" (
[d]*+<0pt,\tbmarg>{\TreeTwo}="t3"
)
"t1"[rr]*+<0pt,\tbmarg>{\TreeTwoid}="t2" (!{+L*!R+\txt\scriptsize{$f$}="tt2"}) (!{"t2"."tt2"="t2"})  ("t1" :@{|->} "t2") (
[d]*+<0pt,\tbmarg>{\TreeTwo}="t4" (!{+L*!R+\txt\scriptsize{$g$}="tt4"}) (!{"t4"."tt4"="t4"}) ("t3" :@{|->} "t4") 
)
}
\end{equation}

\noindent respectively, where $f,g\in k^{*}$.

\noindent The coherence condition for the monoidal equivalence gives the following calculations

\begin{equation*}\def\tbmarg{6pt}
 \renewcommand{\objectstyle}{\labelstyle}
\xygraph{ !{0;/r4.0pc/:;/u4.0pc/::}
[]*++<0pt,\tbmarg>{\TreeThreeids}="t1" 
"t1"[urr]*+<0pt,\tbmarg>{\TreeThreeidrevs}="t2" (!{+L*!R+\txt\scriptsize{$\phantom{f}$}="tt2"}) (!{"tt2"."t2"="t2"}) ("t1" :@{|->} "t2") ( 
[dd]*+<0pt,\tbmarg>{\TreeThreeids}="t3" (!{+L*!R+\txt\scriptsize{$f$}="tt3"}) (!{"tt3"."t3"="t3"}) ("t1" :@{|->} "t3")
)
"t1"[urrrr]*+<0pt,\tbmarg>{\TreeThreeidrevs}="t4" (!{+L*!R+\txt\scriptsize{$f$}="tt4"}) (!{"tt4"."t4"="t4"}) ("t2" :@{|->} "t4") (
[dd]*+<0pt,\tbmarg>{\TreeThreeids}="t5" (!{+L*!R+\txt\scriptsize{$fg$}="tt5"}) (!{"tt5"."t5"="t5"}) ("t3" :@{|->} "t5")
)
"t1"[urrrrrr]*+<0pt,\tbmarg>{\TreeThreeidrevs}="t6" (!{+L*!R+\txt\scriptsize{$fg$}="tt6"}) (!{"t6"."tt6"="t6"}) ("t4" :@{|->} "t6") (
[dd]*+<0pt,\tbmarg>{\TreeThreeidrevs}="t7" (!{+L*!R+\txt\scriptsize{$fg$}="tt7"}) (!{"t7"."tt7"="t7"}) ("t5" :@{|->} "t7") ("t7" :@{=} "t6")
)
}
\end{equation*}

\noindent and

\begin{equation*}\def\tbmarg{6pt}
 \renewcommand{\objectstyle}{\labelstyle}
\xygraph{ !{0;/r4.0pc/:;/u4.0pc/::}
[]*++<0pt,\tbmarg>{\TreeThreeAs}="t1"
"t1"[urr]*+<0pt,\tbmarg>{\TreeThreeArevs}="t2" (!{+L*!R+\txt\scriptsize{$a$}="tt2"}) (!{"t2"."tt2"="t2"}) ("t1" :@{|->} "t2") ( 
[d]*+<0pt,\tbmarg>{\TreeThreeBrevs}="t3" (!{+L*!R+\txt\scriptsize{$b$}="tt3"}) (!{"t3"."tt3"="t3"}) ("t3" :@{}|{+} "t2")
[d]*+<0pt,\tbmarg>{\TreeThreeAs}="t4" (!{+L*!R+\txt\scriptsize{$g$}="tt4"}) (!{"t4"."tt4"="t4"}) ("t1" :@{|->} "t4")
)
"t1"[urrrr]*+<0pt,\tbmarg>{\TreeThreeArevs}="t5" (!{+L*!R+\txt\scriptsize{$ga$}="tt5"}) (!{"t5"."tt5"="t5"})  ("t2" :@{|->} "t5")  (
[d]*+<0pt,\tbmarg>{\TreeThreeBrevs}="t6"  (!{+L*!R+\txt\scriptsize{$b$}="tt6"}) (!{"t6"."tt6"="t6"})  ("t6" :@{}|{+} "t5")
[d]*+<0pt,\tbmarg>{\TreeThreeAs}="t7" (!{+L*!R+\txt\scriptsize{$g^{2}$}="tt7"}) (!{"t7"."tt7"="t7"}) ("t4" :@{|->} "t7")
)
"t1"[urrrrrr]*+<0pt,\tbmarg>{\TreeThreeArevs}="t8" (!{+L*!R+\txt\scriptsize{$ag^{2}$}="tt8"}) (!{"t8"."tt8"="t8"})  ("t5" :@{|->} "t8")  (
[d]*+<0pt,\tbmarg>{\TreeThreeBrevs}="t9" (!{+L*!R+\txt\scriptsize{$bf$}="tt9"}) (!{"t9"."tt9"="t9"})  ("t9" :@{}|{+} "t8")
[d]*+<0pt,\tbmarg>{\TreeThreeArevs}="t10" (!{+L*!R+\txt\scriptsize{$g^{2}a'$}="tt10"}) (!{"t10"."tt10"="t10"}) ("t7" :@{|->} "t10") ("t10" :@{=} "t9") 
[d]*+<0pt,\tbmarg>{\TreeThreeBrevs}="t11" (!{+L*!R+\txt\scriptsize{$g^{2}b'$}="tt11"}) (!{"t11"."tt11"="t11"}) ("t11" :@{}|{+} "t10")
)
}
\end{equation*}

\noindent and

\begin{equation*}\def\tbmarg{6pt}
 \renewcommand{\objectstyle}{\labelstyle}
\xygraph{ !{0;/r4.0pc/:;/u4.0pc/::}
[]*++<0pt,\tbmarg>{\TreeThreeBs}="t1"
"t1"[urr]*+<0pt,\tbmarg>{\TreeThreeArevs}="t2" (!{+L*!R+\txt\scriptsize{$c$}="tt2"}) (!{"t2"."tt2"="t2"}) ("t1" :@{|->} "t2") ( 
[d]*+<0pt,\tbmarg>{\TreeThreeBrevs}="t3" (!{+L*!R+\txt\scriptsize{$d$}="tt3"}) (!{"t3"."tt3"="t3"}) ("t3" :@{}|{+} "t2")
[d]*+<0pt,\tbmarg>{\TreeThreeBs}="t4" (!{+L*!R+\txt\scriptsize{$$}="tt4"}) (!{"t4"."tt4"="t4"}) ("t1" :@{|->} "t4")
)
"t1"[urrrr]*+<0pt,\tbmarg>{\TreeThreeArevs}="t5" (!{+L*!R+\txt\scriptsize{$cg$}="tt5"}) (!{"t5"."tt5"="t5"})  ("t2" :@{|->} "t5")  (
[d]*+<0pt,\tbmarg>{\TreeThreeBrevs}="t6"  (!{+L*!R+\txt\scriptsize{$d$}="tt6"}) (!{"t6"."tt6"="t6"})  ("t6" :@{}|{+} "t5")
[d]*+<0pt,\tbmarg>{\TreeThreeBs}="t7" (!{+L*!R+\txt\scriptsize{$f$}="tt7"}) (!{"t7"."tt7"="t7"}) ("t4" :@{|->} "t7")
)
"t1"[urrrrrr]*+<0pt,\tbmarg>{\TreeThreeArevs}="t8" (!{+L*!R+\txt\scriptsize{$cg^{2}$}="tt8"}) (!{"t8"."tt8"="t8"})  ("t5" :@{|->} "t8")  (
[d]*+<0pt,\tbmarg>{\TreeThreeBrevs}="t9" (!{+L*!R+\txt\scriptsize{$df$}="tt9"}) (!{"t9"."tt9"="t9"})  ("t9" :@{}|{+} "t8")
[d]*+<0pt,\tbmarg>{\TreeThreeArevs}="t10" (!{+L*!R+\txt\scriptsize{$fc'$}="tt10"}) (!{"t10"."tt10"="t10"}) ("t7" :@{|->} "t10") ("t10" :@{=} "t9") 
[d]*+<0pt,\tbmarg>{\TreeThreeBrevs}="t11" (!{+L*!R+\txt\scriptsize{$fd'$}="tt11"}) (!{"t11"."tt11"="t11"}) ("t11" :@{}|{+} "t10")
)
}
\end{equation*}

\noindent Put
$$G=\left(\begin{array}{cc} f & 0 \\ 0 & g^{2}\end{array}\right).$$
Monoidal coherence equations are equivalent to the matrix conjugation equation
$$A' = G^{-1} A G\,\, .$$

\begin{proposition}
Up to monoidal equivalence associativity constraints 
 \ref{assco} for Fibonacci category correspond to solutions of the equation $a^2 = a+1$: $$\alpha = 1,\quad {A=\left(\begin{array}{cc} a & 1 \\ -a & -a \end{array}\right)}.$$
For each associativity constraint on $\fib$ braided (balanced) structures up 
to braided equivalence correspond to solutions of $u^2 = au - 1$.
\end{proposition}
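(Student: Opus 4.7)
The plan is to combine Lemmas~\ref{assoclemma} and \ref{braidlemma} with the conjugation criterion $A'=G^{-1}AG$, $G=\operatorname{diag}(f,g^{2})$, derived just above the statement from the pentagons for a monoidal functor. For the first part, I begin with an associativity in the form from Lemma~\ref{assoclemma}, $A=\bigl(\begin{smallmatrix} a & b\\ -ab^{-1} & -a\end{smallmatrix}\bigr)$ with $a^{2}=a+1$ and $b\in k^{*}$, and compute $G^{-1}AG=\bigl(\begin{smallmatrix} a & bg^{2}f^{-1}\\ -afb^{-1}g^{-2} & -a\end{smallmatrix}\bigr)$. Choosing $f=bg^{2}$ (any $g\in k^{*}$ works) normalises the constraint to $\bigl(\begin{smallmatrix} a & 1\\ -a & -a\end{smallmatrix}\bigr)$, so every associativity is monoidally equivalent to one of the two normalised forms. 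To see that the two roots of $a^{2}=a+1$ produce genuinely inequivalent categories, I observe that conjugation by a diagonal matrix preserves diagonal entries, so the ordered pair $(a,-a)$ is an invariant of the monoidal equivalence class; since the roots $(1\pm\sqrt{5})/2$ are distinct in $k$, the two normal forms are inequivalent, giving the claimed bijection.

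For the second part I fix an associativity $A$ in normal form and invoke Lemma~\ref{braidlemma}: braidings on $\fib_{A}$ are parameterised by scalars $u\in k^{*}$ satisfying $u^{2}=au-1$, with $w=u^{2}$ determined. A braided monoidal autoequivalence of $\fib_{A}$ acts on the one-dimensional hom-spaces $\fib(X^{2},I)$ and $\fib(X^{2},X)$ by scalars $f$ and $g$ as in~(\ref{moneq}). Compatibility with $c_{X,X}$ reduces, on each of these one-dimensional spaces, to the naturality identities $fw=w'f$ and $gu=u'g$, forcing $u'=u$ (and thus $w'=w$, and $\rho'=\rho$ for the induced twist). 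Hence $u$ is an invariant of the braided equivalence class, and braided (balanced) structures on $\fib_{A}$ up to equivalence are in bijection with solutions of $u^{2}=au-1$.

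The only non-trivial step is the diagonal-conjugation computation of part~(i); the invariance statements in both parts then follow formally from the diagonal shape of $G=\operatorname{diag}(f,g^{2})$ and from the one-dimensionality of the hom-spaces on which the braiding acts, so the remainder is routine.
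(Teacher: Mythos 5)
Your proof is correct and follows essentially the same route as the paper: normalise $b$ to $1$ via the diagonal conjugation $G^{-1}AG$ with $f=bg^{2}$, then appeal to Lemma~\ref{braidlemma} for the braidings. The only difference is that you spell out the two invariance arguments (diagonal entries are preserved under diagonal conjugation, and $u$ is fixed by naturality on one-dimensional hom-spaces) which the paper dismisses with ``clearly''; this is a welcome addition, and your choice $f=bg^{2}$ also quietly corrects the paper's misprinted condition $f^{-1}g^{2}=b$.
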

\begin{proof}
For any $A$ as found in lemma \ref{assoclemma} Choose $f$ and $g$ such that $f^{-1}g^{2} = b$ so that 
$$G^{-1}AG=\left(\begin{array}{cc} f^{-1} & 0 \\ 0 & g^{-2}\end{array}\right) \left(\begin{array}{cc} a & b \\ -ab^{-1} & -a \end{array}\right) \left(\begin{array}{cc} f & 0 \\ 0 & g^{2}\end{array}\right) = \left(\begin{array}{cc} a & 1 \\ -a & -a \end{array}\right)\, . $$
That there are exactly two braided structures is given by lemma \ref{braidlemma}. Clearly monoidal equivalences (\ref{moneq}) can not identify different braided (balanced) structures.
\end{proof}

\begin{remark}
We write $\fib_{u}$ for $\fib$ with a particular choice of parameterizing $u$ (and $a = u + u^{-1}$).
\end{remark}

\subsection{Duality and dimensions}

Here we show that Fibonacci categories are rigid; that is, any object has a dual, and that they are modular.

All we need to check is that $X$ has a dual. 
From the fusion rule it is clear that if $X^*$ exists it must be $X$. 
Thus, all we need to construct is the evaluation and coevaluation maps
$$ev:X\otimes X\to I,\quad coev:I\to X\otimes X,$$
such that the compositions
$$\xymatrix{X\ar[r]^-{1\otimes coev} & X\otimes(X\otimes X) \ar[rr]^-{\alpha_{X,X,X}^{-1}} && (X\otimes X)\otimes X \ar[r]^-{ev\otimes 1} & X}$$
\begin{equation}\label{dual}
\xymatrix{X\ar[r]^-{coev\otimes 1} & (X\otimes X)\otimes X \ar[rr]^-{\alpha_{X,X,X}} && X\otimes (X\otimes X) \ar[r]^-{1\otimes ev} & X}
\end{equation}
are identity. 
Since such  evaluation and coevaluation maps are unique up to a constant we can assume that $ev$ is the basic element in $\fib(X^2,I)$. 
To describe $coev$ note that the composition in $\fib$ gives a non-degenerate pairing 
$$\fib(I,X^2)\otimes\fib(X^2,I)\to \fib(I,I) = k.$$
We can choose a basic element in $\fib(I,X^2)$ to be the dual to the basic element in $\fib(X^2,I)$ as in \ref{fibassoc}. 
Then $coev$ is proportional to the basic element with the coefficient $\gamma$. 
The compositions (\ref{dual}) are both equal to $-a\gamma$ and so 
$$\gamma = -a^{-1} = 1-a.$$

\begin{lemma}
The dimension $dim(X)$ of $X$ is $1-a$.
\end{lemma}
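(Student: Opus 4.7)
The plan is to apply the standard ribbon-trace formula for the categorical dimension and then reduce the computation to scalar arithmetic using the structural data already pinned down in this section. Recall that in a ribbon category with right duality $(ev_X, coev_X)$, one has
$$\dim(X) \;=\; ev_X \circ c_{X,X^*} \circ (\theta_X \otimes 1_{X^*}) \circ coev_X,$$
so with $X^{*}=X$ and the explicit $ev$, $coev$ constructed just above the lemma, everything is one-dimensional and hence scalar.

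First I would observe that $\mathrm{End}(X) = k\cdot 1_X$, so $\theta_X = \rho\, 1_X$, and hence $\theta_X \otimes 1_X = \rho\, 1_{X\otimes X}$ can be pulled out of the composition. By the twist lemma $\rho = u^{-2}$. Next, since $\fib(X^2,I)$ is one-dimensional with $ev$ as basic element, the braiding formula (\ref{braid}) says that post-composition with $c_{X,X}$ multiplies by $w$, i.e. $ev\circ c_{X,X} = w\, ev = u^{2}\, ev$.

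Combining these two scalars gives
$$\dim(X) \;=\; \rho\cdot w\cdot (ev\circ coev) \;=\; u^{-2}\cdot u^{2}\cdot (ev\circ coev) \;=\; ev\circ coev.$$
By construction $ev$ is the basic element of $\fib(X^2,I)$ and $coev = \gamma$ times its composition-dual basic element in $\fib(I,X^2)$, so $ev\circ coev = \gamma$; combined with $\gamma = 1-a$ from the computation just above the lemma, this yields $\dim(X) = 1-a$ as claimed.

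The only step requiring a moment's care is the cancellation of the twist scalar against the braiding scalar in the trace formula; this cancellation is what makes $\dim(X)$ independent of the choice of braiding $u$ and records only the data encoded in $\gamma$. Everything else is either a direct quotation of the preceding lemmas or arithmetic in $k$ using $a^2 = a+1$.
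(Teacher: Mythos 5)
Your proof is correct and follows essentially the same route as the paper: the paper's proof simply asserts that the composite $ev\circ(\theta_X\otimes 1)c_{X,X}\circ coev$ equals $(1-a)1_I$, and you have filled in exactly the intended computation — the twist contributes $\rho=u^{-2}$, the braiding contributes $w=u^{2}$ on the $I$-component, these cancel, and what remains is $ev\circ coev=\gamma=-a^{-1}=1-a$. (The order of the twist and braiding factors differs from the paper's composite, but since $\theta_X\otimes 1=\rho\,1_{X\otimes X}$ is scalar this is immaterial.)
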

\begin{proof}
The composition 
$$\xymatrix{I\ar[r]^-{coev} & X\otimes X \ar[rr]^-{(\theta_{X}\otimes 1)c_{X,X}} && X\otimes X \ar[r]^-{ev} & I}$$ 
coincides with $(1-a)1_X$.
\end{proof}

\begin{corollary}
The categorical dimension $Dim(\fib_{u})$ of the ribbon category $\fib_{u}$ is $3 - a$. 
The square of the multiplicative central charge $\xi(\fib_{u})^2$  is $u^{-1}$. 
\end{corollary}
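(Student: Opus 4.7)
The plan is to compute the two quantities directly from the formulas and the relations established so far.

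For the categorical dimension, I would first square the dimension of $X$. The preceding lemma gives $d(X)=1-a$, so $d(X)^2=(1-a)^2=1-2a+a^2$, which simplifies to $2-a$ using the defining relation $a^2=a+1$. Adding $d(I)^2=1$ then gives $\mathrm{Dim}(\fib_u)=3-a$ immediately.

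For the central charge, the twist values are $\theta(I)=1$ and $\theta(X)=u^{-2}$, so $\xi(\fib_u)\sqrt{\mathrm{Dim}(\fib_u)}=1+u^{-2}(2-a)$. Squaring gives
\[
\xi(\fib_u)^2(3-a) = \bigl(1+u^{-2}(2-a)\bigr)^2,
\]
and the task is to show that the right-hand side equals $u^{-1}(3-a)$. I would proceed by rewriting $u^{-2}$ in terms of $a$ and $u^{-1}$: the braiding relation $u^2=ua-1$ from Lemma~\ref{braidlemma}, divided by $u^2$, yields $u^{-2}=u^{-1}a-1$. Substituting this into $1+u^{-2}(2-a)$ and using $a^2=a+1$ to eliminate $a^2$, I expect the expression to factor as $(a-1)(1+u^{-1})$. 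Squaring then gives $(a-1)^2(1+u^{-1})^2=(2-a)(1+u^{-1})^2$, again by $a^2=a+1$.

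Finally, I would expand $(1+u^{-1})^2=1+2u^{-1}+u^{-2}$ and apply $u^{-2}=u^{-1}a-1$ once more to get $u^{-1}(a+2)$. The numerator becomes $(2-a)u^{-1}(a+2)=u^{-1}(4-a^2)=u^{-1}(3-a)$, where the last step uses $a^2=a+1$. Dividing by $3-a$ yields $\xi(\fib_u)^2=u^{-1}$, as required. The computation is entirely mechanical; the only mildly delicate point is keeping track of which of the two relations $a^2=a+1$ and $u^{-2}=u^{-1}a-1$ to apply at each step so that the telescoping into $u^{-1}(3-a)$ comes out cleanly.
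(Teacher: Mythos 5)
Your computation is correct, and the first half (the dimension) is identical to the paper's. For the central charge the two arguments differ slightly in which identity for $\xi$ they exploit. You work straight from the definition $\xi(\fib_u)=\tau_+/\sqrt{\mathrm{Dim}(\fib_u)}$ with $\tau_+=\sum_X\theta(X)d(X)^2$, square it, and verify $\tau_+^2=u^{-1}(3-a)$ by repeatedly trading between the relations $a^2=a+1$ and $u^{-2}=u^{-1}a-1$; your intermediate factorization $\tau_+=(a-1)(1+u^{-1})$ checks out, as does the final telescoping to $u^{-1}(3-a)$. The paper instead uses the Gauss-sum identity $\xi^2=\tau_+/\tau_-$ (which presupposes $\tau_+\tau_-=\mathrm{Dim}$, a standard fact for modular categories that the paper does not restate), substitutes $a=u+u^{-1}$, and reduces $\tau_\pm$ to $u^{\mp2}-u^{\mp4}$ via the cyclotomic relation $u^4-u^3+u^2-u+1=0$, getting $\xi^2=-u^{-6}=u^{-1}$. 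Your route is marginally more self-contained, since it needs only the definition of $\xi$ as given in the paper and the two quadratic relations already established; the paper's route is shorter once one grants the $\tau_+/\tau_-$ identity. Both are valid.
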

\begin{proof}
For the categorical dimension
$$Dim(\fib_{u}) = dim(I)^2 + dim(X)^2 = 1 + (1-a)^2 = 1 + 1 - 2a + a^2 = 3 - a.$$
Note that, since $1 - u^{-1} + u^{-2} - u^{-3} + u^{-4}=0$,
$$\tau_+(\fib_{u}) = 1 + u^{-2}(1-a)^2 = 1 + u^{-2}(2-u-u^{-1}) = 1 + 2u^{-1} - u^{-1} - u^{-3}$$ 
coincides with $u^{-2}-u^{-4}$. 
Similarly, 
$$\tau_-(\fib_{u}) = 1 + u^2(1-a)^2 = 1 + u^2(2-u-u^{-1}) = 1 + 2u^{2} - u^{3} - u = u^2-u^4.$$
Thus for the square of the multiplicative central charge 
$$\xi(\fib_{u})^2 = \frac{\tau_+}{\tau_-} = \frac{u^{-4}(u^2-1)}{u^2(1-u^2)} = -u^{-6} = u^{-1}.$$
\end{proof}

\begin{proposition}
The ribbon category $\fib_{u}$ is modular. 
The $S$- and $T$-matrices are
$$S = \frac{1}{\sqrt{3-a}}\left(\begin{array}{cc}1& 1-a \\ 1-a & -1 \end{array}\right),\quad T =  u^\frac{1}{6}\left(\begin{array}{cc} 1 & 0 \\ 0 & u^2 \end{array}\right).$$
\end{proposition}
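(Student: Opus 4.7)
The plan is to verify modularity by a direct computation of the $\tilde S$-matrix, and then to read off the explicit $S$- and $T$-matrices from their defining formulas. Since $\fib_u$ has only two isomorphism classes of simple objects, only one nontrivial entry of $\tilde S$ has to be computed.

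First, I would list the four entries $\tilde S_{X,Y}=tr(c_{Y,X}c_{X,Y})$. Three of them are immediate from the unit constraints: $\tilde S_{I,I}=1$ and $\tilde S_{I,X}=\tilde S_{X,I}=\dim(X)=1-a$. For the remaining entry $\tilde S_{X,X}=tr(c_{X,X}^2)$ I would use the decomposition $X\otimes X=I\oplus X$: by Schur's lemma $c_{X,X}^2$ acts as a scalar on each simple summand, and the braiding data (\ref{braid}) identify these scalars as $w^2=u^4$ on the $I$-summand and $u^2$ on the $X$-summand. Summing against the categorical dimensions yields $\tilde S_{X,X}=u^4\cdot 1+u^2(1-a)$.

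The next step is to show that this equals $-1$ by combining the braiding relation $u^2+1=ua$ from Lemma \ref{braidlemma} with the cyclotomic identity $u^4-u^3+u^2-u+1=0$ noted in the Remark: rewriting the latter as $u^4+u^2+1=u^3+u=u(u^2+1)=u\cdot ua=u^2a$ gives $\tilde S_{X,X}+1=(u^4+u^2+1)-u^2a=0$. Hence $\tilde S=\left(\begin{array}{cc}1 & 1-a\\ 1-a & -1\end{array}\right)$, and $\det\tilde S=-1-(1-a)^2=a-3\neq 0$ (since $a^2=a+1$ forces $a\neq 3$), which is the modularity condition. Dividing by $\sqrt{Dim(\fib_u)}=\sqrt{3-a}$ then produces the claimed $S$-matrix.

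Finally, I would obtain the $T$-matrix by substituting $\xi(\fib_u)^2=u^{-1}$ from the preceding Corollary and the twist values $\theta_I=1$, $\theta_X=u^{-2}$ into $T=\xi(\fib_u)^{-1/3}diag(\theta_X)$, once a consistent branch of the sixth root of $u$ has been fixed. The main obstacle I anticipate is precisely this bookkeeping around the square and cube roots in $\xi^{-1/3}$ and $\sqrt{Dim(\fib_u)}$: the branch ambiguity has to be resolved so that the resulting pair $(S,T)$ realises the $SL_2(\Z)$ relations (the relation $S^2=I$ is automatic from self-duality of both simples, but $(ST)^3=S^2$ pins down the correct sixth root). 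Beyond that, the only genuinely algebraic input is the one-line identity $u^4+u^2(1-a)=-1$, which is a direct consequence of the braiding equation and the minimal polynomial of $u$ over $\mathbb{Q}$.
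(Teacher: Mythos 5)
Your proposal is correct and follows essentially the same route as the paper: the off-diagonal entries of $\tilde S$ are the dimensions $1$ and $1-a$, the entry $\tilde S_{X,X}=tr(c_{X,X}^2)$ is computed from the eigenvalues $(u^4,u^2)$ of $c_{X,X}^2$ on the summands of $X\otimes X=I\oplus X$ and reduced to $-1$ via the cyclotomic relation, and $T$ is read off from $\xi(\fib_u)^{-1/3}$ and the twists. Your explicit check $\det\tilde S=a-3\neq 0$ and the remark on fixing the branch of $u^{1/6}$ are small additions the paper leaves implicit, but they do not change the argument.
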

\begin{proof}
The first row (and column) of the $S$-matrix is filled with dimensions (in our case $1$ and $d$). 
The only non-trivial entry is the lower right corner, which is the trace of the square of the braiding on $X$. 
Since the braiding on $X$ has a form $(u^21_I,u1_X)$ with respect to the decomposition $X\otimes X = I\oplus X$. 
Its square has eigenvalues $(u^41_I,u^21_X)$. 
Then the trace $tr(c_{X,X}^2)$ can be written as 
$$u^4+u^2dim(X) = u^4+u^2(1-u-u^{-1}) = u^4-u^3+u^2-u = -1.$$ 
By the definition the matrix $T$ up to the factor $\xi(Fib_u)^{-\frac{1}{3}} = u^\frac{1}{6}$ is a diagonal matrix with diagonal entries being $\theta_1$ and $\theta_X$.
\end{proof}

The results of this section are summarized by the following theorem.

\begin{theorem}
Every braided balanced structure on a Fibonacci category is modular. 
Thus there are four non-equivalent Fibonacci modular categories $\fib_{u}$, parameterized by primitive roots of unity u of order 10.
\end{theorem}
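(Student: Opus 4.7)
The plan is to assemble the classification results of the preceding subsections; the theorem is essentially a bookkeeping statement, since the hard work---constructing associativity, braiding, twist, duality, and computing the modular data---has already been carried out.

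First I would invoke the proposition immediately preceding the theorem: it exhibits explicit $S$- and $T$-matrices for $\fib_{u}$, and in particular shows that $S$ is non-degenerate (the lower-right entry $-1$, together with the diagonal entries $1$ and the off-diagonal entries $1-a$, give $\det\tilde S = -1-(1-a)^2 = a-3$, which is nonzero because the two solutions of $a^2=a+1$ are the golden ratio and its conjugate, neither equal to $3$). This already establishes modularity for every braided balanced Fibonacci category, so ``every braided balanced structure is modular'' requires no further argument.

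Next I would count equivalence classes by combining three previous results: (i) the proposition at the end of the associativity subsection reduces an associativity constraint, up to monoidal equivalence, to a choice of root $a$ of $a^2=a+1$, giving two possibilities; (ii) Lemma \ref{braidlemma} then reduces a braiding on a fixed associativity to a solution $u$ of $u^2 = au-1$, a quadratic whose two roots are mutually inverse; and (iii) the twist lemma shows the ribbon twist $\rho=u^{-2}$ is forced by $u$, so no further freedom enters. The same proposition notes that the monoidal equivalences of form (\ref{moneq}) cannot identify different braided (balanced) structures, so the pair $(a,u)$ is a complete invariant up to braided balanced equivalence.

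Finally, I would use the Remark following Lemma \ref{braidlemma}: substituting $a=u+u^{-1}$ into $a^2=a+1$ yields $u^4-u^3+u^2-u+1=0$, the $10$th cyclotomic polynomial, so the admissible values of $u$ are exactly the four primitive $10$th roots of unity. Conversely any such $u$ gives a valid pair $(a,u)$ with $a = u+u^{-1}$. Thus the four pairs $(a,u)$ correspond bijectively to the four primitive $10$th roots of unity, giving exactly four non-equivalent Fibonacci modular categories $\fib_u$.

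The main (minor) obstacle is merely checking that genuinely different $u$'s are not identified by some hidden equivalence. This is handled by the remark in the proposition at the end of the associativity subsection together with the uniqueness part of Lemma \ref{braidlemma}: the scalar $u$ is recoverable from the braiding $c_{X,X}$ via its eigenvalue on the $I$-summand of $X\otimes X$, and is therefore a braided invariant. Hence the enumeration $2\times 2 = 4$ is sharp.
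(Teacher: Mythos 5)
Your proposal is correct and matches the paper's intent exactly: the paper offers no separate proof, stating only that ``the results of this section are summarized by the following theorem,'' and your argument is precisely that assembly --- modularity from the non-degeneracy of $\tilde S$ (your computation $\det\tilde S = a-3\neq 0$ is right), the count $2\times 2=4$ from the associativity and braiding lemmas, and the identification of the four pairs $(a,u)$ with the four primitive $10$th roots of unity via $a=u+u^{-1}$ and the cyclotomic relation $u^4-u^3+u^2-u+1=0$. Nothing is missing.
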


Here is a simple consequence of the above theorem.
\begin{corollary}\label{decomp}
Let $\cC$ be a modular category and let $X\in\cC$ be such that $X^{\otimes 2} = I\oplus X$. Then
$$\cC\simeq \fib_u\boxtimes\caD$$ for some $u$ and a modular category $\caD$.
\end{corollary}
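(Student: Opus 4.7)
The plan is to produce a full Fibonacci subcategory $\cC'\subset\cC$ and then apply M\"uger's decomposition theorem (\ref{mudec}).

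First I would let $\cC'$ be the full subcategory of $\cC$ whose objects are finite direct sums of copies of $I$ and $X$. Since $I\ot I=I$, $I\ot X=X\ot I=X$ and $X\ot X=I\oplus X$, the subcategory $\cC'$ is closed under tensor product; it is also closed under duals because the fusion rule forces $X^*\cong X$ (as was checked when duality for $\fib$ was constructed). Hence $\cC'$ is a fusion subcategory of $\cC$ whose fusion rule is exactly $\fibfr$. The braiding, associativity, and ribbon twist of $\cC$ restrict to $\cC'$, so $\cC'$ is a braided balanced category with Fibonacci fusion rule. By the classification theorem just established, $\cC'$ is equivalent to some $\fib_u$ and is in particular modular.

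Next I would verify that $\cC'$ is a \emph{modular} subcategory, i.e.\ that its restricted $\tilde S$-matrix is non-degenerate. This is immediate from the preceding computation of the $S$-matrix of $\fib_u$, whose determinant $-1-(1-a)^2=-(3-a)=-\mathrm{Dim}(\fib_u)$ is non-zero (note $a\neq 3$ since $a$ satisfies $a^2=a+1$). Therefore $\cC'$ sits inside $\cC$ as a full modular subcategory.

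Finally, by M\"uger's theorem (\ref{mudec}) there is an equivalence of modular categories
\[
\cC \;\simeq\; \cC'\boxtimes\cC_\cC(\cC') \;\simeq\; \fib_u\boxtimes\caD,
\]
where $\caD:=\cC_\cC(\cC')$ is the M\"uger centraliser of $\cC'$ in $\cC$; it is automatically modular by (\ref{mudec}) (or directly because $\cC$ is modular and $\cC'$ is). The only mildly delicate point is the verification that the restricted $S$-matrix of $\cC'$ is non-degenerate, but as noted this reduces to a one-line check on $3-a$; everything else is a direct application of results already in the paper.
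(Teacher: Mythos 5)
Your proof is correct and follows essentially the same route as the paper: the full subcategory generated by $X$ is a braided balanced Fibonacci category, hence modular by the classification theorem, and M\"uger's decomposition (\ref{mudec}) finishes the argument. The extra determinant check on the restricted $\tilde S$-matrix is fine but redundant, since modularity of $\cC'$ already follows from the theorem you cite.
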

\begin{proof}
Since any braided balanced structure on a Fibonacci category is modular, $X$ generates a modular subcategory $\fib_u$  for some $u$.
Then by M\"uger's decomposition formula (\ref{mudec}) $$\cC\simeq \fib_u\boxtimes\cC_\cC(\fib_u).$$
\end{proof}
Note that it is the specific feature of the Fibonacci category that inforces the decomposition in corollary \ref{decomp}. For example an object $X$ in a modular category $\cC$ with the property $X^{\otimes 2}=I$ does not necessarily generates a modular subcategory in $\cC$.

\section{NIM-representations of $Fib^{\times\ell}$ and algebras in $\fib^{\boxtimes\ell}$}

In this section we study commutative, ribbon algebras in $\fib^{\boxtimes \ell}$. 
We do this by classifying NIM-representations of the Fibonacci fusion rule $Fib$ and its tensor powers $Fib^{\boxtimes \ell}$. 

We encode NIM-representations by certain types of oriented graphs. 
Nodes correspond to elements of a NIM-set $M$. 
Edges are colored in $\ell$ colours. 
Two nodes $m$ and $n$ are the source and the target of an $i$-th coloured edge respectively iff the multiplicity $(x*m,n)$ of $n$ in $x_i*m$ is non-zero. 
Here $x_i=1\otimes...\otimes 1\otimes X\otimes 1\otimes...\otimes 1$, where $X$ is in the $i$-th component.

\subsection{NIM-representation of $Fib^{\boxtimes \ell}$}

We begin by analyzing NIM-representations of $Fib$. Here we have only one colour for the edges.

\begin{lemma}\label{nimfib}
Up to isomorphism there is only one connected NIM-graph for the Fibonacci fusion rule
$$\xymatrix{m\ar@{-}[rr] && n \ar@(ur,dr) }\quad\,\,\, .$$
\end{lemma}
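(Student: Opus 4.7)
The plan is to translate the combinatorial problem into a question about a single non-negative integer matrix $N$, namely the matrix of the action of the generator $x$ on $\mathbb{Z}M$, and then to exploit the quadratic relation $x^{2}=1+x$ together with the rigidity to pin down both the size of $M$ and the pattern of entries of $N$.

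First I would record two structural constraints. Since $x^{\ast}=x$ in the Fibonacci fusion rule (because $(x\cdot x,1)=(x^{2},1)=1$ forces $x^{\ast}=x$), the rigidity axiom $(r\cdot m,n)=(m,r^{\ast}n)$ gives $(x\cdot m,n)=(m,x\cdot n)$, so $N$ is symmetric. Second, $N$ is the adjacency matrix of the (undirected, loops allowed) NIM-graph, has entries in $\mathbb{Z}_{\geq 0}$, and satisfies $N^{2}=I+N$ coming from $x^{2}=1+x$.

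The diagonal of $N^{2}=I+N$ reads $\sum_{j}N_{ij}^{2}=N_{ii}+1$ for every vertex $i$. Splitting off the diagonal term gives
\[
N_{ii}^{2}+\sum_{j\neq i}N_{ij}^{2}=N_{ii}+1,
\]
which forces $N_{ii}\in\{0,1\}$ and in both cases $\sum_{j\neq i}N_{ij}^{2}=1$. So every vertex has exactly one neighbour (joined by a single edge), possibly carrying a loop. Hence the graph obtained from the NIM-graph by deleting loops is a disjoint union of edges, and since we are assuming connectedness, $M$ has exactly two elements $m$ and $n$ joined by one edge.

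It then remains to decide which of $m,n$ carry a loop. Writing $N=\begin{pmatrix}a&1\\1&b\end{pmatrix}$ with $a,b\in\{0,1\}$, the off-diagonal relation $(N^{2})_{mn}=N_{mn}$ becomes $a+b=1$, so exactly one of $m,n$ has a loop. Up to isomorphism of NIM-graphs (i.e.\ relabelling of $M$), this is the unique graph displayed in the statement. The only step that requires any care is the case analysis of $N_{ii}$; once that is in hand the rest is immediate, so I do not anticipate a genuine obstacle.
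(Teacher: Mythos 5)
Your proof is correct and rests on the same two ingredients as the paper's own argument --- the relation $x^{2}=1+x$ and the self-duality $x^{\ast}=x$ coming from rigidity --- just packaged as the matrix identity $N^{2}=I+N$ for a symmetric non-negative integer matrix rather than as element-wise multiplicity bookkeeping. Your diagonal computation $\sum_{j}N_{ij}^{2}=N_{ii}+1$ is exactly the paper's step showing $x\ast m=(k-1)m+n$ with $k\leq 2$, and the off-diagonal relation $a+b=1$ replaces its case analysis of $x\ast n$, so the two arguments coincide in substance, with yours being the tidier rendering.
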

\begin{proof}
Let $m$ be a node of the graph of $M$. 
Write 
\begin{equation}\label{dec}
x*m = \sum_{i=1}^{k} m_{i}.
\end{equation}

\noindent An initial requisite observation is the following
$$x^{2}*m = x*(x*m) = x* \sum\limits_{i=1}^{k} m_{i} = k\cdot m + \sum\limits_{i=1}^{k} (x*m_{i} - m),$$
where $x*m_{i} - m$ is a non-negative linear combination of elements of the NIM-set $M$.

\noindent By the fusion rule $x^2=1+x$ we also have
$$x^{2}*m = x*m+ m$$
and thus
\begin{equation}x*m = (k-1)\cdot m + \sum\limits_{i=1}^{k} (xm_{i} - m) \end{equation}
Comparing this with (\ref{dec}) we see that all but one summand in (\ref{dec}) are equal to $m$ with the one remaining being say $n$. 
Thus
$$x*m = (k-1)m+n.$$
Now applying the fusion rule again we see that
$$k\cdot m +n = x*m+m = x^2*m = (k-1)x*m+x*n = (k-1)^2m+(k-1)n+x*n$$
or
$$(k-(k-1)^2)m+(2-k)n = x*n.$$
If $m=n$ then $x*m=(2-(k-1)^2)m$, which is in contradiction with the initial assumption $x*m=k\cdot m$.
Thus $m\not=n$ and $k$ is at most 2. 
We end up with two possibilities:
\newline
$k=1$ and $x*m=n$, or
\newline
$k=2$ and $x*m=m+n$. 

\noindent Note that in the first case 
$$x*n = x^{2}*m = (x+1)*m = xm + m = n + m.$$

\noindent While in the second case we have
$$x*n = x^{2}*m -x*m = (x+1)*m - x*m = m.$$

\noindent Thus up to the permutation $m\mapsto n$ we have only one possible indecomposable NIM-representation and one possible NIM-graph for $\fib$ fusion rule. 
\end{proof}

We now treat the general case of NIM-representations over tensor powers of $Fib$.

Let $M$ be a NIM-representation of $\fibfr^{\boxtimes \ell}$. 
It follows from lemma \ref{nimfib} that for any $m\in M$ and any $i=1,...,\ell$ the multiplicity $(x_{i}*m,m)$ can not be greater than 1. 
Define a map $\Gamma: M \to \{ 0 , 1 \}^\ell$  by $m \longmapsto \overline{m}$ where we write $\overline{m}_{i} = (x_{i}*m,m)$.
Let $<$ be the natural partial order on $\{ 0 , 1 \}^\ell$.

\begin{lemma}\label{gammalemma}
If nodes, corresponding to $n,m\in M$, are connected in the graph of the NIM representation $M$, then $\overline{m} < \overline{n}$ or $\overline{n} < \overline{m}$ in $\{ 0 , 1 \}^{\mathfrak{n}}$.
\end{lemma}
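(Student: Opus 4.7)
The plan is to first pin down the structure of each single $x_i$-action on $M$ using Lemma~\ref{nimfib}, then propagate information across colours using commutativity of $x_i$ and $x_j$ in the fusion ring $\fibfr^{\boxtimes \ell}$. Concretely, restricting the action to the subring generated by one $x_i$ makes $M$ a NIM-representation of the Fibonacci fusion rule, so by Lemma~\ref{cr} combined with Lemma~\ref{nimfib} the colour-$i$ edges partition $M$ into pairs $\{m, m^{(i)}\}$ satisfying
$$x_i \cdot m = m^{(i)},\qquad x_i \cdot m^{(i)} = m + m^{(i)},$$
where $\overline{m}_i = 0$ and $\overline{m^{(i)}}_i = 1$. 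In particular, every element has a \emph{unique} colour-$i$ neighbour.

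Next, assume $m, n \in M$ are connected by a colour-$i$ edge. By the pair description $\{\overline{m}_i, \overline{n}_i\} = \{0,1\}$; without loss of generality $\overline{m}_i = 0 < 1 = \overline{n}_i$ and $x_i \cdot m = n$. It then suffices to show $\overline{m}_j \leq \overline{n}_j$ for every $j \neq i$, for this forces $\overline{m} < \overline{n}$ in $\{0,1\}^{\ell}$.

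The key step is to rule out the configuration $\overline{m}_j = 1$ and $\overline{n}_j = 0$ using commutativity. In that case the pair description for colour $j$ gives $x_j \cdot m = m + m^{(j)}$ and $x_j \cdot n = n^{(j)}$ for unique elements $m^{(j)}, n^{(j)}$. Since $x_i x_j = x_j x_i$ as operators on $\mathbb{Z}M$, I would equate the two expressions for $x_i x_j \cdot m = x_j x_i \cdot m$:
$$x_i \cdot (m + m^{(j)}) \;=\; n + x_i \cdot m^{(j)} \qquad\text{and}\qquad x_j \cdot n \;=\; n^{(j)}.$$
The right-hand side is a single basis vector of $\mathbb{Z}M$, while the left-hand side contains $n$ plus the non-zero term $x_i \cdot m^{(j)}$ (non-zero by Lemma~\ref{nimfib}, since on each pair component the Fibonacci action has no kernel). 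Comparing coefficients in the basis $M$ yields a contradiction.

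I expect no serious obstacle: once the pair-decomposition of Lemma~\ref{nimfib} is applied separately in each colour, the whole argument reduces to a one-line commutativity computation and a multiplicity count. The only point requiring a moment of care is verifying that $x_i\cdot m^{(j)}\neq 0$, which is immediate from the pair description.
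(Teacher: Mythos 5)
Your proof is correct and follows essentially the same strategy as the paper: use Lemma \ref{nimfib} to pin down the single-colour structure (each $x_j$ acts on a two-element component) and then exploit commutativity $x_ix_j=x_jx_i$ together with a multiplicity count to force the monotonicity of $\Gamma$. The only cosmetic difference is that you apply the commutativity relation to the node without the self-loop and derive a contradiction by comparing total coefficients, whereas the paper applies it to the node with the self-loop and reads off the conclusion directly; the mathematical content is identical.
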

\begin{proof}
Suppose that $n$ and $m$ are connected by the $j$-th colored edge. 
Then we have $(x_{j}*n,n)=0$, $(x_{j}*n,m)=1$, and $(x_{j}*m,m)=(x_{j}*m,n)=1$.

\noindent In other words $x_{j}*m= m+n$ and so $n = x_{j}*m - m$.

\noindent We need to show that for any (other) $i$ that we have $\overline n_i\leq\overline m_i$. 
For each $i$ there are two possibilities, either $x_{i}*m = k$ ($\overline{m}_{i}=0$) or $x_{i}*m = k + m$ ($\overline{m}_{i}=1$). 
While it is obvious that $\overline n_i\leq\overline m_i$ in the second case, in the first case we have
$$x_{j}*k = (x_{j}x_{i})*m = (x_{i}x_{j})*m = x_{i}*n + x_{i}*m = x_{i}*n + k$$
\noindent and so $(x_{j}*k,x_{i}*n)=1$ which implies $(x_{i}*n,n)=0$ and $\overline n_i=\overline m_i$.
\end{proof}

\begin{remark} The image of $\Gamma$ is a sub-lattice in $\{ 0 , 1 \}^\ell$.\end{remark}

\begin{lemma}\label{embedlemma}
Let $\bm$ be a NIM-representation of $\fibfr^{k}$.
Let $m\in\bm$ such that $(x_{i}m,m) = (x_{i}x_{j} \ast m,m)=0$ for all $i,j = 1,...,k$.
Then the assignment $y\in\fibfr^{k} \mapsto y \ast m$ defines an embedding of NIM-representations $\fibfr^{k}\hookrightarrow\bm$.
\end{lemma}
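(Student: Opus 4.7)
Index the basis of $\fibfr^{k}$ by subsets $S\subseteq\{1,\dots,k\}$ via $y_S:=\prod_{i\in S}x_i$, and set $m_S:=y_S\ast m$. Since $y\mapsto y\ast m$ is automatically a $\fibfr^{k}$-module map, the content of the lemma is that each $m_S$ is a single basis element of $\bm$ and that $m_S\neq m_{S'}$ whenever $S\neq S'$. Self-duality $y_S^{\ast}=y_S$ and the ring identity $y_Sy_{S'}=\sum_{U\subseteq S\cap S'}y_{(S\triangle S')\cup U}$ together with rigidity give
\begin{equation*}
(m_S,m_{S'})=\sum_{U\subseteq S\cap S'}(y_{(S\triangle S')\cup U}\ast m,m),
\end{equation*}
so both required properties follow at once from the single claim that $(y_T\ast m,m)=0$ for every non-empty $T\subseteq\{1,\dots,k\}$.

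My plan is to prove this by induction on $|T|$; the cases $|T|=1,2$ are the hypothesis. Given the inductive hypothesis for sizes strictly less than $n$, specialising the displayed formula to $S=S'$ yields $(m_S,m_S)=1$ for every $|S|\leq n-1$, so every such $m_S$ is a single basis element of $\bm$. The factorisation $y_S=x_j\,y_{S\setminus\{j\}}$ together with $x_j^{2}=1+x_j$ then forces $x_j\ast m_S=m_{S\setminus\{j\}}+m_S$, and because Lemma \ref{nimfib} constrains the self-loop coefficient to $\{0,1\}$, we obtain $\tau_j(m_S):=(x_j\ast m_S,m_S)=1$ for every $j\in S$. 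A direct rigidity computation from the hypothesis likewise yields $\tau_j(m_{\{i_0\}})=0$ whenever $j\neq i_0$.

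For the inductive step at $|T|=n\geq 3$, split $T=\{i_0\}\sqcup T_2$ with $|T_2|=n-1$. Rigidity and the single-ness of both $m_{T_2}$ and $m_{i_0}$ reduce $(y_T\ast m,m)$ to the Gram entry $(m_{T_2},m_{i_0})$. Any $j\in T_2$ satisfies $j\neq i_0$, and by the previous paragraph $\tau_j(m_{T_2})=1\neq 0=\tau_j(m_{i_0})$; hence $m_{T_2}\neq m_{i_0}$ and the inner product vanishes, closing the induction.

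The main obstacle I anticipate is the tempting but circular attempt to establish the full ``type'' $\tau(m_S)=\chi_S$ inside the induction (in particular $\tau_j(m_S)=0$ also for $j\notin S$): verifying this vanishing at size $|S|=n-1$ would require $(y_V\ast m,m)=0$ at size $|V|=n$, which is precisely the statement the induction is meant to produce. The resolution is that separating $m_{T_2}$ from $m_{i_0}$ needs only the ``positive half'' $\tau_j(m_S)=1$ for $j\in S$, and this falls out of $x_j^{2}=1+x_j$ at each smaller size at no extra cost.
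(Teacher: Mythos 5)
Your proof is correct, and it follows the same overall skeleton as the paper's: reduce the embedding claim to the orthonormality of the $m_S$, observe via rigidity and $x_i^2=1+x_i$ that everything hinges on the single vanishing statement $(y_T\ast m,m)=0$ for non-empty $T$, and establish that by induction on the size of the support. Where you genuinely diverge is in the mechanism for the inductive step. The paper argues by contradiction: if $(x_1\cdots x_k\ast m,m)\neq 0$, then the singleton property of the smaller products forces $x_1\cdots x_{k-1}\ast m=x_k\ast m$ and $x_1\ast m=x_2\cdots x_k\ast m$; substituting one identity into the other and expanding $x_i^2=1+x_i$ makes $x_1\ast m$ strictly contain itself plus extra nonzero terms, which is absurd. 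You instead separate $m_{T_2}$ from $m_{\{i_0\}}$ by comparing their self-multiplicity vectors $\tau_j(\cdot)=(x_j\ast\cdot,\cdot)$: the factorisation $x_j\ast m_S=m_{S\setminus\{j\}}+m_S$ pins $\tau_j(m_S)=1$ for $j\in S$, while the hypothesis pins $\tau_j(m_{\{i_0\}})=0$ for $j\neq i_0$. This invariant is exactly the map $\Gamma$ that the paper introduces only afterwards, for Lemma \ref{gammalemma} and Theorem \ref{descriptiontheorem}; using it here gives a slightly cleaner and more uniform inductive step (no nested substitution, and the two cases $k=2$ and $k\geq 3$ need no separate handling), at the mild cost of front-loading the observation, justified by Lemma \ref{nimfib}, that self-multiplicities lie in $\{0,1\}$. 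Your closing remark about the circularity of trying to establish the full type $\tau(m_S)=\chi_S$ within the induction is well taken, and your resolution — that only the "positive half" $\tau_j(m_S)=1$ for $j\in S$ is needed — is exactly right.
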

\begin{proof}
We are required to show that  $y\in\fibfr^{k} \mapsto y \ast m$ is an embedding,  i.e. for $\epsilon ,\eta\in\{ 0,1 \}^{k}$
\begin{equation}\label{eqn39}(x_{1}^{\epsilon_{1}}...x_{k}^{\epsilon_{k}} \ast m,x_{1}^{\eta_{1}}...x_{k}^{\eta_{k}} \ast m) = \delta_{\epsilon_{1}\eta_{1}}...\delta_{\epsilon_{k}\eta_{k}}.\end{equation}
Assume (by induction) that this is true for any $\epsilon$ and $\eta$ with supports (sets of indexes of non-zero coordinates) in some proper subset of $[k] = \{ 1...k \}$.
We start by proving $(x_{1}...x_{k} \ast m,m) = 0$.

\noindent If $(x_{1}...x_{k} \ast m,m) \neq 0$ then by the assumption that $x_{1}...x_{k-1} \ast m$ and $x_{k} \ast m$ are simple we have
$$(x_{1}...x_{k-1} \ast m,x_{1}...x_{k-1} \ast m) = 1$$
and
$$(x_{k} \ast m,x_{k} \ast m) = 1$$
hence
$$(x_{1}...x_{k} \ast m,m) = (x_{1}...x_{k-1} \ast m,m)$$
should be equal to $1$, or $x_{1}...x_{k-1} \ast m = x_{k} \ast m$.
Similarly $x_{1} = x_{2}...x_{k}\ast m$.

\noindent But then 
\begin{eqnarray*}
x_{1}\ast m & = &  x_{2}...x_{k}\ast m \\
& = & (x_{2}...x_{k-1})\ast(x_{2}...x_{k-1}\ast m) \\
& = & x_{1}(x_{2} +1)...(x_{k-1}+1)\ast m \\
& = & x_{1}\ast m + x_{1}x_{2}\ast m + ... + x_{1}x_{k-1}\ast m + ...
\end{eqnarray*}
which contradicts, for example, that $(x_{1}x_{2}\ast m, x_{1}x_{2}\ast m)=1$.

\noindent Thus $(x_{1}^{\epsilon_{1}}...x_{k}^{\epsilon_{k}}\ast m , m) = 0$ for any non-zero $\epsilon\in \{ 0,1 \}^{k}$.
So we have
$$(x_{1}^{\epsilon_{1}}...x_{k}^{\epsilon_{k}} \ast m,x_{1}^{\eta_{1}}...x_{k}^{\eta_{k}} \ast m) = (m,m) \sum\limits_{i=1}^{k} \delta_{\epsilon_{i}\eta_{i}} + \textrm{ sum of } (x_{1}^{\xi_{1}}...x_{k}^{\xi_{k}}\ast m,m)$$
for non-zero $\xi_{1}...\xi_{k}$.
This proves (\ref{eqn39}).
That the image of $y\in\fibfr^{k} \mapsto y\ast m$ is a NIM-subrepresentation is obvious (see section \ref{nimrepsec}).
\end{proof}

\begin{theorem}\label{descriptiontheorem}
Any indecomposable NIM-representation of $\fibfr^{\ell}$ is of the form $\fibfr^{\lambda}$ for some set theoretic partition $\lambda$ of $[\ell] = \{ 1...\ell \}$.
\end{theorem}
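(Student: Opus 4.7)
The plan is to exhibit a ``source'' node $m_0 \in M$, read off a partition $\lambda$ of $[\ell]$ from the local behaviour of the generators $x_i$ at $m_0$, and then apply Lemma \ref{embedlemma} to the quotient fusion rule $\fibfr^{\ell} \twoheadrightarrow \fibfr^{|\lambda|}$ that identifies $x_i$ and $x_j$ whenever $i$ and $j$ lie in the same block of $\lambda$.

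First I would locate an element $m_0 \in M$ with $\overline{m_0} = (0, \ldots, 0)$ by descent. If $\overline{m}_i = 1$ for some $i$, then the $x_i$-neighbour $n$ of $m$ (the unique element with $\overline{n}_i = 0$ appearing in $x_i * m$) is adjacent to $m$ and so, by Lemma \ref{gammalemma}, satisfies $\overline{n} < \overline{m}$ strictly in the finite poset $\{0,1\}^{\ell}$. Iterating this descent must terminate at a node $m_0$ of type $(0,\ldots,0)$. Having fixed such an $m_0$, I would then define a partition $\lambda = \{B_1, \ldots, B_k\}$ of $[\ell]$ by declaring $i \sim j$ iff $x_i * m_0 = x_j * m_0$, and choose a representative $i_s \in B_s$ from each block.

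The technical heart of the argument is to upgrade the relation $i \sim j$ from the pointwise identity at $m_0$ to the global identity $x_i * m = x_j * m$ for every $m \in M$. For this I would first check that $m_0$ generates $\mathbb{Z}M$ as a $\mathbb{Z}\fibfr^{\ell}$-module: by Lemma \ref{cr} and indecomposability, every $m \in M$ is reachable from $m_0$ through a sequence of $x_i$-adjacencies, and a short induction on path length places each such $m$ in the submodule generated by $m_0$ (the inductive step reads $m = x_i * m'$ or $m = x_i * m' - m'$ according to whether $\overline{m'}_i = 0$ or $1$). Since the generators of $\fibfr^{\ell}$ commute, the kernel of the action of $x_i - x_j$ on $\mathbb{Z}M$ is a $\mathbb{Z}\fibfr^{\ell}$-submodule; it contains $m_0$ for $i \sim j$, hence the whole of $\mathbb{Z}M$, so $x_i$ and $x_j$ indeed agree elementwise on $M$. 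Consequently the $\fibfr^{\ell}$-action on $M$ factors through $\fibfr^{\ell} \twoheadrightarrow \fibfr^k$, and with the induced $\fibfr^k$-action the element $m_0$ meets the hypotheses of Lemma \ref{embedlemma}: $(x_{i_s} * m_0, m_0) = \overline{m_0}_{i_s} = 0$, and for $s \neq t$ one has $(x_{i_s} x_{i_t} * m_0, m_0) = (x_{i_s} * m_0, x_{i_t} * m_0) = 0$, because $x_{i_s} * m_0$ and $x_{i_t} * m_0$ are distinct elements of $M$ by the definition of $\sim$. Lemma \ref{embedlemma} then yields an embedding of $\fibfr^k$-NIM-representations $\fibfr^k \hookrightarrow M$, which by Lemma \ref{cr} and indecomposability of $M$ must be an equality. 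Pulling this back along $\fibfr^{\ell} \twoheadrightarrow \fibfr^k$ identifies $M$ with $\fibfr^{\lambda}$.

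The main obstacle I anticipate is the middle step, namely promoting the pointwise identity $x_i * m_0 = x_j * m_0$ to the global identity $x_i = x_j$ on all of $M$. This is where indecomposability, complete reducibility for NIM-representations (Lemma \ref{cr}) and the commutativity of the generators $x_i$ must all be brought to bear simultaneously, and some care is required in passing between the NIM-set $M$ of distinguished basis elements and the ambient free abelian group $\mathbb{Z}M$.
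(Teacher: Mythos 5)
Your proof is correct and follows essentially the same route as the paper's: locate a $\Gamma$-minimal node $m_0$ via Lemma \ref{gammalemma}, define $\lambda$ by $i\sim j$ iff $x_i*m_0=x_j*m_0$, invoke Lemma \ref{embedlemma} to embed $\fibfr^k$, and finish with Lemma \ref{cr} plus indecomposability. The only difference is that you spell out the step the paper dismisses as "obviously extends" (globalizing $x_i*m_0=x_j*m_0$ to all of $M$ via commutativity and the fact that $m_0$ generates $\mathbb{Z}M$), which is a worthwhile clarification rather than a deviation.
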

\begin{proof}
Let $M$ be an indecomposable NIM-representation of $\fibfr^{\ell}$.
Our first step is to use lemma \ref{gammalemma} to show that there exists a $m\in M$ such that 
$$(x_{i} \ast m ,m) = 0\quad\forall i = 1,...,\ell.$$
Indeed, let $m$ be an element with a minimal (with respect to the partial order on $\{ 0,1 \}^{\ell}$) $\Gamma (m)$.
If $\Gamma (m)\neq 0$, then $\exists i$ such that $\Gamma (m)_{i} = (x_{i}\ast m,m)=1$ and then
$$((x_{i}-1)\ast m , (x_{i}-1)\ast m) = ((x_{i}-1)^{2}\ast m,m) = 2(m,m) - (x_{i}\ast m,m) = 1.$$
Thus $(x_{i}-1)\ast m\in M$.
By lemma \ref{gammalemma},  $\Gamma ((x_{i}-1)\ast m) < \Gamma (m)$ which contradicts the assumption.
Hence $\Gamma (m) = 0$ or $(x_{i}\ast m,m) = 0$ $\forall i = 1,...,\ell$.
This in particular implies that 
$$(x_{i}\ast m , x_{i}\ast m) = (x_{i}^{2}\ast m , m) = (x_{i}\ast m , m) + (m,m) = 1$$
and so $x_{i}\ast m \in M$.

Now define a set-theoretic partition $\lambda$ of $[\ell]$ by putting $i,j\in [\ell]$ in a given partition if and only if $x_{i}\ast m = x_{j}\ast m$.
Let $k$ be the number of parts of $\lambda$.
By permuting elements of $[\ell]$ we can assume that $x_{1},...,x_{k}$ lie in different parts of $\lambda$.
By lemma \ref{embedlemma} the assignment $y\mapsto y\ast m$ defines an injective map $\fibfr^{k} \hookrightarrow M$ of NIM-representations of $\fibfr^{k}$.
This obviously extends to the injective map $\fibfr^{\ell} \hookrightarrow M$ of NIM-representations of $\fibfr^{\ell}$.
By lemma \ref{cr} and the indecomposability of $M$, this map must be iso. 
\end{proof}

\begin{example}NIM-representations of $\fib^{\boxtimes 2}$
\end{example}
We have two colours in this case, which we depict by a solid line and a dashed line.
We have two set-theoretical partitions of $[2]$. The first $\{1\}\cup\{2\}=[2]$ corresponds to the square

$$
\xymatrix{
&& \cdot \ar@{.}[ddrr] \ar@{-}@(u,r) && \\
&&&& \\
\cdot \ar@{-}[uurr] \ar@{.}[ddrr] &&&& \cdot \ar@{-}@(u,r) \ar@{.}@(d,r) \\
&&&& \\
&& \cdot \ar@{-}[uurr] \ar@{.}@(d,r) &&
}
$$
\newline

while the second, $\{1,2\}=[2]$, corresponds to the double interval
\newline

$$\renewcommand{\objectstyle}{\labelstyle}
\xymatrix{
\cdot \ar@{-}@<+0.5ex>[rr] \ar@<-0.5ex>@{.}[rr] && \cdot \ar@{-}@(ul,ur) \ar@{.}@(ur,dr)
}$$
\newline

\begin{example}NIM-representations of $\fib^{\boxtimes 3}$
\end{example}
We have three colours in this case, which we depict by a solid line, a broken line and a dashed line.
We have four set-theoretical partitions of $[3]$. 
An example is $\{1\}\cup\{2\}\cup\{3\}=[3]$ which corresponds to the cube
\newline

$$
\xymatrix{
&& \cdot \ar@{.}[ddrr] \ar@{-}@(ul,ur) \ar@{--}[rr] && \cdot \ar@{.}[ddrr] \ar@{-}@(ul,ur) \ar@{--}@(ur,dr) &&&  \\
&&&& & \\
\cdot \ar@{-}[uurr] \ar@{.}[ddrr] \ar@{--}[rr] && \cdot \ar@{-}[uurr] \ar@{.}[ddrr] \ar@{--}@(ur,dr) && \cdot \ar@{-}@(u,r) \ar@{.}@(d,r) \ar@{--}[rr] && \cdot \ar@{-}@(ul,ur) \ar@{--}@(ur,dr) \ar@{.}@(dl,dr) & \\
&&&&& \\
&& \cdot \ar@{-}[uurr] \ar@{.}@(dl,dr) \ar@{--}[rr] && \cdot \ar@{-}[uurr] \ar@{--}@(ur,dr) \ar@{.}@(dl,dr) &&&
}
$$
\newline
\newline
\newline

Although we do not classify all possible module categories of $\fib^{\boxtimes\ell}$, the description of their NIM-representations (obtained in theorem \ref{descriptiontheorem}) is enough to prove that there are no non-trivial ribbon algebras in $\fib^{\boxtimes\ell}$ which we establish in the next section.

\subsection{Commutative algebras in $\fib^{\boxtimes\ell}$}
Here we look at commutative, ribbon algebras in products of Fibonacci modular categories.

\begin{theorem}\label{noalg}
There are no ribbon algebras in $\fib_u^{\boxtimes\ell}$.
\end{theorem}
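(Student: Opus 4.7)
The plan is to suppose that $A$ is a non-trivial, separable, commutative ribbon algebra in $\fib_u^{\boxtimes\ell}$ and derive a contradiction by combining the classification of NIM-representations in Theorem \ref{descriptiontheorem} with the ribbon constraint $\theta_A=1_A$. Any direct summand of such an $A$ inherits commutativity, separability, and triviality of the twist (the latter via naturality of $\theta$ applied to the projection onto the summand), so I may reduce to the case where $A$ is indecomposable. Semisimplicity of $\fib_u^{\boxtimes\ell}$ together with separability then forces, via Maschke's lemma, the module category $\bm:=(\fib_u^{\boxtimes\ell})_A$ to be semisimple, and indecomposability of $A$ makes $\bm$ indecomposable as a $\fib_u^{\boxtimes\ell}$-module category. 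Hence $Irr(\bm)$ is an indecomposable NIM-representation of $\fibfr^\ell$ and, by Theorem \ref{descriptiontheorem}, is isomorphic to $\fibfr^\lambda$ for some set-theoretic partition $\lambda$ of $[\ell]$ with $k=|\lambda|$ blocks.

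Next I would translate this NIM-data into the simple decomposition of $A$. The regular free $A$-module $A=I\otimes A$ corresponds to the unit element $1\in\fibfr^\lambda=\fibfr^k$, and $x_i\in\fibfr^\ell$ acts through the canonical ring surjection $\fibfr^\ell\twoheadrightarrow\fibfr^k$ sending $x_i$ to the generator $y_{[i]}$ of the block $[i]$ containing $i$. Applying the Frobenius-reciprocity adjunction between the free and forgetful functors, the multiplicity of a simple $X^\epsilon:=X^{\epsilon_1}\boxtimes\cdots\boxtimes X^{\epsilon_\ell}$ (with $X^0=I$, $X^1=X$) in $A$ equals the multiplicity of $1$ in the product $x_1^{\epsilon_1}\cdots x_\ell^{\epsilon_\ell}\cdot 1=\prod_{j=1}^k y_j^{c_j(\epsilon)}$, where $c_j(\epsilon)$ counts the coordinates of $\epsilon$ lying in the $j$-th block of $\lambda$. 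Expanding via the Fibonacci identity $y^c=F_{c-1}+F_c\,y$ (with $F_0=0$, $F_1=1$), this multiplicity equals $\prod_{c_j(\epsilon)\geq 1}F_{c_j(\epsilon)-1}$, hence is nonzero precisely when no $c_j(\epsilon)$ equals $1$.

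The ribbon condition then produces the contradiction. The twist acts on $X^\epsilon$ by $\rho^{|\epsilon|}=u^{-2|\epsilon|}$; since $u$ is a primitive tenth root of unity, $u^{-2}$ is a primitive fifth root of unity, so $\theta_A=1_A$ forces $5\mid|\epsilon|$ for every $\epsilon$ with nonzero multiplicity in $A$. If some block of $\lambda$ had size at least $2$, I would take $\epsilon$ supported on exactly two coordinates inside that block: the multiplicity is then $F_1=1\neq 0$, while $|\epsilon|=2$ is not divisible by $5$, contradiction. So every block of $\lambda$ must be a singleton, $\lambda$ is the discrete partition, and then $c_j(\epsilon)=\epsilon_j\in\{0,1\}$ forces the only nonvanishing $\epsilon$ to be the zero vector; this gives $A\cong I$, contradicting non-triviality. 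The main obstacle I anticipate is the translation step itself, namely identifying $[A]$ with the unit $1\in\fibfr^\lambda$ and converting Hom-space multiplicities into the Fibonacci product above; once that bookkeeping is in place, the single twist inequality $u^{-4}\neq 1$ closes the argument at once.
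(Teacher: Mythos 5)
Your proof is correct and rests on the same three pillars as the paper's: the classification of indecomposable NIM-representations (Theorem \ref{descriptiontheorem}), the adjunction between the free and forgetful functors to read off the class of $A$ in $K_0(\fib_u^{\boxtimes\ell})$, and the twist $\theta_X=u^{-2}$ being a primitive fifth root of unity. Where you genuinely diverge is in the treatment of a general partition $\lambda$: the paper first works out the one-block case $\lambda=(\ell)$ explicitly and then handles general $\lambda$ by factoring the module category as $\fibfr^{(\ell_1)}\boxtimes\cdots\boxtimes\fibfr^{(\ell_s)}$ and asserting $A=\boxtimes_j A_j$ with each $A_j$ non-ribbon, whereas you compute the multiplicity of $X^{\epsilon}$ in $A$ uniformly as $\prod_j F_{c_j(\epsilon)-1}$ and exhibit a single offending summand with $|\epsilon|=2$. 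Your route buys two things: it makes precise the step the paper dismisses with ``obviously cannot be ribbon'' (the $|\epsilon|=2$ summand has multiplicity $F_1=1$ and twist $u^{-4}\neq 1$), and it avoids the somewhat loose claim that the algebra realizing a product NIM-representation is itself a box product. One small point you share with the paper rather than improve on: both arguments tacitly identify $g(1)=[A]$ with the distinguished base point of $\fibfr^{\lambda}$ (the unique element $m$ with $(x_i\ast m,m)=0$ for all $i$); this is harmless, since if $[A]$ contained any $x_i$ the twist obstruction $u^{-2}\neq 1$ would already kill ribbonness, but it deserves a sentence.
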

\begin{proof}
Let $A$ be an indecomposable algebra in $\bc = \fib_u^{\boxtimes\ell}$.
Then $\bc_{A}$ is an indecomposable $\bc$-module category (see section \ref{algandmods}) of $A$-modules in $\bc$.
As was noted in section \ref{algandmods} the forgetful functor $F:\bc_{A}\to\bc$ (forgetting the module structure) is a functor of module categories over $\bc$ and has the right adjoint $G:\bc\to\bc_{A}$ which is again a $\bc$-module functor.
Note that the adjoint sends the monoidal unit $I$ to $A$ as a module over itself.
Hence for the NIM-representation $M$ of $\bc_{A}$ we have maps of NIM-representations 
$$f:M\to\fibfr^{\boxtimes\ell}\,\, \textrm{ and }\,\, g:\fibfr^{\boxtimes\ell}\to M$$
which are adjoint, i.e.
$$(g(y),m)_{M} = (g,f(m))_{\fibfr^{\boxtimes\ell}}.$$

\noindent Since $\bc_{A}$ is indecomposable as a $\bc$-module category, so is its fusion rule $M$.
According to theorem \ref{descriptiontheorem} we should have $M \simeq \fibfr^{\lambda}$ for some set-theoretic partition $\lambda$ of $[\ell]$.

\noindent Assume that $\lambda$ has only one part $\lambda = (\ell)$.
In particular $M=\fibfr^{(\ell)}$ has just two simple objects: $m$ and $n$.
Assume that $m=g(1)$.
Since $g$ is a map of NIM-representations we have $g(x_{i})=n$ for all $i=1,...,\ell$ such that 
$$g(x_{i}\ast 1) = x_{i} g(1) = x_{i}\ast m =n.$$
Hence for an arbitrary element $x_{i_{1}}...x_{i_{s}}$ of $\fibfr^{\boxtimes\ell}$
$$g(x_{i_{1}}...x_{i_{s}}\ast 1) = f_{s}\ast n + f_{s-1}\ast m$$
where $f_{s}$ is the $s$-th Fibonacci number.
Thus the adjoint map $f$ has the form
\begin{eqnarray}\label{f}
f(m) & = & 1 + \sum\limits_{s=1}^{\ell} f_{s-1}\ast\sum\limits_{i_{1}<...<i_{s}}x_{i_{1}}...x_{i_{s}} \\
f(n) & = & \sum\limits_{s=1}^{\ell} f_{s}\ast \sum\limits_{i_{1}<...<i_{s}}x_{i_{1}}...x_{i_{s}}
\end{eqnarray}
Obviously $f(m)$, which is the class of the algebra $A$ in $K_{0}(\fib_u^{\boxtimes\ell})=\mathbb{Z}[\fibfr^{\boxtimes\ell}]$, cannot be ribbon.

\noindent Assume now that $\lambda$ is a set-theoretic partition of $[\ell]$ into ordered parts
$$\lambda = [1...\ell_{1}][\ell_{1}...\ell_{2}]...[\ell_{n-1}...\ell_{n}].$$
In this case according to theorem \ref{descriptiontheorem}, $M$ as a NIM-representation of $\fib_u^{\boxtimes\ell} = \fib_u^{\boxtimes\ell_{1}}\boxtimes ...\boxtimes\fib_u^{\boxtimes\ell_{s}}$ has the form
$$M = \fibfr^{(\ell_{1})}\boxtimes ... \boxtimes \fibfr^{(\ell_{s})}$$
By the above $\mathbb{Z}[\fibfr^{(\ell_{j})}] = K_0((\fib_u^{\boxtimes\ell_{i}})_{A})$ for some non-ribbon algebra $A_{i}\in\fib_u^{\boxtimes\ell_{i}}$.
Then $\mathbb{Z}M = K_0(\boxtimes^{s}_{j=1} (\fib_u^{\boxtimes\ell_{i}})_{A_{i}}) = K_0((\fib_u^{\boxtimes\ell})_{A})$ where $A=\boxtimes^{s}_{j=1}A_{i}$.
Since $A_{i}$ are non-ribbon then so is $A$.
\newline
The case for general $\lambda$ can be reduced to the above by a permutation of $[n]$.
\end{proof}
\begin{remark}
Note that the product $\fib_u\boxtimes\fib_{u^{-1}}$ has a commutative ribbon algebra, whose class in $K_0(\fib_u\boxtimes\fib_{u^{-1}}) = Fib^{\otimes 2}$ is $1 + x_1x_2$ (see section \ref{coal}). The corresponding NIM-representation is $Fib^{(2)}$. 
\newline
At the same time the argument of the proof of theorem \ref{noalg} works well for $\fib_u^{\boxtimes\ell}\boxtimes\fib_v^{\boxtimes m}$ as long as $uv\not= 1$, thus proving that there are no commutative ribbon algebras in $\fib_u^{\boxtimes\ell}\boxtimes\fib_v^{\boxtimes m}$ for such $u,v$. 
\end{remark}

\section{Applications}

Here we look at vertex operator algebras (=chiral algebras) whose categories of modules are tensor products of Fibonacci categories.

\subsection{Rational vertex operator algebras} \label{vex}

We start by recalling basic facts about rational vertex operator algebras and their representations. 
 
Let $\V$ be a rational vertex operator algebra (or VOA for short) of central charge $c$; that is, a vertex algebra satisfying conditions 1-3 from \cite[Section~ 1]{Hu}. 

To each $\V$-module $\M$ one can associate its {\em character} 
$$\chi_\M(q) = tr_\M(q^{L_0-\frac{c}{24}}).$$
Here $tr_\M$ is the trace of an operator on a vector space $\M$, $L_0$ is the zero Virasoro mode and $c$ is the central charge of $\V$. 
The action of $L_0$ on $\M$ is diagonalisable so that
$$\M = \bigoplus_{l}\M_l,\quad\quad \M_l = \{m\in\M,\ L_0(m) = lm\}.$$
Moreover if $\M$ is irreducible then 
$$\M = \bigoplus_{n\geq0}\M_{h+n}$$ 
for some rational number $h=h_M$, the {\em conformal weight} of $\M$. 
Thus the character of an irreducible $\M$ can be written as
$$\chi_\M(q) = q^{h-\frac{c}{24}}\sum_{n\geq 0}dim(\M_{h+n})q^n.$$

The (complex) span of characters of a rational VOA is a module over the modular group $SL_2(\mathbb{Z})$ with respect to the action
$$S(\chi)(\tau) = \chi(-\frac{1}{\tau}),\quad\quad T(\chi) = \chi(\tau+1).$$
Here we make the change of variable $q = e^{2\pi i\tau}$. 

It is proved in \cite{Hu} that the category $\Rep(V)$ of $V$-modules of finite length has the natural structure of a modular tensor category. 
We will denote the fusion of $V-$modules $M$ and $N$ by $M*N$.

The relation between the central charge $c_V$ of a unitary rational VOA $V$ and the central charge of the category of its modules $\Rep(V)$ is given by (e.g. see \cite{ms,re}): $$\xi(\Rep(V)) = e^{\frac{2\pi ic_V}{8}}.$$ In the non-unitary case only the square of the above identity is true. The ribbon twist on an irreducible $V$-module $M$ is related to its conformal weight $h_M$ as follows (see \cite{Hu}):  
$$\theta_M = e^{2\pi ih_M}I_M.$$

Note that a rational vertex algebra has to be {\em simple} (i.e. has no non-trivial ideals). 
This, in particular, means that VOA maps between rational vertex algebras are monomorphisms.

The category of modules $\Rep(\V\otimes \U)$ of the tensor product of two (rational) vertex algebras is ribbon equivalent to the tensor product $\Rep(\V)\boxtimes\Rep(\U)$ of the categories of modules (see, for example \cite{fhl}). Sometimes following physics tradition we will write $\V\times \U$ for the tensor product  $\V\otimes \U$ of VOAs. For $\V$-module $M$ and $\U$-module $N$ the $\V\otimes \U$-module with underlying vector space $M\otimes N$ will be denoted by $M\boxtimes N$.

Now consider an extension of vertex operator algebras $V\subset W$, where $V$ is a rational vertex operator algebra of $W$. Then $W$ viewed as a $V$-module decomposes into a finite direct sum of irreducible $V$-modules.
Moreover $W$ considered as an object $A\in \Rep(V)$ has a natural structure of simple, separable, commutative, ribbon algebra, see \cite[Theorem 5.2]{KiO}.
The converse is also true if the conformal weights of the irreducible components of the underlying $V$-representation of the algebra are all positive. In particular it is always true if $V$ is a unitary VOA. 

\medskip
{\bf Holomorphic VOAs}
\newline
A VOA is called {\em holomorphic} if it has only one irreducible module, namely itself. It is known (see e.g. \cite{sch,dlm}) that the central charge of a holomorphic VOA is divisible by 8. 

\medskip
{\bf Diagonal extensions}
\newline
Suppose that $\V$ is a VOA whose category of representations has a form $\cC\boxtimes\overline\cC$ for some modular category $\cC$. The diagonal algebra $Z\in \cC\boxtimes\overline\cC$ (from section \ref{coal}) defines a holomorphic extension of $\V$, which is automatically a VOA in the unitary case. 

\medskip
{\bf Simple current extensions}
\newline
Recall that a module over a VOA is called {\em simple current} if it is invertible.
Let $S$ be a subgroup of the group of simple currents of a rational VOA such that conformal weights of elements of $S$ are positive integer (in the non-unitary case we should also assume that the braiding $c_{s,s}=1$ for any $s\in S$). The object $\oplus_{s\in S}s$ of the category $\Rep(\V)$ has a structure of simple, separable, commutative, ribbon algebra  and the corresponding VOA extension of $\V$ is called the {\em simple current} extension (see e.g. \cite{fss}).

\medskip
{\bf Conformal embeddings}
\newline
Let $\bigoplus_i\g^i \subset \g'$ be an embedding (here $\g^i$ and $\g'$ are finite dimensional simple Lie algebras). 
We will symbolically write $\oplus_i(\g^i)_{k_i}\subset \g'_{k'}$ if the restriction of a $\hat \g'-$module of level $k'$ to $\hat \g^i$ has level $k_i$ (in this case the numbers $k_i$ are multiples of $k'$). 
Such an embedding defines an embedding of vertex algebras $\otimes_iV(\g^i,k_i)\subset V(\g',k')$; but in general this embedding does not preserve the Virasoro element. 
In the case when it does, the embedding $\oplus_i(\g^i)_{k_i}\subset \g'_{k'}$ is called {\em conformal}, see \cite{bb, SW, KW}.

\medskip
{\bf Cosets}
\newline
Let $\U\subseteq \V$ be an embedding of rational vertex algebras which does not preserve conformal vectors $\omega_\U,\omega_\V$ (only operator products are preserved). 
The {\em centralizer} $C_\V(\U)$ is a vertex algebra with the conformal vector $\omega_\V-\omega_\U$ (see \cite{fz}). 
Note that the tensor product $\U\otimes C_\V(\U)$ is mapped naturally to $\V$ and this map is a map of vertex operator algebras. 
In the case when $\V,\U$ and $C_\V(\U)$ are rational this map is an embedding (by simplicity of $\U\otimes C_\V(\U)$) and $\V$ is a commutative algebra in the category of modules
$$\Rep(\U)\boxtimes\Rep(C_\V(\U))\simeq \Rep(\U\otimes C_\V(\U)).$$ 

The coset construction allows us to describe VOAs with a given completely anisotropic category of representations.
\begin{proposition}
Let $\V$ be a rational VOA with a completely anisotropic category of representations $Rep(\V)$. Then any rational VOA whose category of representations is $\overline{Rep(\V)}$ is a coset $\U/\V$ of some holomorphic $\U$. 
\end{proposition}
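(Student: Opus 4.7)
My plan is to realize $\W$ as a coset of a holomorphic VOA $\U$ containing $\V$. First, I would form the product $\V\otimes\W$, which is rational with representation category $\Rep(\V)\boxtimes\overline{\Rep(\V)}$. In this category the diagonal algebra $Z=\bigoplus_{X\in Irr(\Rep(\V))}X\boxtimes X^{*}$ is Lagrangian (trivialising): it is simple, separable, commutative, and ribbon (the twist on $X\boxtimes X^{*}$ equals $\theta_X\theta_X^{-1}=1$). Granting non-negativity of the conformal weights of the simple summands of $Z$ (automatic in the unitary case, as recalled in Section \ref{vex}), the algebra--extension correspondence of Theorem 5.2 of \cite{KiO} yields a VOA extension $\V\otimes\W\subset\U$ with $\Rep(\U)\simeq(\Rep(\V)\boxtimes\overline{\Rep(\V)})_{Z}^{loc}\simeq\Vect$, so $\U$ is holomorphic.

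Inside $\U$, the subalgebras $\V$ and $\W$ commute (they sit in different tensor factors), hence $\W\subseteq C_{\U}(\V)=:\W'$. The embedding $\V\otimes\W'\hookrightarrow\U$ with $\U$ holomorphic realises $\U$ as a Lagrangian extension of $\V\otimes\W'$. By the general correspondence between Lagrangian algebras and modular dualities (see \cite{dmno}), the existence of a Lagrangian algebra in $\Rep(\V)\boxtimes\Rep(\W')$ forces $\Rep(\W')\simeq\overline{\Rep(\V)}$; in particular $\W'$ is rational with the expected modular representation category.

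It remains to show $\W=\W'$. The inclusion $\W\subseteq\W'$ is an extension of rational VOAs whose modular representation categories both identify with $\overline{\Rep(\V)}$, so this extension corresponds to a simple separable commutative ribbon algebra in $\Rep(\W)\simeq\overline{\Rep(\V)}$. Since complete anisotropy is invariant under braiding-reversal (algebras in $\cC$ and in $\overline{\cC}$ are in natural bijection), $\overline{\Rep(\V)}$ is completely anisotropic, so this algebra must be the unit. Hence $\W=\W'=C_{\U}(\V)$, exhibiting $\W$ as the coset $\U/\V$.

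The main obstacle is the very first step: verifying that the algebra-to-VOA correspondence applies, i.e.\ that the conformal weights of the summands $X\boxtimes X^{*}$ of $Z$ are non-negative integers. Integrality follows from the ribbon condition $\theta_{X\boxtimes X^{*}}=1$, but non-negativity is automatic only in the unitary setting and needs additional care in general. A secondary subtlety is the invocation, in paragraph two, of $\Rep(C_{\U}(\V))\simeq\overline{\Rep(\V)}$ for holomorphic $\U$; this is a standard consequence of the Lagrangian-algebra dictionary from \cite{dmno}, but its VOA-theoretic justification (rationality of the coset, in particular) relies on external input.
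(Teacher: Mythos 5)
Your proposal follows essentially the same route as the paper: pass to $\V\otimes\W$, use the trivialising (diagonal) algebra in $\Rep(\V)\boxtimes\overline{\Rep(\V)}$ to produce a holomorphic extension $\U$, and then use complete anisotropy (noting it is preserved under braiding-reversal) to identify $\W$ with the coset $C_\U(\V)=\U/\V$. Your extra care about non-negativity of conformal weights and rationality of the coset flags real subtleties that the paper's own one-paragraph proof also leaves implicit, so the argument is correct at the same level of rigor as the original.
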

\begin{proof}
Let $\W$ be a VOA such that $Rep(\W) = \overline{Rep(\V)}$. Then $$Rep(\V\otimes \W) = Rep(\V)\boxtimes\overline{Rep(\V)}$$ contains a commutative ribbon separable indecomposable algebra with trivial category of local modules. Thus the corresponding VOA extension $\U$ of $\V\otimes\W$ is holomorphic. The coset $\U/\V$ is an extension of $\W$ and by complete anisotropy of $\Rep(\W)$ must coincide with $\W$. 
\end{proof}

\medskip
{\bf Lattice VOAs} 
\newline
Let $L$ be a positive definite integral even lattice of rank d, $(\ ,\ )$ the pairing. Its 
associated vertex operator algebra $\V_L$ is rational and unitary and has central charge d. Its irreducible representations 
are in one-to-one correspondence with the cosets in $L^\# /L$, where $L^\#$ is the dual lattice 
of $L$ \cite{do}.  
Fusion rule are given by the addition in the finite abelian group $L^\# /L$.

\medskip
{\bf Affine VOAs} 
\newline
Let $\g$ be a finite dimensional simple Lie algebra and let $\hat \g$ be the corresponding affine Lie algebra. For a positive integer $k$ let  $\g_k = V(\g,k)$ be the simple vertex operator algebra associated with the vacuum $\hat \g-$module of level $k$ \cite{fz}. This VOA is rational of central charge 
\begin{equation} \label{cformula}
c(\g_k)=\frac{k\dim \g}{k+h^\vee},
\end{equation}
where $h^\vee$ is the dual Coxeter number of the Lie algebra $\g$.

In particular the central charge of $A_{1,k}$ is $\frac{3k}{k+2}$. 
The irreducible $V(A_1,k)$-modules are irreducible highest weight modules $L_{A_{1,k}}(i)$ with $i=0,...,k$.
The conformal weight of $L_{A_1}(k,i)$ is $\frac{i(i+2)}{4(k+2)}$. 
The fusion rule is given by
$$L_{A_{1,k}}(i)*L_{A_{1,k}}(j) = \left\{\begin{array}{cc}\bigoplus\displaylimits^{min(i,j)}_{s=0} L_{A_{1,k}}(i+j-2s), & i+j<k\\ \\ \bigoplus\displaylimits^{min(i,j)}_{s=i+j-k} L_{A_{1,k}}(i+j-2s), & i+j\geq k\end{array}\right.$$
In particular $L_{A_{1,k}}(k)$ is invertible (a {\em simple current}): 
$$L_{A_{1,k}}(k)*L_{A_{1,k}}(k) = L_{A_{1,k}}(0).$$ Its conformal weight is $k/4$. 
Its action on other irreducible modules is 
$$L_{A_{1,k}}(k)*L_{A_{1,k}}(i) = L_{A_{1,k}}(k-i).$$
\newline

\medskip
{\bf Minimal models}
\newline
Let $1<p<q$ be a coprime integers. Following  \cite{bpz,fms,wa} consider the minimal (Virasoro) VOA $M(p,q)=\Vir_{c_{p,q}}$ of 
central charge 
$$c_{p,q} = 1-6\frac{(p-q)^2}{pq}.$$
Irreducible representations of $M(p,q)$ are irreducible Virasoro highest weight modules $L_{Vir_{c_{p,q}}}(h_{(r,s)})=L_{p,q}(r,s)$ with conformal weights 
$$h_{(r,s)} = \frac{(qr-ps)^2-(p-q)^2}{4pq}$$
labelled by 
$$(r,s),\quad 1\leq r\leq p-1,\quad 1\leq s\leq q-1$$ modulo identification $(p-r,q-s) = (r,s)$.

Fusion rules have the following compact presentation:
$$L_{p,q}(r,s)L_{p,q}(r',s') = \osum\displaylimits_{i=1+|r-r'|}^{min(r+r'-1,2p-r-r'-1)}\ \ \osum\displaylimits_{j=1+|s-s'|}^{min(s+s'-1,2q-s-s'-1)}\ L_{p,q}(i,j),$$ where the primed summation indicates that $i$ and $j$ increment in twos.

Note that for $p>2$ the minimal model $M(p,q)$ has a simple current $L_{p,q}(p-1,1)$ of conformal weight $\frac{(p-2)(q-2)}{4}$. Its fusion is
$$L_{p,q}(p-1,1)L_{p,q}(r,s) = L_{p,q}(p-r,s).$$ In particular $L_{p,q}(p-1,1)$ generates a cyclic group of order 2.

\subsection{Fibonacci chiral algebras}

Here we look at examples where the Fibonacci categories are categories of representations of rational vertex operator algebras (VOAs of {\em Fibonacci} type). In particular we present all four modular categories $\fib_u$ as categories of representations of rational VOAs. 

Note that the formula 
\begin{equation}\label{cch}
u^{-1} = \xi(\fib_u)^2 = e^{\frac{\pi ic_V}{2}}
\end{equation}
allows us to determine the structure of a modular category for the category of representations of a Fibonacci VOA. The relation $\xi(\fib_u)^4 = u^{-2} = \theta_X$ implies that for a Fibonacci VOA $\V$ 
$$\frac{c_\V}{2}\equiv h\ \ (1),$$ where $h$ is the conformal weight of the non-trivial irreducible representation of $\V$. 

\begin{example}({Affine VOA} $G_{2,1}$)

The central charge of $G_{2,1}$ is $14/5$. 
The conformal weights of irreducible $G_{2,1}$-modules are $0$ and $2/5$. 
\newline
Since the fusion rule of  $G_{2,1}$ is of Fibonacci type, the value of the central charge (or conformal weights) for  $G_{2,1}$ implies that
$$\Rep(G_{2,1})\cong \fib_{e^{\frac{3\pi i}{5}}}.$$
The conformal embedding $A_{1,28}\subset G_{2,1}$ allows us to write two irreducible $G_{2,1}$-modules as sums of irreducible $A_{1,28}$-modules:
$$L_{G_{2,1}}(0) = L_{A_{1,28}}(0)\oplus L_{A_{1,28}}(10)\oplus L_{A_{1,28}}(18)\oplus L_{A_{1,28}}(28),$$ $$L_{G_{2,1}}(1) = L_{A_{1,28}}(6)\oplus L_{A_{1,28}}(12)\oplus L_{A_{1,28}}(16)\oplus L_{A_{1,28}}(22).$$

Another conformal embedding $A_{1,3}\times A_{1,1}\subset G_{2,1}$ allows us to identify $G_{2,1}$ with the simple current extension of $A_{1,3}\times A_{1,1}$. 
Indeed, the $A_{1,3}\times A_{1,1}$-module $L_{A_{1,3}}(3)\boxtimes L_{A_{1,1}}(1)$ is invertible (simple current) and has conformal weights 1. 
The commutative algebra 
$$A = L_{A_{1,3}}(0)\boxtimes L_{A_{1,1}}(0)\oplus L_{A_{1,3}}(3)\boxtimes L_{A_{1,1}}(1)$$ 
(the simple current extension) in $\Rep(A_{1,3}\times A_{1,1}) = \Rep(A_{1,3})\boxtimes \Rep(A_{1,1})$ coincides with $G_{2,1}$. 
In particular, the non-trivial irreducible local $A$-module (the irreducible $\V$-module) has the following decomposition into irreducible $A_{1,3}\times A_{1,1}$-modules:
$$L_{A_{1,3}}(2)\boxtimes L_{A_{1,1}}(0)\oplus L_{A_{1,3}}(1)\boxtimes L_{A_{1,1}}(1).$$

\end{example}

\begin{example}({Affine VOA} $F_{4,1}$)

The central charge of $F_{4,1}$ is $26/5$. 
The conformal weights of irreducible $F_{4,1}$-modules are $0$ and $3/5$. 
\newline
The fusion rule of  $F_{4,1}$ is of Fibonacci type. Thus by the formula (\ref{cch}) 
$$\Rep(F_{4,1})\cong \fib_{e^{\frac{7\pi i}{5}}}.$$
\newline
The conformal embedding $A_{2,2}\times A_{2,1}\subset F_{4,1}$ allows us to identify $F_{4,1}$ with the simple current extension of $A_{2,2}\times A_{2,1}$.
\newline
The conformal embedding $G_{2,1}\times F_{4,1}\subset E_{8,1}$ allows us to identify $F_{4,1}$ with the centraliser (coset) of $G_{2,1}$ in $E_{8,1}$:
$$F_{4,1} = \frac{E_{8,1}}{G_{2,1}}.$$ 
Note that the converse is also true: $G_{2,1}$ is the centraliser (coset) of $F_{4,1}$ in $E_{8,1}$.

\end{example}

\begin{example}({Simple current extension of} $A_{1,8}$)

The irreducible $A_{1,8}$-module $L_{A_{1,8}}(8)$ is invertible (simple current) and has conformal weight 1. 
The commutative algebra $A = L_{A_{1,8}}(0)\oplus L_{A_{1,8}}(8)$ (the simple current extension) is maximal in $\Rep(A_{1,8})$ and $\Rep(A_{1,8})_A^{loc}$ has the type $\fib^{\boxtimes 2}$. Thus $\V = L_{A_{1,8}}(0)\oplus L_{A_{1,8}}(8)$ has a structure of VOA (a VOA extension of $A_{1,8}$) such that $\Rep(\V)$ is equivalent to $\fib^{\boxtimes 2}$ as a monoidal category. 

Indeed it follows from the $A_{1,8}$-fusion rule that there are five non-isomorphic induced $A$-modules 
$$A\otimes L_{A_{1,8}}(0) = A\otimes L_{A_{1,8}}(8),\quad A\otimes L_{A_{1,8}}(1) = A\otimes L_{A_{1,8}}(7),$$ $$A\otimes L_{A_{1,8}}(2) = A\otimes L_{A_{1,8}}(6),\quad A\otimes L_{A_{1,8}}(3) = A\otimes L_{A_{1,8}}(5),\quad A\otimes L_{A_{1,8}}(4)$$
and all but the last one are irreducible as $A$-modules. 
The last one is a sum of two (non-isomorphic) $A$-modules.
By looking at conformal weights it can be seen that the $A$-modules induced from $L_{A_{1,8}}(1)$ and $L_{A_{1,8}}(3)$ are non-local and the rest is local.
Thus irreducible local $A$-modules (the irreducible $\V$-modules) have the following decompositions into irreducible $A_{1,8}$-modules:
$$L_{A_{1,8}}(0)\oplus L_{A_{1,8}}(8),\quad L_{A_{1,8}}(2)\oplus L_{A_{1,8}}(6),\quad L_{A_{1,8}}(4),\quad L_{A_{1,8}}(4).$$

The central charge of $\V$ coincides with the central charge of $A_{1,8}$ and is equal to $12/5$. 
The conformal weights of irreducible $\V$-modules are $0$, $1/5$ and $3/5$ (the last one appearing twice). 
This in particular shows that although 
$$\Rep(V)\cong \fib_{e^{\frac{7\pi i}{5}}}\boxtimes\fib_{e^{\frac{7\pi i}{5}}}.$$
$\V$ is not a tensor product of two VOAs of type $\fib$ (in that case the conformal weight of the second irreducible module would be twice the conformal weight of the last two irreducibles). 

Conformal embeddings allow us to represent $\V$ as cosets. The conformal embedding $A_{1,8}\times G_{2,1}\times G_{2,1}\subset E_{8,1}$ factors through 
$$A_{1,8}\times G_{2,1}\times G_{2,1}\subset\V\times G_{2,1}\times G_{2,1}\subset E_{8,1}$$
and gives the following coset presentation for $\V$:
$$\V = \frac{E_{8,1}}{G_{2,1}\times G_{2,1}}.$$

The conformal embedding $A_{1,8}\times G_{2,1}\subset F_{4,1}$ factors through 
$$A_{1,8}\times G_{2,1}\subset\V\times G_{2,1}\subset F_{4,1}$$ 
and gives another coset presentation for $\V$:
$$\V = \frac{F_{4,1}}{G_{2,1}}.$$
\end{example}

Note that although the chiral algebras
$\frac{F_{4,1}}{G_{2,1}}\times E_{8,1}$ and $F_{4,1}\times F_{4,1}$  (here $E_{8,1}$ is the holomorphic chiral algebra of central charge 8) have equivalent categories of representations and the same central charge they are not isomorphic. The way to see it is to compare their degree 1 components.

\begin{example}({Minimal VOA} $M(2,5) = \Vir_{-\frac{22}{5}}$)

The minimal VOA with the central charge $c_{2,5} = -22/5$.
The irreducible modules $L_{2,5}(1,1)$ and $L_{2,5}(1,2)$ have conformal weights $0$ and $-1/5$. 

The fusion rule of  $M(2,5)$ is of Fibonacci type. Thus
$$\Rep(M(2,5))\cong \fib_{e^{\frac{\pi i}{5}}}.$$

Being minimal and maximal at the same time $M(2,5)=\Vir_{-\frac{22}{5}}$ is the only VOA of central charge $-22/5$ (which contains $\Vir_{-\frac{22}{5}}$).
\end{example}

\begin{example}({Simple current extension of minimal VOA} $M(3,10) = \Vir_{-\frac{44}{5}}$)

The minimal VOA $M(3,10)$ has central charge $c_{3,10} = -44/5$. Moreover we have an embedding
$$\Vir_{-\frac{44}{5}}\subset \Vir_{-\frac{22}{5}}\otimes \Vir_{-\frac{22}{5}}$$
Note that $\Vir_{-\frac{22}{5}}\otimes \Vir_{-\frac{22}{5}}$ is a simple current extension of $\Vir_{-\frac{44}{5}}$. Indeed its decomposition as an $\Vir_{-\frac{44}{5}}$-module is
$L_{3,10}(1,1)\oplus L_{3,10}(2,1).$
\end{example}

\begin{example}{Maximal extension of $M(3,5)^{\times 8}\times M(2,5)^{\times 7}$}

The central charge of the minimal model $M(3,5)$ is $-3/5$. The irreducible representations are 
$$1=L_{3,5}(1,1), \quad x=L_{3,5}(2,1),\quad y=L_{3,5}(1,2),\quad z=L_{3,5}(2,2).$$
Their conformal weights are 
$$h_1=0,\quad h_x=\frac{3}{4},\quad h_y=-\frac{1}{20},\quad h_z=\frac{1}{5}.$$
Fusion rules of $M(3,5)$ have the form:
$$
\begin{array}{c|cccc} 
\times & x &y & z \\  \hline  x & 1    \\ y & z & 1+z  \\ z & y & x+y & 1+z \end{array}$$

Thus the category $Rep(M(3,5))$ is a product of a Fibonacci category and a pointed category with the $\Z/2\Z$-fusion rule. The values of conformal weights of representations of $M(3,5)$ imply that
$$Rep(M(3,5)) \cong \fib_{e^{\frac{9\pi i}{5}}}\boxtimes \cC(\Z/2\Z,q,\theta),$$ with $\theta(1) = -i$. In particular the product $M(2,5)\times M(3,5)$ has an extension 
$$\V = L_{2,5}(1,1)\boxtimes L_{3,5}(1,1)\oplus L_{2,5}(1,2)\boxtimes L_{3,5}(2,2).$$ By looking at the characters it can be seen that although $\V$ is non-negatively graded $\V = \oplus_{n\geq 0}\V_n$, the degree zero component $\V_0$ is not one-dimensional. 
The representation category of $\V$ is $Rep(\V) = \cC(\Z/2\Z,q,\theta)$. The 8-th power $\cC(\Z/2\Z,q,\theta)$ has a trivialising algebra. This implies that the 8-th power $\V^{\times 8}$ has a holomorphic extension (of central charge $-40$). Similarly the category $\cC = Rep(M(3,5)^{\times 8}\times M(2,5)^{\times 7})$ has a maximal algebra $A$ such that the category of local modules $\cC_A^{loc}$ is $\fib_{e^{\frac{9\pi i}{5}}}$. The corresponding maximal extension $\U$ of $M(3,5)^{\times 8}\times M(2,5)^{\times 7}$ is a non-negatively graded VOA (although with $dim(\U_0)>1$) of central charge $-178/5$.

\end{example}

\appendix


\begin{thebibliography}{000}

\bibitem{bb}
A.~Bais, P.~Bouwknegt, \textit{A classification of subgroup truncations of the bosonic string}.  
Nuclear Physics B, \textbf{279} (1987), 561-570. 
%

\bibitem{BaKi}B. Bakalov, A. Kirillov Jr. Lectures on tensor categories and modular functors. University Lecture Series, 21. American
Mathematical Society, Providence, RI, 2001. 221 pp. 

\bibitem{bpz}
A Belavin, A Polyakov, and A Zamolodchikov. \textit{Infinite Conformal Symmetry in Two-Dimensional Quantum 
Field Theory}. Nucl. Phys. B, \textbf{241}:333-380, 1984.

\bibitem{dmno}
A.~Davydov, M.~M\"uger, D.~Nikshych, V.~Ostrik, \textit{Witt group of modular 
categories}, arXiv:1009.2117.

\bibitem{do}
C.~Dong, \textit{Vertex algebras associated to even lattice}, J. Algebra. \textbf{161} (1993), 245-265.

\bibitem{fms}
P Di Francesco, P Mathieu, and D S\' en\' echal. Conformal Field Theory. Graduate Texts in Contemporary Physics. 
Springer-Verlag, New York, 1997.

\bibitem{DGNO} V.~Drinfeld, S.~Gelaki, D.~Nikshych, and V.~Ostrik,
\textit{On braided fusion categories I}, Selecta Math.\ \textbf{16} (2010), 1-119.

\bibitem{dlm} C. Dong, H. Li and G. Mason, \textit{Modular invariance of trace functions in orbifold 
theory and generalized moonshine}, Comm. Math. Phys. \ \textbf{214} (2000), 1-56.

\bibitem{ffrs}
J. Fr\"ohlich, J. Fuchs, I. Runkel, C. Schweigert, \textit{Correspondences of ribbon categories}. Adv. Math. \textbf{199} (2006), no. 1,
192--329.

\bibitem{fhl}
I.~Frenkel, Y.~Huang, J.~Lepowsky, \textit{On axiomatic approaches to vertex operator algebras and modules}, Mem. Amer. Math.
Soc. \textbf{104} (1993).

\bibitem{fss}
J.~Fuchs, A.N.~Schellekens and C.~Schweigert, \textit{A matrix s for all simple current extensions},. Nucl. Phys. B \textbf{473} (1996) 323

\bibitem{fz}
I.~Frenkel, Y.~Zhu, \textit{Vertex operator algebras associated to affine and Virasoro algebras}. Duke Math J. \textbf{66} (1992), 123.

\bibitem{Hu}
Y.-Z.~Huang,  
\textit{Rigidity and modularity of vertex tensor categories}, 
Commun.\ Contemp.\ Math.\ \textbf{10} (2008), suppl. 1, 871--911.


\bibitem{HuL} Y.-Z.~Huang, J.~Lepowski,
\textit{A theory of tensor products for module categories for a vertex operator algebra. I, II.},  
Selecta Math. (N.S.)  \textbf{1}  (1995),  no. 4, 699--756, 757--786.

\bibitem{js2}A.~Joyal, R.~Street, \textit{Tortile Yang-Baxter operators in tensor categories}, J. Pure Appl. Algebra\ \textbf{71} (1991), 43-51.

\bibitem{js1}A.~Joyal, R.~Street, \textit{The geometry of tensor calculus, I}.  Adv. Math. \ \textbf{88}  (1991),  no. 1, 55--112. 

\bibitem{js}A.~Joyal, R.~Street, \textit{Braided tensor categories}. Adv. Math. \ \textbf{102} (1993), no. 1, 20--78. 

\bibitem{mkgj}G.~Janelidze, G.~M.~Kelly, \textit{A note on actions of a monoidal category}. Theory Appl. Categ. \ \textbf{9} (2001/2), 61-91. 

\bibitem{KiO} A.~Kirillov Jr., V.~Ostrik,
\textit{On $q$-analog of McKay correspondence and ADE classification of
$\hat{\mathfrak{s}\mathfrak{l}}_2$  conformal field theories},
Adv.\ Math.\ {\bf 171} (2002), no. 2, 183--227.

\bibitem{KW}  V.~Kac, M.~Wakimoto, 
\textit{Modular and conformal invariance constraints in representation theory of affine algebras}.
Adv.\ in Math\ \textbf{70} (1988), no. 2, 156--236. 

\bibitem{ks}
I. G. Koh, P. Sorba, \textit{Fusion rules and (sub)-modular invariant partition functions in non-unitary theories}. 
Physics Letters B, \ \textbf{215}, Issue 4, (1988), 723-729.

\bibitem{mc} P.~McCrudden, \textit{Categories of Representations of Coalgebroids},
Advances in Mathematics\ \textbf{154} (2000), 299-332. 

\bibitem{mu} M.~M\"uger,
\textit{From subfactors to categories and topology. II.
The quantum double of tensor categories and subfactors},
J.\ Pure Appl.\ Algebra  \textbf{180}  (2003),  no. 1-2, 159--219.

\bibitem{mu1}
M. M\"uger, \textit{On the structure of modular categories}, Proc. Lond. Math. Soc.,\ \textbf{87} (2003), 
291-308.

\bibitem{ms}
G.~Moore, N.~Seiberg, \textit{Lectures on RCFT}. Superstrings 89 (Trieste, 1989), 1--129, World Sci. Publ.,1990.

\bibitem{pa}B.~Pareigis, \textit{On braiding and dyslexia}. J. Algebra \ \textbf{171} (1995), no. 2, 413--425. 

\bibitem{qu} 
D. Quillen,  High Algebraic K-theory I. in Algebraic K-theory I (1973) 
85-147, Lecture Notes in Math. v. 341, Springer-Verlag. 

\bibitem{rc}
A. Rocha-Caridi, in Vertex Operators in Mathematics and Physics, ed. J.Lepowsky, 
S.Mandelstam and I.M.Singer (Springer Berlin 1985)

\bibitem{rsw}R. Rosebrugh, N. Sabadini, R. Walters, \textit{Generic commutative separable algebras and cospans of graphs}. Theory Appl.
Categ. 15 (2005/06), No. 6, 164--177. 

\bibitem{re}
K.-H.~Rehren, \textit{Braid group statistics and their superselection rules}. In: D. Kastler (ed.): The algebraic theory of
superselection sectors. Introduction and recent results. World Scientific, 1990.

\bibitem{sch}
A.~Schellekens, \textit{Meromorphic $c = 24$ Conformal Field Theories}, Comm. 
Math. Phys. \textbf{153} (1993), 159. 

\bibitem{SW} A.~Schellekens,  N.~Warner, 
\textit{Conformal subalgebras of Kac-Moody algebras},
Phys.\ Rev.\ D \textbf{34} (1986), no.\ 10, 3092--3096.

\bibitem{tu}V.~Turaev, Quantum invariants of knots and 3-manifolds. de Gruyter Studies in Mathematics, 18. Walter de Gruyter,
Berlin, 1994. 588 pp.

\bibitem{wa}
W. Wang, \textit{Rationality of Virasoro vertex operator algebras}, Int. Math. Res. 
Not. \textbf{71} (1993) 197-211.

\end{thebibliography}
\end{document}